\documentclass[11pt]{article}
\usepackage{amsmath,amsfonts,amsthm,amssymb,mathrsfs,amsthm,fullpage}
\usepackage[dvips]{hyperref}
\allowdisplaybreaks
\makeatletter

\@addtoreset{equation}{section}
\makeatother

\newtheorem{theo}{Theorem.}[section]

\newtheorem{lemm}[theo]{Lemma.}

\title{Kernel identities for van Diejen's $q$-difference operators and 
transformation formulas for multiple basic hypergeometric series }
\author{Yasuho Masuda \footnote{Email address: myasuho@math.kobe-u.ac.jp}
}
\date{Department of Mathematics, Kobe University}
\begin{document}
\maketitle
\begin{abstract}
In this paper, we show that the kernel function of
Cauchy type for type $BC$
intertwines the commuting family of van Diejen's $q$-difference operators.
This result gives rise to a transformation formula for
certain multiple basic hypergeometric series of type $BC$. 
We also construct a new infinite family of commuting $q$-difference operators for which the Koornwinder polynomials
are joint eigenfunctions.
\end{abstract}
\tableofcontents

\section{Introduction}
\label{intro}
In the theory of Macdonald polynomials of type $A$,
the kernel function of Cauchy type has been used to derive various important properties of 
Macdonald polynomials \cite{KiN,Ma,Mi}.
Kajihara's Euler transformation formula for multiple basic hypergeometric series can also be
regarded as an application of the kernel function of Cauchy type \cite{Ka}.

Recently, Y. Komori, M. Noumi and J. Shiraishi in \cite{KNS} 
introduced the kernel function $\Phi(x;y |q, t)$ of type $BC$ in the variables $x = (x_1, \ldots, x_m)$ and $y=(y_1, \ldots, y_n)$
relevant to Koornwinder polynomials.
The kernel function $\Phi(x;y |q, t)$
satisfies the following $q$-difference equation:
\begin{equation}
\langle t \rangle D_1^x \Phi(x;y| q, t) - \langle t \rangle \widetilde{D}_1^y \Phi(x;y| q, t)
= \langle t^m \rangle \langle t^{-n} \rangle \langle \alpha^2 t^{m-n-1} \rangle \Phi(x; y | q, t) ,
\label{eq:4.1 of KNS}
\end{equation}
where $\alpha = \sqrt{abcd/q}$ and $\langle z \rangle$ is the multiplicative notation for trigonometric function
\begin{equation}
\langle z \rangle = z^{\frac{1}{2}} - z^{-\frac{1}{2}}=-z^{-\frac{1}{2}}(1-z).
\end{equation}
In this identity, $D^x_1$ is the Koornwinder $q$-difference operator in the $x$ variables 
\begin{align}
&D^x_1 = \sum_{i=1}^m A_i (x) (T_{q, x_i} -1) + 
\sum_{i=1}^m A_i (x^{-1}) (T^{-1}_{q, x_i} -1) , \\
&A_i(x) = \frac{\langle a x_i \rangle \langle bx_i \rangle \langle c x_i \rangle \langle d x_i \rangle}
{\langle x_i^2 \rangle \langle qx_i^2 \rangle}
\prod_{\substack{1 \le j \le m \\ j \neq i}} \frac{\langle t x_i x_j \rangle \langle t x_i x_j^{-1} \rangle}
{\langle x_i x_j \rangle \langle  x_i x_j^{-1}\rangle}, \\
&T_{q, x_i}  f(x_1, \ldots, x_i, \ldots, x_m) = f(x_1, \ldots, q x_i, \ldots, x_m),
\end{align}
and $\widetilde{D}^y_1$ denotes the Koornwinder operator in the 
$y$ variables with the parameters $(a, b, c, d)$ replaced by
$(\sqrt{tq}/a, \sqrt{tq}/b, \sqrt{tq}/c, \sqrt{tq}/d)$.
In this paper, we show that 
$\Phi(x; y|q, t)$ intertwines the whole commuting family of 
van Diejen's $q$-difference operators, which includes the Koornwinder operator as the first member.

In Subsection \ref{subsec:van Diejen}, 
we recall some basic facts on van Diejen's $q$-difference operators.
We also state our main result in Subsection \ref{subsec:Main result}
and prove it in Subsection \ref{subsec:Proof of main result}.
In the proof of the main result, we show a rational function identity 
of $x$ variables and $y$ variables.
By using a method of principal specialization,
from this identity we derive two types of transformation formulas for multiple $q$-series (Theorem \ref{Theo:trans form BC} and Theorem \ref{Theo:trans form C}) in Section \ref{sec:Transformation}.
Theorem \ref{Theo:trans form C} recovers one of the $C$ type
transformation formulas, due to H. Rosengren \cite{R1}.
In Section \ref{sec:New family},
from the special case of Theorem \ref{Theo:trans form BC}, 
we construct a family of explicit $q$-difference operators of 
``row type'' for which the Koornwinder polynomials are the eigenfunctions.

Throughout the present paper, we assume that the base
$q$ is a complex number such that $0 < |q| < 1$.
We also assume that $a, b, c, d, q, t$ are generic complex numbers. 
We use the notation of fractional powers
$a^{\kappa}b^\lambda\cdots x^\mu \cdots$
of multiplicative variables $a, b, \ldots x, \ldots$
as an conventional notation 
$\exp (\kappa A + \lambda B + \cdots \mu x + \cdots)$
under a parametrization
$a=\exp(A),  b=\exp(B),  \ldots,  x= \exp(X),  \ldots$
by additive variables $A, B, \ldots, X, \ldots$.

\section{Kernel identity of Cauchy type}
\label{sec:Kernel identity}
\subsection{Van Diejen's $q$-difference operators}
\label{subsec:van Diejen}
In this subsection, we recall some basic properties of the family of van Diejen's $q$-difference operators.
For further details, we refer the reader to \cite{Diejen,Diejen1,Diejen2,KNS,Ko}.

The family of van Diejen's $q$-difference operators $\{D_r^x(a, b, c, d|q, t)\}^m_{r=0}$ in the variables 
$x$ 
is defined as follows:
\begin{align}
&D_r^x :=D_r^x(a, b, c, d|q, t) = \sum_{\substack{I \subset \{ 1, \ldots , m\} \\
0 \le |I| \le r \\
\epsilon_i = \pm1 (i \in I)}}  V_{\epsilon I, I^c} (x) U_{I^c, r-|I|} (x)T_{q, x}^{(I, \epsilon)} , \\
&V_{\epsilon I, J}(x) =   \prod_{i \in I} 
\frac{\langle a x_i^{\epsilon_i}, b x_i^{\epsilon_i}, c x_i^{\epsilon_i}, d x_i^{\epsilon_i} \rangle}
{\langle x_i^{2\epsilon_i}, qx_i^{2\epsilon_i} \rangle} 
\prod_{\substack{i, j \in I\\ i < j}} 
\frac{\langle t x_i^{\epsilon_i} x_{j}^{\epsilon_{j}}, tq x_i^{\epsilon_i} x_{j}^{\epsilon_{j}} \rangle}
{\langle x_i^{\epsilon_i}x_{j}^{\epsilon_{j}}, qx_i^{\epsilon_i}x_{j}^{\epsilon_{j}} \rangle}
\prod_{\substack{i \in I\\ j \in J}}
\frac{\langle t x_i^{\epsilon_i} x_{j}^{\pm 1}\rangle}{\langle x_i^{\epsilon_i}x_{j}^{\pm 1} \rangle}, \\
&U_{J, r}(x) = \sum_{\substack{ I \subset J \\ |I|=r \\ \delta_i = \pm1 (i \in I)}} (-1)^r
\prod_{i \in I} \frac{\langle a x_i^{\delta_i}, b x_i^{\delta_i}, c x_i^{\delta_i}, d x_i^{\delta_i} \rangle}
{\langle x_i^{2\delta_i}, qx_i^{2\delta_i} \rangle} 
\prod_{\substack{i, j \in I\\ i < j}}  
\frac{\langle t x_i^{\delta_i} x_{j}^{\delta_{j}}, q x_i^{\delta_i} x_{j}^{\delta_{j}}/t \rangle}
{\langle x_i^{\delta_i}x_{j}^{\delta_{j}}, qx_i^{\delta_i}x_{j}^{\delta_{j}} \rangle}
\prod_{\substack{i \in I\\ j \in J\backslash I}} 
\frac{\langle t x_i^{\delta_i} x_{j}^{\pm 1}\rangle}{\langle x_i^{\delta_i}x_{j}^{\pm 1} \rangle}, 
\end{align}
where $I^c = \{1, \ldots, m\} \setminus I$ and $T_{q, x}^{(I, \epsilon)} = \prod_{i \in I} T_{q, x_i}^{\epsilon_i}$.
We also used the shorthand notation
\begin{align}
\langle z_1, \ldots, z_k \rangle  = \langle z_1 \rangle  \cdots \langle z_k \rangle, \quad 
\langle zw^{\pm 1} \rangle  = \langle zw, zw^{-1} \rangle. 
\end{align}
We will use the following notation of $q$-shifted factorial in this paper:
\begin{align}
\langle z \rangle_{q, l}=
\prod_{i=1}^l \langle q^{i-1} z \rangle = (-1)^l q^{-\frac{1}{2} 
\binom{l}{2}}z^{-\frac{l}{2}} (z; q)_l \quad (l=0, 1, 2, \ldots),
\end{align}
where $(z;q)_l = \prod_{i=1}^l (1-q^{i-1}z)$.
For these two types of $q$-shifted factorials, we use the shorthand notation as
\begin{align}
\langle z_1, \ldots, z_k \rangle_{q, l}  = 
\prod_{1 \le i \le k}\langle z_i \rangle_{q, l}, \quad 
\langle zw^{\pm 1} \rangle_{q, l}  = \langle zw\rangle_{q, l}
\langle zw^{-1}\rangle_{q, l}, \\ 
(z_1, \ldots, z_k ; q)_{l}  = \prod_{1 \le i \le k}(z_i; q)_{l}, \quad 
(zw^{\pm 1};q)_{l}  = (zw;q)_l (zw^{-1};q)_l.  
\end{align}
Let $w(z)$ and $v(z)$ denote the following rational functions, respectively:
\begin{align}
w(z) = \frac{\langle az, bz, cz, dz \rangle}{\langle z^2, qz^2 \rangle}, \quad
v(z) = \frac{\langle tz \rangle}{\langle z \rangle}.
\end{align}
Then $V_{\epsilon I, J}(x), U_{J, r}(x)$ are also expressed as
\begin{align}
V_{\epsilon I, J}(x) &=   \prod_{i \in I} w(x_i^{\epsilon_i})
\prod_{\substack{i, j \in I\\ i < j}} v( x_{i}^{\epsilon_i} x_{j}^{\epsilon_{j}} ) v( qx_{i}^{\epsilon_i} x_{j}^{\epsilon_{j}} )
\prod_{\substack{i \in I\\ j \in J}} v( x_{i}^{\epsilon_i} x_{j}^{\pm1}), \\
U_{J, r}(x) &= \sum_{\substack{ I \subset J \\ |I|=r \\ \delta_i = \pm1 (i \in I)}} (-1)^r
\prod_{i \in I} w(x_i^{\delta_i})
\prod_{\substack{i, j \in I\\ i < j}}  
v( x_{i}^{\delta_i} x_{j}^{\delta_{j}} ) v( q^{-1} x_{i}^{-\delta_i} x_{j}^{-\delta_{j}} )
\prod_{\substack{i \in I\\ j \in J\backslash I}}
v( x_{i}^{\delta_i} x_{j}^{\pm1}),
\end{align}
where $v(x_i x_j^{\pm1})$ means $v(x_i x_j) \cdot v(x_i x_j^{-1})$.

Let $W_m$ be the Weyl group of type $BC_m$
acting on the Laurent polynomials in the variables $x=(x_1, \ldots, x_m)$
through the permutations of the indices and the inversions of the variables.
Under the assumption that $a, b, c, d, q, t$ are generic,
for each partition $\lambda=(\lambda_1, \ldots, \lambda_m)$
there exists a unique $W_m$-invariant Laurent polynomial 
$P_\lambda(x) = P_\lambda(x;a,b,c,d|q,t)$, called the Koornwinder polynomial attached to $\lambda$,
satisfying the following conditions \cite{Diejen1}.

(1)\ $P_\lambda(x)$ is expanded by the orbit sums $m_\mu(x) =  \sum_{\nu \in W. \mu} x^{\nu}$ as
\begin{equation}
P_\lambda (x) = m_\lambda(x) + \sum_{\mu < \lambda} c_{\lambda \mu} m_{\mu} (x),
\end{equation}
where $c_{\lambda \mu} \in \mathbb{C}$ and $<$ means the dominance ordering of the partitions.

(2)\ $P_\lambda(x)$ is a joint eigenfunction of van Diejen's $q$-difference operators $D_r^x$:
\begin{equation}
D^x_r P_\lambda(x) = P_\lambda(x) 
 e_r(\alpha  t^{\rho_m} q^\lambda ; \alpha|t),
\end{equation}
where $\rho_m =(m-1, \ldots, 1, 0)$ and $e_r(x; \alpha|t)$ are the interpolation polynomials of column type
defined by
\begin{align}
e_r(x; \alpha| t) &= \sum_{1 \le i_1 < \cdots < i_r \le m} e(x_{i_1} ; t^{i_1 -1} \alpha)
e(x_{i_2} ; t^{i_2 -2} \alpha) \cdots e(x_{i_r} ; t^{i_r-r} \alpha) \\
&= \sum_{1 \le i_1 < \cdots < i_r \le m} e(x_{i_1} ; t^{m-i_1 -r+1} \alpha)  
e(x_{i_2} ; t^{m-i_2 -r+2} \alpha) \cdots e(x_{i_r} ; t^{m-i_r} \alpha) , \\
e(z; w) &= \langle zw \rangle \langle zw^{-1} \rangle = z +z^{-1} -w-w^{-1}.
\end{align}
Note that $e_r(x; \alpha|t)$ is $W_m$-invariant
and satisfies the following interpolation property (See \cite{KNS}):
For any partition $\mu \not\supset (1^r) $,
\begin{equation}
e_r(\alpha t^{\rho_m} q^{\mu} ; \alpha|t) = 0.
\end{equation}

\subsection{Main result}
\label{subsec:Main result}
We recall the definition of the kernel function $\Phi(x; y|q, t)$ of Cauchy type associated with the root systems of type $BC$ 
in the variables $x= (x_1, \ldots, x_m)$ and $y=(y_1, \ldots,y_n)$.
The kernel function $\Phi(x;y |q, t)$ is defined as a
solution of the following linear $q$-difference equations:
\begin{align}
T_{q, x_i} \Phi(x; y|q,t) &=  \Phi(x;y|q, t) 
\prod_{\substack{1 \le l \le n}} 
\frac{e(\sqrt{q/t}x_i; y_l)}
{e(\sqrt{tq}x_i; y_l)} \quad (1 \le i \le m),\\
T_{q, y_k} \Phi(x; y|q, t) &= \Phi(x;y|q, t) 
\prod_{\substack{1 \le j \le m}} 
\frac{e(\sqrt{q/t}y_k; x_j)}
{e(\sqrt{tq} y_k; x_j)}\quad (1 \le k \le n).
\end{align}
Such a $\Phi(x; y|q, t)$ is a multiple of the function
\begin{equation}\label{eq:def of kernel}
\Phi_0(x;y|q, t) =(x_1 \cdots x_m)^{n \gamma}
 \prod_{\substack{1 \le i \le m \\ 1 \le k \le n}} 
 \frac{(\sqrt{tq}x_i y_k^{\pm1}; q)_{\infty}}
 {(\sqrt{q/t}x_i y_k^{\pm1} ; q)_{\infty}},
\end{equation}
by a $q$-periodic function with respect to all the variables $x$ and $y$.
Here $(z; q)_{\infty} = \prod_{i=0}^\infty (1-q^{i}z)$ and
$\gamma$ is a complex number such that $q^\gamma = t$.
We note that four types of 
explicit formulas for kernel function of Cauchy type including $\Phi_0(x;y|q, t)$ are introduced in \cite{KNS}.

For any integer $l$, let $e(z; w)_{q, l}$ be the $q$-shifted factorial of type $BC$
with base point $w$ defined by
\begin{align}
e(z; w)_{q, l} =
\begin{cases} 
e(z; w) e(z; qw) \cdots e(z; q^{l-1}w) \quad (l \ge 0), \\
\displaystyle
\frac{1}{e(z; q^{l}w) e(z; q^{l+1} w) \cdots e(z; q^{-1}w)} \quad  (l < 0).
\end{cases}
\end{align}
We also define a generating function of $D_r^{x}$ and that of $\widetilde{D}_r^{y}$ by
\begin{align}
\mathcal{D}^x(u):=
\mathcal{D}^x(u;a, b,c,d |q, t) = \sum_{r=0}^m (-1)^r 
D_r^x e( u; \alpha)_{t, m-r}, \\
\widetilde{\mathcal{D}}^y(u):
=\mathcal{D}^y(u; \widetilde{a}, \widetilde{b},
\widetilde{c},\widetilde{d}|q, t) 
= \sum_{r=0}^n (-1)^r 
\widetilde{D}_r^y e(u; \widetilde{\alpha})_{t, n-r},
\end{align}
where $(\widetilde{a}, \widetilde{b},\widetilde{c},\widetilde{d})
=(\sqrt{tq}/a, \sqrt{tq}/b, \sqrt{tq}/c, \sqrt{tq}/d)$,
so $\widetilde{\alpha}=t/\alpha$.
We also denoted $D_r^y(\widetilde{a}, \widetilde{b},\widetilde{c},\widetilde{d} |q, t)$
by $\widetilde{D}_r^y$.
For any function $f(z) =f(z;a,b,c,d)$ depending on the parameters $(a, b, c, d)$,
we write $\widetilde{f}(z)=f(z;\widetilde{a}, \widetilde{b},\widetilde{c},\widetilde{d})$.
Then we have the following theorem.
\begin{theo}\label{Theo:Main}
The kernel function $\Phi(x;y |q, t)$ intertwines the $q$-difference operator $\mathcal{D}^x (u)$ 
in the $x$ variables with the $q$-difference operator
$\widetilde{\mathcal{D}}^y (u)$ in the $y$ variables:
\begin{equation}\label{eq:theo1}
\mathcal{D}^x (u) \Phi(x; y|q, t) 
= e(u; \alpha)_{t, m-n} \widetilde{\mathcal{D}}^y (u)  \Phi(x;y|q, t).
\end{equation}
We call this equation a {\em kernel identity of Cauchy type}.
\end{theo}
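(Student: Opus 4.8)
The plan is to turn the operator identity \eqref{eq:theo1} into an identity between explicit rational functions, and then to prove that rational identity.

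First I would use the first-order $q$-difference equations defining $\Phi$. Every elementary shift $T_{q,x_i}$ and $T_{q,x_i}^{-1}$ occurring in $T^{(I,\epsilon)}_{q,x}$ acts on $\Phi$ by multiplication by an explicit product of $e(\cdot\,;\cdot)$-ratios in the $y$ variables: $T_{q,x_i}\Phi = \Phi\prod_{l=1}^n e(\sqrt{q/t}\,x_i;y_l)/e(\sqrt{tq}\,x_i;y_l)$, and the inverse shift is obtained from this by $x_i\mapsto q^{-1}x_i$. Collecting these factors for a given signed subset $(I,\epsilon)$ gives $T^{(I,\epsilon)}_{q,x}\Phi=\Phi\cdot g_{I,\epsilon}(x,y)$ with $g_{I,\epsilon}$ an explicit rational function. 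Hence
\[
\frac{\mathcal{D}^x(u)\,\Phi}{\Phi}=\sum_{r=0}^m(-1)^r e(u;\alpha)_{t,m-r}\sum_{\substack{I\subset\{1,\ldots,m\}\\ 0\le|I|\le r,\ \epsilon}}V_{\epsilon I,I^c}(x)\,U_{I^c,r-|I|}(x)\,g_{I,\epsilon}(x,y),
\]
which is a rational function of $(x,y,u)$; the same computation with the parameters $(\widetilde a,\widetilde b,\widetilde c,\widetilde d)$ and the dual equations for $T_{q,y_k}$ expresses $\widetilde{\mathcal{D}}^y(u)\Phi/\Phi$ as a rational function of $(x,y,u)$ as well. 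Since $\Phi\not\equiv0$ for generic parameters, \eqref{eq:theo1} is equivalent to equating these two rational functions after multiplying the $y$-side by $e(u;\alpha)_{t,m-n}$.

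Next I would organize this rational identity by its dependence on the spectral variable $u$, which enters only through $s=u+u^{-1}$ inside the factors $e(u;\cdot)_{t,l}$. Both sides then have the same divisor in $s$: the left side is built from $e(u;\alpha)_{t,m-r}$ with $u$-free coefficients, while on the right the prefactor $e(u;\alpha)_{t,m-n}$ multiplies a combination of $e(u;\widetilde\alpha)_{t,n-r}$, so the total degree (and the poles, when $m<n$) match. This gives a finite list of reference values of $u$ at which to test the identity, and the vanishing of $e(u;\alpha)_{t,l}$ at $u=\alpha t^{j}$ together with the interpolation property $e_r(\alpha t^{\rho_m}q^\mu;\alpha|t)=0$ kills most terms on each side at these points, reducing the comparison to manageable pieces.

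The main obstacle is the rational function identity itself, namely controlling the signed-subset sums $\sum_{I,\epsilon}V_{\epsilon I,I^c}\,U_{I^c,r-|I|}\,g_{I,\epsilon}$ and matching them against their $y$-side counterparts. I would attempt this by induction on $m+n$: taking residues of both sides in a distinguished variable (say $x_m$), whose poles come only from the $v$-factors in $V,U$ and from the shift ratios $g_{I,\epsilon}$, should peel off one variable and reproduce the same identity in fewer variables, the base case being the Koornwinder/first-order identity \eqref{eq:4.1 of KNS} of \cite{KNS}. The parameter duality $(a,b,c,d)\leftrightarrow(\sqrt{tq}/a,\sqrt{tq}/b,\sqrt{tq}/c,\sqrt{tq}/d)$ and the $W_m\times W_n$ symmetry should exchange the two sides in the inductive step. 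The delicate points, where the real work lies, are the exact bookkeeping of the poles shared among the $V$-, $U$- and $g$-factors and the verification that the residue recursion closes consistently with the prefactor $e(u;\alpha)_{t,m-n}$.
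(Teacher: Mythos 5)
Your overall strategy coincides with the paper's: divide out $\Phi$ using its defining first-order $q$-difference equations so that \eqref{eq:theo1} becomes an identity of rational functions, then prove that identity by an induction that peels off one variable by computing residues; the paper works with the distinguished variable $y_n$ and induction on $n$ rather than your $x_m$ and $m+n$, which, by the symmetry exchanging $(x;a,b,c,d)$ with $(y;\widetilde{a},\widetilde{b},\widetilde{c},\widetilde{d})$, is an inessential difference. But two concrete gaps remain. First, your base case is wrong: \eqref{eq:4.1 of KNS} is a statement about the first-order operators valid for \emph{all} $m,n$ --- it is precisely what falls out of \eqref{eq:theo1} by comparing coefficients of $e(u;\alpha)_{t,m-1}$ --- so it is not a small-$(m+n)$ instance of the identity and cannot serve as the start of your induction. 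The induction must begin with the case where one family of variables is empty, i.e.\ van Diejen's eigenvalue identity $\mathcal{D}^x(u)\cdot 1 = e(u;\alpha)_{t,m}$, equation \eqref{eq:eigen 1}, which is the paper's starting point.

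Second, matching residues at the genuine poles of the distinguished variable is not sufficient: two rational functions with the same principal parts can still differ by an additive constant, so one must also match the limits as the distinguished variable tends to infinity, and this is a substantial portion of the paper's proof. The limit of one side (the $x$-side, as $y_n\to\infty$) is immediate, but the limit of the other side requires splitting the sum over $(K,\epsilon)$ into the cases $n\in K$ with $\epsilon_n=\pm1$ and $n\notin K$, and in the last case further decomposing $\widetilde{U}_{K^c}(u;y)$ according to the position of $n$; only after recombining the three resulting contributions (this is where the factor $u+u^{-1}$, hence the correct $u$-dependence, emerges) does one recover the identity for $(x;y')$. Your proposal omits this step entirely. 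Moreover, your middle paragraph does no work toward the proof: the interpolation property $e_r(\alpha t^{\rho_m}q^\mu;\alpha|t)=0$ concerns evaluation at the partition points $x=\alpha t^{\rho_m}q^\mu$ and never enters the rational-function identity, and no specialization in $u$ is needed --- in the paper $u$ is simply carried along as a generic parameter. Finally, the verification that all poles other than $y_n=\sqrt{tq}\,x_i^{\pm1},\ x_i^{\pm1}/\sqrt{tq}$ (equivalently, in your setup, the poles of $x_m$ not coming from the kernel ratios) are only apparent singularities is a necessary check, done in the paper by direct computation; you correctly flag it as delicate but leave it undone.
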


A proof of this theorem will be given in the next subsection.
We now give some remarks related to Theorem \ref{Theo:Main}.
Firstly, it is known that Theorem \ref{Theo:Main} in the case of $n=0$ holds.
Namely, the constant function $1$ is the eigenfunction of van Diejen's $q$-difference operators \cite{Diejen1}:
\begin{equation}
\mathcal{D}^x (u)\cdot 1 = e(u; \alpha)_{t, m}. \label{eq:eigen 1}
\end{equation}
We will use this fact as the starting point of our proof.
It is also known by \cite{KNS} that
\begin{align}
\sum_{r=0}^m (-1)^r e_r(x;\alpha|t) e(u; \alpha)_{t, m-r} = \prod_{i=1}^m e(u; x_i).
\end{align}
In general, for any partition $\lambda$ we have
\begin{align}
\mathcal{D}^x(u) P_\lambda(x) = P_\lambda(x) \prod_{i=1}^m e(u; \alpha t^{m-i} q^{\lambda_i}).
\end{align}

Secondly,
comparing the coefficient of $e(u; \alpha)_{t, m-1}$ in the left-hand side of (\ref{eq:theo1})
with that in the right-hand side, we obtain (\ref{eq:4.1 of KNS}).
In fact, the $q$-$\text{Saalsch}\ddot{\text{u}}\text{tz}$ sum gives the transformation formula for
the base points of the $q$-shifted factorials of type $BC$:
\begin{align}
e(w; b)_{t,l} 
&=\sum_{0 \le r \le l} (-1)^{r} 
\begin{bmatrix} 
l  \\ 
r  
\end{bmatrix}_{t}
e(t^{\frac{1}{2}(l-1)}b; t^{\frac{1}{2}(1-l)}/a)_{t, r} e(w; a)_{t, l-r} , \label{eq:Saal}\\
\begin{bmatrix} 
l  \\ 
r  
\end{bmatrix}_{t}
&=(-1)^r \frac{\langle t^{-l} \rangle_{t, r}}{\langle t \rangle_{t, r}} .
\end{align}
It follows from this formula that
\begin{align}
&e(u; \alpha)_{t, m-n} e(u; t/\alpha)_{t, n-k} \nonumber \\
&=\sum_{0 \le l \le n-k} (-1)^{l} 
\begin{bmatrix} 
n-k  \\ 
l 
\end{bmatrix}_{t}
e(t^{\frac{1}{2}(n-k+1)}/\alpha; t^{\frac{1}{2}(1+n-2m+k)}/\alpha)_{t,l} 
e(u; \alpha)_{t, m-k-l}.
\end{align}
Comparing the coefficients of $e(u; \alpha )_{t, m-r}$ in the both sides of (\ref{eq:theo1}),
we have 
\begin{align} \label{eq:theorem3.1 another version}
D_r^x \Phi(x; y|q, t) = \sum_{k= 0}^r  \widetilde{D}_k^y 
\begin{bmatrix} 
n-k  \\ 
r-k
\end{bmatrix}_{t}
e( t^{\frac{1}{2}(n-k+1)}/\alpha; t^{\frac{1}{2}(1+n-2m+k)}/\alpha)_{t,r-k} 
\Phi(x;y|q, t). 
\end{align}
The formula (\ref{eq:theorem3.1 another version}) for $r=1$ recovers the result (\ref{eq:4.1 of KNS}) of \cite{KNS}.

\subsection{Proof of the main result}
\label{subsec:Proof of main result}
It is enough to show the case where $m \ge n \ge 0$.
The identity (\ref{eq:theo1}) is equivalent to
\begin{equation}\label{eq:prf1}
\Phi(x;y|q,t)^{-1} \mathcal{D}^x (u) \Phi(x; y|q,t) 
=\Phi(x;y|q,t)^{-1} \widetilde{\mathcal{D}}^y (u) 
\Phi(x;y|q,t) e(u; \alpha)_{t, m-n} .
\end{equation}
Regarding this as a rational function identity of the variable $y_n$,
we prove it by computing the residues and the limits as $y_n \rightarrow \infty$.

The generating function $\mathcal{D}^x(u)$ is expanded as
\begin{align}
\mathcal{D}^x (u) &= \sum_{\substack{I \subset \{ 1, \ldots , m\} \\
\epsilon_i = \pm1 (i \in I)}} (-1)^{|I|} V_{\epsilon I, I^c} (x) U_{I^c} (u; x) T_{q, x}^{(I, \epsilon)} , \\
U_{J}(u; x) &= \sum_{\substack{ I \subset J \\  \delta_i = \pm1 (i \in I)}}
e( u; \alpha)_{t, |J|-|I|} 
\prod_{i \in I} w(x_i^{\delta_i}) 
\prod_{\substack{i, j \in I\\ i < j}}  
v( x_{i}^{\delta_i} x_{j}^{\delta_{j}} ) v( q^{-1} x_{i}^{-\delta_i} x_{j}^{-\delta_{j}} )
\prod_{\substack{i \in I\\ j \in J\backslash I}}
v( x_{i}^{\delta_i} x_{j}^{\pm 1} ) . \label{eq:def U}
\end{align}
Similarly, we expand $\widetilde{\mathcal{D}}_y(u)$.
We also define the rational function $F(z; w)$ in the variables $z=(z_1, \ldots, z_r)$ and $w=(w_1, \ldots, w_s)$ by
\begin{equation}
F(z; w)=
\prod_{\substack{1 \le i \le r \\ 1 \le k \le s}} \frac{e(\sqrt{q/t}z_i; w_k)}
{e(\sqrt{tq}z_i; w_k)} .
\end{equation}
For any subset $I =\{ i_1, \ldots, i_r \} \subset \{1, \ldots, m\}, |I|=r$ and signs $\epsilon_i = \pm1 (i \in I)$,
we write $x_{I}^{\epsilon} = (x_{i_1}^{\epsilon_{i_1}}, \ldots, x_{i_r}^{\epsilon_{i_r}})$.
Then, (\ref{eq:prf1}) is expressed as
\begin{equation}\label{eq:prf2}
\begin{split}
&\sum_{\substack{I \subset \{ 1, \ldots , m\} \\ \epsilon_i = \pm1 (i \in I)}}(-1)^{|I|}
V_{\epsilon I, I^c} (x) U_{I^c} (u; x) F(x_{I}^\epsilon ; y) \\
&=e(u; \alpha)_{t, m-n} 
\sum_{\substack{K \subset \{ 1, \ldots , n\} \\ \epsilon_k = \pm1 (k \in K)}}
(-1)^{|K|}
\widetilde{V}_{\epsilon K, K^c} (y) \widetilde{U}_{K^c} (u; y)
F(y_{K}^\epsilon ; x) .
\end{split}
\end{equation}
We prove this identity by induction on $n$, starting with (\ref{eq:eigen 1}) of the case $n=0$.
We assume that our identity holds when the number of $y$ variables is less than $n$.

Firstly, we consider the residues of the both sides as meromorphic functions in $y_n$.
The left-hand side of (\ref{eq:prf2}) may have the poles at 
\begin{equation}
\begin{split}
y_n = \sqrt{tq} x_i^{\pm 1} , \quad \frac{1}{\sqrt{tq}} x_i^{\pm1} \ ( 1 \le i \le m).
\end{split}
\end{equation}
On the other hand, there may be the poles at 
\begin{equation}
\begin{split}
y_n = & \sqrt{tq} x_i^{\pm 1},  \quad \frac{1}{\sqrt{tq}} x_i^{\pm1} \ ( 1 \le i \le m), 
\quad \pm 1, \quad \pm q^{1/2}, \\
& \pm q^{-1/2}, \quad  y_k^{\pm 1} , \quad  qy_k^{\pm 1}, \quad q^{-1} y_k^{\pm 1} \ ( 1 \le k \le n)
\end{split}
\end{equation}
in the right-hand side.
However, we can check by direct calculation that 
the points other than $y_n = \sqrt{tq} x_i^{\pm 1},$
$\displaystyle \frac{1}{\sqrt{tq}} x_i^{\pm1}\ (i=1, \ldots, m)$
are apparent singular points.
Since (\ref{eq:prf2}) is invariant under the inversions and permutations for $x$ and $y$,
we have only to analyze the residue at the point $y_n = \sqrt{tq} x_m$.

In the left-hand side, the term indexed by $(I, \epsilon)$ has a pole at $y_n = \sqrt{tq} x_m$
if and only if $m \in I$ and $\epsilon_m =1$.
Note that 
\begin{equation}
F(x_{I'}^\epsilon; \sqrt{tq}x_m) = \prod_{i \in I'}
\frac{\langle qx_m x_i^{\epsilon_i}, tx_m/x_i^{\epsilon_i} \rangle}
{\langle tqx_m x_i^{\epsilon_i}, x_m/x_i^{\epsilon_i} \rangle} 
= \prod_{i \in I'} \frac{v(x_m x_i^{-\epsilon_i})}{v(qx_m x_i^{\epsilon_i})},
\end{equation}
where $I' = I \setminus \{m \} $.
Thus it follows that the residue is equal to
\begin{align}
&-\frac{\sqrt{t} \langle ax_m, bx_m, cx_m, dx_m \rangle(\sqrt{tq}-\sqrt{q/t})x_m}
{\langle x_m^2, tqx_m^2 \rangle} \prod_{1 \le j \le m-1} v(x_j^{\pm 1} x_m) \nonumber \\ 
& \cdot F(x_m; y') \times  (\text{l.h.s. for the case of $(x'; y')$}),
\end{align}
where $x'= (x_1, \ldots, x_{m-1})$ and $y'=(y_1, \ldots, y_{n-1})$.

In the right-hand side,
the term indexed by $(K, \epsilon)$ cannot have a pole at $y_n = \sqrt{tq} x_m$
unless $n \in K$ and $\epsilon_n=-1$.
The corresponding residue is equal to
\begin{align}
&-\frac{\sqrt{t} \langle ax_m, bx_m, cx_m, dx_m \rangle(\sqrt{tq}-\sqrt{q/t})x_m}
{\langle x_m^2, tqx_m^2 \rangle}  
\prod_{1 \le j \le m-1} v(x_j^{\pm 1} x_m) \nonumber \\
& \cdot
F(x_m; y') \times (\text{r.h.s. for the case of $(x';y')$}).
\end{align}
Therefore it follows from the induction hypothesis that 
the residues of the both sides at the point $y_n= \sqrt{tq}x_m$ are equal. 

Next, we calculate the limits of the both sides as $y_n \rightarrow \infty$.
It is easy to check
\begin{equation}
\lim_{y_n \rightarrow \infty} (\text{l.h.s.}) = (\text{l.h.s. for the case 
of $(x;y')$}) .
\end{equation}
We consider the limit of the individual terms of the right-hand side in the following three cases:
\begin{center}
(i)\ $n \in K$ and $\epsilon_{n}=1$,\qquad (ii)\ $n \in K$ and $\epsilon_{n}=-1$, \qquad (iii)\ $n \notin K$.
\end{center}
By direct calculation, we can check in the case (i) and (ii) respectively as follows:
\begin{align}
&\lim_{y_n \rightarrow \infty } \left(\sum_{\substack{n \in K \\ \epsilon_n =1}} 
(-1)^{|K|}\widetilde{V}_{\epsilon K, K^c} (y) \widetilde{U}_{K^c} (u; y)F(y_K^{\epsilon}; x) \right) \nonumber \\ 
&= -\widetilde{\alpha} t^{n-m-1} 
\sum_{\substack{K' \subset \{ 1, \ldots , n-1\} \\ \epsilon_k = \pm1 (k \in K')}}
(-1)^{|K'|}
\widetilde{V}_{\epsilon K', {K'}^c} (y') \widetilde{U}_{K'^c} (u; y')F(y_{K'}^\epsilon; x) , \label{eq:lim of r1} \\
&\lim_{y_n \rightarrow \infty } \left(\sum_{\substack{ n \in K \\ \epsilon_n = - 1\ }}
(-1)^{|K|}\widetilde{V}_{\epsilon K, K^c} (y) \widetilde{U}_{K^c} (u; y)F(y_K^{\epsilon}; x)  \right) \nonumber \\
&= -\widetilde{\alpha}^{-1} t^{m-n+1} 
\sum_{\substack{K' \subset \{ 1, \ldots , n-1\} \\ \epsilon_k = \pm1 (k \in K')}}
(-1)^{|K'|}\widetilde{V}_{\epsilon K', {K'}^c} (y') \widetilde{U}_{K'^c} (u; y') F(y_{K'}^\epsilon; x) \label{eq:lim of r2}.
\end{align}
In the case (iii), 
we divide the sum
\begin{equation}
\widetilde{U}_{K^c}(u; y) = 
\sum_{\substack{ L \subset K^c \\ \delta_k = \pm1 (k \in L)}}
e( u; \widetilde{\alpha})_{t, |K^c|-|L|} \prod_{k \in L} \widetilde{w}(y_k^{\delta_k})
\prod_{\substack{k, l \in L\\ k < l}}  
v( y_{k}^{\delta_k} y_{l}^{\delta_{l}} ) 
v( q^{-1} y_{k}^{-\delta_k} y_{l}^{-\delta_{l}} )
\prod_{\substack{k \in L\\ l \in K^c\backslash L}}
v( y_{k}^{\delta_k} y_{l}^{\pm 1} ) 
\end{equation}
into the three groups of terms as 
\begin{center}
(a)\ $ n \in L$ and $\delta_n={1}$, \qquad
(b)\ $n \in L$ and $\delta_n={-1}$, \qquad
(c)\ $n \notin L$.
\end{center}
Combining the limits of these three cases, we obtain
\begin{align}
&\lim_{y_n \rightarrow \infty } \left( \sum_{\substack{n \notin K}} (-1)^{|K|}
\widetilde{V}_{\epsilon K, K^c} (y) \widetilde{U}_{K^c} (u; y) F(y_K^{\epsilon}; x) \right) \nonumber \\
&= (u+u^{-1})
\sum_{\substack{K' \subset \{ 1, \ldots , n-1\} \\ \epsilon_k = \pm1 (k \in K')}}
(-1)^{|K'|}
\widetilde{V}_{\epsilon K', {K'}^c} (y') \widetilde{U}_{K'^c} (u; y') F(y_{K'}^{\epsilon}; x). \label{eq:lim of r3}
\end{align}
From
(\ref{eq:lim of r1}), (\ref{eq:lim of r2}) and (\ref{eq:lim of r3}),
it follows that
\begin{equation}
\lim_{y_n \rightarrow \infty} (\text{r.h.s.}) 
= (\text{r.h.s. for the case of $(x;y')$}),
\end{equation}
and hence we complete the proof of Theorem \ref{Theo:Main}.

Replacing $(q, t)$ by $(t, q)$, the formula (\ref{eq:prf2}) can be rewritten
explicitly as follows.
\begin{theo}\label{Theo:set ver}
Given two sets of variables $x=(x_1, \ldots, x_m)$ and
$y=(y_1, \ldots, y_n)$, the following identity holds: 
\begin{align}
&\sum_{\substack{I \subset \{ 1, \ldots , m\} \\ \epsilon_i = \pm1 (i \in I)}} 
\Biggl( (-1)^{|I|}
\prod_{i \in I} \frac{\langle ax_i^{\epsilon_i}, bx_i^{\epsilon_i}, cx_i^{\epsilon_i}, dx_i^{\epsilon_i} \rangle}
{\langle x_i^{2\epsilon_i}, tx_i^{2\epsilon_i} \rangle} 
\prod_{\substack{i, j \in I\\ i < j}}
\frac{\langle qx_i^{\epsilon_i} x_{j}^{\epsilon_{j}}, tqx_i^{\epsilon_i} x_{j}^{\epsilon_{j}} \rangle}
{\langle x_i^{\epsilon_i}x_{j}^{\epsilon_{j}}, tx_i^{\epsilon_i}x_{j}^{\epsilon_{j}} \rangle}
\prod_{\substack{i \in I\\ j \in I^c}}
\frac{\langle q x_i^{\epsilon_i} x_{j}^{\pm 1}\rangle}{\langle x_i^{\epsilon_i}x_{j}^{\pm 1} \rangle} \nonumber \\
& \cdot \sum_{\substack{ J \subset I^c \\  \delta_i = \pm1 (i \in J)}} 
\Biggl(e(u; \sqrt{q/t} \alpha)_{q, |I^c|-|J|} 
\prod_{i \in J} 
\frac{\langle a x_i^{\delta_i}, b x_i^{\delta_i}, c x_i^{\delta_i}, d x_i^{\delta_i} \rangle}
{\langle x_i^{2\delta_i}, tx_i^{2\delta_i} \rangle} 
\prod_{\substack{i, j \in J\\ i < j}}  
\frac{\langle q x_i^{\delta_i} x_{j}^{\delta_{j}}, t x_i^{\delta_i} x_{j}^{\delta_{j}}/q\rangle}
{\langle x_i^{\delta_i}x_{j}^{\delta_{j}}, tx_i^{\delta_i}x_{j}^{\delta_{j}}\rangle} \nonumber \\
&\cdot
\prod_{\substack{i \in J\\ j \in I^c\backslash J}} 
\frac{\langle q x_i^{\delta_i} x_{j}^{\pm 1}\rangle}
{\langle x_i^{\delta_i}x_{j}^{\pm 1} \rangle} \Biggr) 
\prod_{\substack{i \in I \\ 1 \le k \le n}} 
\frac{e(\sqrt{t/q}x_i^{\epsilon_i}; y_k)}
{e(\sqrt{tq}x_i^{\epsilon_i}; y_k)}\Biggr)  \nonumber \\
&=e(u; \sqrt{q/t} \alpha)_{q, m-n}
\sum_{\substack{K \subset \{ 1, \ldots , n\} \\ \epsilon_k=\pm 1 (k \in K)}} 
\Biggl( (-1)^{|K|}\prod_{k \in K}
\frac{\langle \sqrt{tq}y_k^{\epsilon_k}/a, \sqrt{tq}y_k^{\epsilon_k}/b ,
\sqrt{tq}y_k^{\epsilon_k}/c, \sqrt{tq}y_k^{\epsilon_k}/d \rangle}
{\langle y_k^{2\epsilon_k}, ty_k^{2\epsilon_k} \rangle} \nonumber \\ 
&\cdot
\prod_{\substack{k, l \in K\\ k < l}} 
\frac{\langle qy_k^{\epsilon_k} y_{l}^{\epsilon_{l}}, tqy_k^{\epsilon_k} y_{l}^{\epsilon_{l}} \rangle}
{\langle y_k^{\epsilon_k}y_{l}^{\epsilon_{l}}, ty_k^{\epsilon_k}y_{l}^{\epsilon_{l}} \rangle}
\prod_{\substack{k \in K\\ l \in K^c}}
\frac{\langle q y_k^{\epsilon_k} y_{l}^{\pm 1}\rangle}{\langle y_k^{\epsilon_k}y_{l}^{\pm 1} \rangle}
\sum_{\substack{ L \subset K^c \\ \delta_k = \pm1 (k \in L)}} 
\Biggl( e(u; \sqrt{tq}/\alpha)_{q, |K^c|-|L|}\nonumber \\
&\cdot
\prod_{k \in L} 
\frac{\langle \sqrt{tq}y_k^{\delta_k}/a, \sqrt{tq}y_k^{\delta_k}/b ,
\sqrt{tq}y_k^{\delta_k}/c, \sqrt{tq}y_k^{\delta_k}/d \rangle}
{\langle y_k^{2\delta_k}, ty_k^{2\delta_k} \rangle}
\prod_{\substack{k, l \in L\\ k < l}} 
\frac{\langle q y_k^{\delta_k} y_{l}^{\delta_{l}}, t y_k^{\delta_k} y_{l}^{\delta_{l}}/q \rangle}
{\langle y_k^{\delta_k}y_{l}^{\delta_{l}}, ty_k^{\delta_k}y_{l}^{\delta_{l}}\rangle} \nonumber \\
&\cdot
\prod_{\substack{k \in L\\ l \in K^c\backslash L}} 
\frac{\langle q y_k^{\delta_k} y_{l}^{\pm 1}\rangle}{\langle y_k^{\delta_k}y_{l}^{\pm 1} \rangle}\Biggr)
\prod_{\substack{k \in K \\ 1 \le i \le m}} 
\frac{e(\sqrt{t/q}y_k^{\epsilon_k}; x_i)}
{e(\sqrt{tq}y_k^{\epsilon_k}; x_i)}
 \Biggr) 
\label{eq:main theo q}.
\end{align}
\end{theo}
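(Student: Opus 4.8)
The plan is to recognize that the identity (\ref{eq:main theo q}) is nothing but the already-established relation (\ref{eq:prf2}) written out in full and then subjected to the substitution $(q,t)\mapsto(t,q)$. Since (\ref{eq:prf2}) is equivalent to Theorem \ref{Theo:Main}, and since, after cancelling the kernel $\Phi(x;y|q,t)$, it is a rational-function identity in the generic parameters $a,b,c,d,q,t,u$ and the variables $x,y$ (every shift $T_{q,x}^{(I,\epsilon)}$ having been converted into the multiplier $F(x_I^\epsilon;y)$), we are free to interchange the roles of $q$ and $t$. Thus no new analytic input is needed; the content of the argument is a careful unwinding of the abbreviations $V$, $U$, $F$ and their tilde counterparts.

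First I would substitute into the left-hand side of (\ref{eq:prf2}) the explicit products defining $V_{\epsilon I,I^c}(x)$, the expansion (\ref{eq:def U}) of $U_{I^c}(u;x)$ (whose inner sum over $J\subset I^c$ becomes the inner sum of the statement), and the definition of $F(x_I^\epsilon;y)$, rewriting every occurrence of $w$ and $v$ through the bracket symbol $\langle\,\cdot\,\rangle$. On the right-hand side I would do the same for $\widetilde V_{\epsilon K,K^c}(y)$, $\widetilde U_{K^c}(u;y)$ and $F(y_K^\epsilon;x)$, recalling that the tilde denotes the parameter change $(a,b,c,d)\mapsto(\sqrt{tq}/a,\sqrt{tq}/b,\sqrt{tq}/c,\sqrt{tq}/d)$, so that the relevant weight factor reads $\langle\sqrt{tq}\,y_k^{\epsilon_k}/a,\ldots\rangle/\langle y_k^{2\epsilon_k},qy_k^{2\epsilon_k}\rangle$ and the base point of the $BC$-type $q$-shifted factorials is $\widetilde\alpha=t/\alpha$.

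The heart of the verification is then to apply $(q,t)\mapsto(t,q)$ factor by factor and match the result against (\ref{eq:main theo q}). Three points deserve attention. The $w$-denominators $\langle z^2,qz^2\rangle$ become $\langle z^2,tz^2\rangle$, while each $v$-factor $\langle tz\rangle/\langle z\rangle$ becomes $\langle qz\rangle/\langle z\rangle$. The factor $v(q^{-1}x_i^{-\delta_i}x_j^{-\delta_j})$ requires the antisymmetry $\langle z^{-1}\rangle=-\langle z\rangle$: after the swap it equals $\langle (t/q)x_i^{\delta_i}x_j^{\delta_j}\rangle/\langle tx_i^{\delta_i}x_j^{\delta_j}\rangle$, the two signs cancelling, and together with $v(x_i^{\delta_i}x_j^{\delta_j})$ it yields precisely the product $\langle qx_i^{\delta_i}x_j^{\delta_j},\,tx_i^{\delta_i}x_j^{\delta_j}/q\rangle/\langle x_i^{\delta_i}x_j^{\delta_j},\,tx_i^{\delta_i}x_j^{\delta_j}\rangle$ of the statement. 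Finally, using the fractional-power convention fixed in the introduction, one checks that $\alpha=\sqrt{abcd/q}$ is sent to $\sqrt{q/t}\,\alpha=\sqrt{abcd/t}$, that $\widetilde\alpha=t/\alpha$ is sent to $\sqrt{tq}/\alpha$, that $\sqrt{tq}$ is invariant, and that $\sqrt{q/t}$ in $F$ becomes $\sqrt{t/q}$; in particular the prefactor $e(u;\alpha)_{t,m-n}$ turns into $e(u;\sqrt{q/t}\,\alpha)_{q,m-n}$ and the interior base points $\alpha$ and $\widetilde\alpha$ become $\sqrt{q/t}\,\alpha$ and $\sqrt{tq}/\alpha$ as claimed.

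The main obstacle is not conceptual but purely a matter of bookkeeping: keeping the half-integer powers and the sign conventions straight throughout, especially the inversion identity applied to the $v(q^{-1}x^{-\delta}\cdots)$ factors, and confirming that all stray signs cancel so that the coefficients $(-1)^{|I|}$ and $(-1)^{|K|}$ are preserved and no spurious overall sign is introduced. Once each group of brackets has been matched to its counterpart in (\ref{eq:main theo q}), the identity follows at once from (\ref{eq:prf2}).
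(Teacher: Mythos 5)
Your proposal is correct and takes essentially the same route as the paper: the paper obtains Theorem \ref{Theo:set ver} precisely by writing out (\ref{eq:prf2}) (already established in the proof of Theorem \ref{Theo:Main}) in terms of the explicit products and then replacing $(q,t)$ by $(t,q)$. Your factor-by-factor bookkeeping --- the treatment of $v(q^{-1}x_i^{-\delta_i}x_j^{-\delta_j})$ via $\langle z^{-1}\rangle=-\langle z\rangle$ and the identification of the base points $\sqrt{q/t}\,\alpha$ and $\sqrt{tq}/\alpha$ after the swap --- is exactly the verification the paper leaves implicit.
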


\section{Transformation formulas for multiple basic hypergeometric series}
\label{sec:Transformation}
In the case of type $A$,
the kernel function of Cauchy type intertwines  
the Macdonald $q$-difference operators \cite{MiN}.
This property gives the rational function identity which is similar to 
(\ref{eq:main theo q}).
Applying certain specializations to this identity,
Kajihara \cite{Ka} derived the Euler transformation formula for multiple basic hypergeometric series.
In the same way, we propose two types of transformation formulas for multiple basic hypergeometric series.

\subsection{Type $BC$ case}
\label{subsec:type BC}
In this subsection, we derive a transformation formula of type $BC$
from Theorem \ref{Theo:set ver}.
We take the multi-indices $\alpha=(\alpha_1, \ldots, \alpha_M)
\in \mathbb{N}^M$ and $\beta
=(\beta_1, \ldots, \beta_N) \in \mathbb{N}^N$
such that 
$|\alpha| :=\sum_{i=1}^M \alpha_i = m, |\beta|=\sum_{k=1}^N \beta_k =n$.
Here $\mathbb{N}$ is the set of non-negative integers.
Then we consider the following specializations:
\begin{align}
x&=p_{\alpha}(z;q) := (z_1, qz_1, \ldots, q^{\alpha_1-1} z_1;
z_2, qz_2, \ldots, q^{\alpha_2-1} z_2; \ldots ;z_M, qz_M, \ldots, q^{\alpha_M-1} z_M) , \\
y&=p_{\beta}(w;q) := (w_1, qw_1, \ldots, q^{\beta_1-1} w_1;
w_2, qw_2, \ldots, q^{\beta_2-1} w_2; \ldots ;w_N, qw_N, \ldots, q^{\beta_N-1} w_N).
\end{align}
These specializations are called multiple principal specializations. 
We apply these to (\ref{eq:main theo q}).

For each pair $(I, \epsilon)$,
we divide the subset $I$ as $I^+ \sqcup I^-$ by setting
\begin{align}
I^+=\{ i  \in I| \epsilon_i =1 \} ,\quad I^-=\{i \in I|\epsilon_i=-1 \}.
\end{align}
Similarly, we divide the subset $J$ by $J^+=\{i \in J|\delta_i=1 \}$
and $J^- = \{i \in J| \delta_i=-1\}$ for each pair $(J, \delta)$.
We also denote the complement $\{1, \ldots, m\} \backslash (I \cup J)$ by $C$.
Using these symbols, we rewrite the left-hand side of (\ref{eq:main theo q}) as
\begin{align}
&\sum_{\substack{I^+\sqcup I^- \sqcup J^+\sqcup J^- \sqcup C \\= \{1, \ldots, m \}}}
\Biggl( (-1)^{|I^+| +|I^-|}e(u; \sqrt{q/t} \alpha)_{q, |C|}
 \prod_{i \in I^+ \sqcup J^+}\frac{\langle ax_i, bx_i, cx_i, dx_i\rangle}
{\langle x_i^2, tx_i^2 \rangle} 
\nonumber \\
&\cdot \prod_{i \in I^- \sqcup J^-} \frac{\langle a/x_i, b/x_i, c/x_i, d/x_i\rangle}
{\langle x_i^{-2}, tx_i^{-2} \rangle} \prod_{\substack{i, j \in I^+\\ i < j}} 
\frac{\langle q x_i x_{j}\rangle_{t, 2}}
{\langle x_i x_{j}\rangle_{t, 2}}
\prod_{\substack{i, j \in I^-\\ i < j}} 
\frac{\langle q x_i^{-1} x_{j}^{-1}\rangle_{t, 2}}
{\langle x_i^{-1} x_{j}^{-1}\rangle_{t, 2}}
\prod_{\substack{i \in I^+\\ j \in I^-}} 
\frac{\langle q x_i x_{j}^{-1}\rangle_{t, 2}}
{\langle x_i x_{j}^{-1}\rangle_{t, 2}}\nonumber \\
&\cdot 
\prod_{\substack{i \in I^+\\ j \in J^- \sqcup C \sqcup J^+}} 
\frac{\langle q x_i x_{j}^{\pm 1}\rangle}
{\langle x_i x_{j}^{\pm 1}\rangle}
\prod_{\substack{i \in I^-\\ j \in J^- \sqcup C \sqcup J^+}} 
\frac{\langle q x_i^{-1} x_{j}^{\pm 1}\rangle}
{\langle x_i^{-1} x_{j}^{\pm 1}\rangle}
\prod_{\substack{i, j \in J^+\\ i < j}} 
\frac{\langle q x_i x_{j}, tx_i x_j/q \rangle}
{\langle x_i x_j\rangle_{t, 2}}  
\prod_{\substack{i, j \in J^-\\ i < j}} 
\frac{\langle q x_i^{-1} x_{j}^{-1}, t x_i^{-1} x_{j}^{-1}/q \rangle}
{\langle x_i^{-1} x_{j}^{-1}\rangle_{t, 2}}
\nonumber \\
&\cdot \prod_{\substack{i \in J^+\\ j \in J^-}} 
\frac{\langle q x_i x_{j}^{-1}, t x_i x_{j}^{-1}/q \rangle}
{\langle x_i x_{j}^{-1}\rangle_{t, 2}} 
\prod_{\substack{i \in J^+\\ j \in C }} 
\frac{\langle q x_i x_{j}^{\pm 1}\rangle}
{\langle x_i x_{j}^{\pm 1}\rangle}
\prod_{\substack{i \in J^-\\ j \in  C }} 
\frac{\langle q x_i^{-1} x_{j}^{\pm 1}\rangle}
{\langle x_i^{-1} x_{j}^{\pm 1}\rangle} 
\prod_{\substack{i \in I^+ \\ 1 \le k \le n}} 
\frac{\langle \sqrt{t/q} x_i y_{k}^{\pm 1}\rangle}
{\langle \sqrt{tq} x_i y_{k}^{\pm 1}\rangle}
\prod_{\substack{i \in I^- \\ 1 \le k \le n}} 
\frac{\langle \sqrt{t/q}  x_i^{-1} y_{k}^{\pm 1}\rangle}
{\langle \sqrt{tq} x_i^{-1} y_{k}^{\pm 1}\rangle} \Biggr). \label{eq:before p.s.}
\end{align}
We first consider the principal specialization 
$x_i =q^{i-1}z\ (1 \le i \le m)$ of a single block.
Noting that $\langle qx_i/x_{i+1} \rangle=0 \ (i=1, \ldots, m-1)$,
we find that non-zero terms arise from the divisions of the following form:\begin{equation}
\begin{split}
&I^-=\{1, 2, \ldots, i_1 \}, \quad J^- =\{i_1+1, i_1+2, \ldots, i_2 \}, \quad C =\{i_2+1, i_2+2, \ldots, i_3 \},  \\
&J^+=\{i_3+1, i_3+2, \ldots, i_4 \}, \quad I^+ =\{i_4+1, i_4+2, \ldots, m \},
\quad 0 \le i_1 \le i_2 \le i_3 \le i_4 \le m.
\end{split}
\end{equation}

Next, we consider the multiple principal specializations $x=p_{\alpha}(z; q),
y= p_{\beta}(w;q)$.
We replace the index set $\{1, \ldots, m \}$ by
\begin{equation}
\{1, \ldots , m\} = \{(i, a) | 1 \le i \le M, \ 0 \le a < \alpha_i \} \label{eq:replace of index}
\end{equation}
and write $x_{(i, a)} = q^a z_i $.
For any two multi-indices $\mu , \nu \in \mathbb{N}^M$,
if $\mu_i \le \nu_i \ (i= 1, \ldots, M)$ then
we write $\mu \le \nu$.
From the same argument as above applied to each block,
$I^-, J^- , C ,J^+, I^+$
are parametrized by the four multi-indices
$\mu^-, \nu^-, \nu^+, \mu^+ \in \mathbb{N}^M$ such that
$0 \le \mu^- \le \nu^- \le \nu^+ \le \mu^+ \le \alpha$
as follows:
\begin{equation}
\begin{split}
I^- &= \{(i, a) | 1 \le i \le M, \ 0 \le a < \mu_i^- \} ,\\
J^- &= \{(i, a) | 1 \le i \le M, \mu_i^- \le a < \nu_i^- \} ,\\
C &= \{(i, a) | 1 \le i \le M, \ \nu_i^- \le a < \nu_i^+ \} ,\\
J^+ &= \{(i, a) | 1 \le i \le M, \ \nu_i^+ \le a < \mu_i^+ \} ,\\
I^+ &= \{(i, a) | 1 \le i \le M, \ \mu_i^+ \le a < \alpha_i \}. 
\end{split}
\end{equation}
In the following, 
we omit the base $q$ in the $q$-shifted factorials $\langle z \rangle_{q, k}$
and $e(z;w)_{q, k}$.

With this parametrization (\ref{eq:replace of index}) of indices,
the formula
(\ref{eq:before p.s.}) specialized by $x=p_{\alpha}(z; q),
y= p_{\beta}(w;q)$ gives rise to
\begin{align}
&\prod_{1 \le i \le M} \frac{\langle az_i, bz_i, cz_i, dz_i\rangle_{\alpha_i}}
{\langle z_i^2, tz_i^2 \rangle_{\alpha_i}}
\prod_{1 \le i < j \le M}
\frac{\langle q^{\alpha_j}z_i z_j,  tq^{\alpha_j}z_i z_j\rangle_{\alpha_i}}
{\langle z_i z_j,  tz_i z_j\rangle_{\alpha_i}}
\prod_{\substack{1\le i \le M\\ 1 \le k \le N}}
\frac{\langle \sqrt{t/q}z_i w_k,  \sqrt{tq}z_i/q^{\beta_k} w_k \rangle_{\alpha_i}}
{\langle q^{\beta_k} \sqrt{t/q} z_i w_k,  \sqrt{tq} z_i/w_k \rangle_{ \alpha_i}} \nonumber \\
&\cdot \sum_{0 \le \mu^{-} \le \nu^{-} \le \nu^{+} \le \mu^{+} \le \alpha}
\Biggl( (-1)^{|\alpha|+|\nu^+| + |\nu^-|}
e(u; \sqrt{q/t}\alpha)_{|\nu^+|-|\nu^-|} 
\prod_{1 \le i \le M} \frac{\langle z_i/a, z_i/b, z_i/c, z_i/d\rangle_{\nu_i^{-}}}
{\langle az_i, bz_i, cz_i, dz_i\rangle_{\nu_i^{+}}} \nonumber \\
&\cdot
\prod_{1 \le i \le j \le M}
\frac{\langle q^{\mu_i^- + \mu_j^-} z_i z_j/tq, tq^{\mu_i^+ + \mu_j^+} z_i z_j/q\rangle}
{\langle z_i z_j/tq, t z_i z_j/q\rangle}
\prod_{1 \le i < j \le M}
\frac{\langle q^{\mu_i^- - \mu_j^-} z_i/z_j, q^{\mu_i^+ - \mu_j^+} z_i/z_j\rangle}{\langle z_i/z_j, z_i/z_j\rangle} \nonumber \\
&\cdot
\prod_{1 \le i < j \le M}
\frac{\langle q^{\nu_i^- - \nu_j^-} z_i/z_j, q^{\nu_i^+ - \nu_j^+} z_i/z_j\rangle
\langle  z_i z_j/t, z_i z_j/q \rangle_{\nu_i^- + \nu_j^-}}
{\langle z_i/z_j, z_i/z_j\rangle \langle  z_i z_j, tz_i z_j/q \rangle_{\nu_i^+ + \nu_j^+}}
\prod_{1 \le i, j \le M}
\frac{\langle q^{\mu_i^- + \mu_j^+} z_i z_j/q\rangle}{\langle z_i z_j/q\rangle} \nonumber \\
&\cdot
\prod_{1 \le i, j \le M}
\frac{\langle q^{\nu_i^- + \nu_j^+} z_i z_j/q,
q^{\mu_i^- - \mu_j^+} z_i/tz_j \rangle 
\langle  z_i z_j/q \rangle_{\mu_i^- + \nu_j^+}
\langle  tz_i z_j/q \rangle_{\mu_i^+ + \nu_j^+}
\langle  z_i z_j/tq \rangle_{\mu_i^-}
\langle  z_i z_j/q \rangle_{\mu_i^+ }
}
{\langle z_i z_j/q, z_i/tq^{\mu_j^+}z_j\rangle \langle  z_i z_j/t \rangle_{\mu_i^-+ \nu_j^-}
\langle  z_i z_j \rangle_{\mu_i^+ + \nu_j^-}
\langle  q^{\alpha_j}z_i z_j \rangle_{\mu_i^-}
\langle  tq^{\alpha_j} z_i z_j \rangle_{\mu_i^+}
} \nonumber \\
&\cdot
\prod_{1 \le i, j \le M}
\frac{\langle  z_i/q^{\nu_j^-}z_j , z_i/tq^{\alpha_j}z_j \rangle_{\mu_i^-}
\langle  z_i/q^{\alpha_j}z_j \rangle_{\mu_i^+}
\langle  qz_i/tq^{\nu_j^+}z_j , z_i/q^{\nu_j^+}z_j \rangle_{\nu_i^-}
\langle  z_i/q^{\mu_j^+}z_j \rangle_{\nu_i^+}}
{\langle qz_i/tq^{\nu_j^+}z_j, qz_i/ z_j \rangle_{\mu_i^-}
\langle qz_i/ z_j \rangle_{\mu_i^+}
\langle qz_i/tq^{\mu_j^+}z_j, qz_i/ z_j \rangle_{\nu_i^-}
\langle qz_i/ z_j \rangle_{\nu_i^+}} \nonumber \\
&\cdot
\prod_{\substack{1\le i \le M\\ 1 \le k \le N}}
\frac{\langle q^{\beta_k} z_i w_k/\sqrt{tq}, \sqrt{q/t} z_i/w_k \rangle_{\mu_i^-}
\langle q^{\beta_k} \sqrt{t/q} z_i w_k,  \sqrt{tq} z_i/w_k \rangle_{\mu_i^+}}
{\langle z_i w_k/\sqrt{tq}, \sqrt{q/t} z_i/q^{\beta_k} w_k \rangle_{\mu_i^-}
\langle \sqrt{t/q}z_i w_k,   \sqrt{tq}z_i/q^{\beta_k}w_k \rangle_{\mu_i^+}} \Biggr)
\label{eq:after p.s.}.
\end{align}
We used $\langle a \rangle_k$ version of some formulas of Appendix I in \cite{GR}.
We also used the formula
\begin{align}
\prod_{1 \le i \neq j \le m} \frac{\langle qx_i/x_j \rangle_{\lambda_i}}{\langle x_i/q^{\lambda_j} x_j \rangle_{\lambda_i}}
= \prod_{1 \le i < j \le m} \frac{\langle q^{\lambda_i-\lambda_j}x_i/x_j \rangle}{\langle x_i/x_j \rangle},
\end{align}
due to Milne \cite[Lemma 6.11]{Mil}.

We apply the same specializations to the right-hand side of (\ref{eq:main theo q}).
If we denote (\ref{eq:after p.s.}) by \\
$F^{(M, N)}_{\alpha, \beta} (z;w;a,b,c,d)$,
Theorem \ref{Theo:set ver} implies the following duality transformation formula:
\begin{equation}
F^{(M, N)}_{\alpha, \beta} (z;w;a,b,c,d)
= e(u;\sqrt{q/t}\alpha)_{M-N}
F^{(N,M)}_{\beta, \alpha} (w;z;\sqrt{tq}/a, \sqrt{tq}/b, \sqrt{tq}/c, \sqrt{tq}/d).
\end{equation}
Relabeling $M$ and $N$ by $m$ and $n$, and 
replacing the variables and parameters by 
\begin{equation}
\begin{split}
z_i \rightarrow \sqrt{tq}x_i\ (i=1, \ldots, m), \quad w_k \rightarrow y_k\ (k=1, \ldots, n) ,\\
(a, b, c, d)\rightarrow (\sqrt{tq}/a_1,\sqrt{tq}/a_2,\sqrt{tq}/a_3,\sqrt{tq}/a_4),
\end{split}
\end{equation}
we obtain the following theorem.
\begin{theo}\label{Theo:trans form BC}
Let $a_0= \sqrt{a_1 a_2 a_3 a_4/q}$.
Take two sets of variables $x=(x_1, \ldots, x_m), y=(y_1, \ldots, y_n)$
and two multi-indices $\alpha=(\alpha_1, \ldots, \alpha_m) \in \mathbb{N}^m, \beta =(\beta_1, \ldots, \beta_n) \in \mathbb{N}^n$. 
Then the following identity holds:
\begin{align}
&\prod_{1 \le i \le m} \frac{\langle tq x_i/a_1, tqx_i/a_2, 
tqx_i/a_3, tqx_i/a_4 \rangle_{\alpha_i}}
{\langle tqx_i^2, t^2qx_i^2\rangle_{\alpha_i}}
\prod_{1 \le i < j \le m}
\frac{\langle tq^{\alpha_j+1}x_i x_j,  t^2q^{\alpha_j+1}x_i x_j\rangle_{\alpha_i}} 
{\langle tqx_i x_j,  t^2qx_i x_j\rangle_{\alpha_i}} \nonumber \\
&\cdot
\prod_{\substack{1 \le i \le m \\ 1 \le k \le n}}
\frac{\langle tx_i y_k,  tq^{1-\beta_k} x_i/y_k \rangle_{\alpha_i}}
{\langle t q^{\beta_k} x_i y_k,  tq x_i/y_k \rangle_{ \alpha_i}} 
\sum_{0 \le \mu^{-} \le \nu^{-} \le \nu^{+} \le \mu^{+} \le \alpha}
\Biggl( (-1)^{|\alpha|+|\nu^+| + |\nu^-|}
e(u; \sqrt{tq}/a_0)_{|\nu^+|-|\nu^-|} \nonumber \\
&\cdot \prod_{1 \le i \le m} 
\frac{\langle a_1 x_i, a_2 x_i, a_3 x_i, a_4 x_i\rangle_{\nu_i^{-}}}
{\langle tq x_i/a_1, tq x_i/a_2, tq x_i/a_3, tq x_i/a_4\rangle_{\nu_i^{+}}} 
\prod_{1 \le i \le j \le m}
\frac{\langle q^{\mu_i^- + \mu_j^-} x_i x_j, t^2q^{\mu_i^+ + \mu_j^+} x_i x_j\rangle}
{\langle x_i x_j, t^2 x_i x_j\rangle}
\nonumber \\
&\cdot \prod_{1 \le i < j \le m}
\frac{\langle q^{\mu_i^- - \mu_j^-} x_i/x_j, q^{\mu_i^+ - \mu_j^+} x_i/x_j,
q^{\nu_i^- - \nu_j^-} x_i/x_j, q^{\nu_i^+ - \nu_j^+} x_i/x_j\rangle
\langle tx_i x_j, qx_i x_j\rangle_{\nu_i^- + \nu_j^-}}
{\langle x_i/x_j, x_i/x_j, x_i/x_j, x_i/x_j\rangle 
\langle  t^2x_i x_j, tqx_i x_j \rangle_{\nu_i^+ + \nu_j^+}} \nonumber \\
&\cdot \prod_{1 \le i, j \le m}
\frac{\langle tq^{\mu_i^- + \mu_j^+} x_i x_j,
tq^{\nu_i^- + \nu_j^+} x_i x_j, q^{\mu_i^- - \mu_j^+} x_i/tx_j \rangle
\langle  tx_i x_j \rangle_{\mu_i^- + \nu_j^+}
\langle  t^2 x_i x_j \rangle_{\mu_i^+ + \nu_j^+}}
{\langle tx_i x_j,  tx_i x_j, x_i/tq^{\mu_j^+}x_j\rangle
\langle  qx_i x_j \rangle_{\mu_i^-+ \nu_j^-}
\langle  tqx_i x_j \rangle_{\mu_i^+ + \nu_j^-}} \nonumber \\
&\cdot \prod_{1 \le i, j \le m}
\frac{\langle  x_i x_j, x_i/q^{\nu_j^-}x_j , x_i/tq^{\alpha_j}x_j \rangle_{\mu_i^-}
\langle  tx_i x_j ,x_i/q^{\alpha_j}x_j \rangle_{\mu_i^+}
\langle  qx_i/tq^{\nu_j^+}x_j , x_i/q^{\nu_j^+}x_j \rangle_{\nu_i^-}
}
{\langle tq^{\alpha_j+1}x_i x_j , qx_i/tq^{\nu_j^+}x_j, qx_i/x_j \rangle_{\mu_i^-}
\langle t^2q^{\alpha_j+1} x_i x_j , qx_i/x_j \rangle_{\mu_i^+}
\langle qx_i/tq^{\mu_j^+}x_j, qx_i/x_j \rangle_{\nu_i^-}
}
\nonumber \\
&\cdot \prod_{1 \le i, j \le m}
\frac{\langle  x_i/q^{\mu_j^+}x_j \rangle_{\nu_i^+}}
{\langle qx_i/ x_j \rangle_{\nu_i^+}}
 \prod_{\substack{1\le i \le m\\ 1 \le k \le n}}
\frac{\langle q^{\beta_k} x_i y_k, q x_i/y_k \rangle_{\mu_i^-}
\langle t q^{\beta_k} x_i y_k,  tq x_i/y_k \rangle_{\mu_i^+}}
{\langle x_i y_k, q^{1-\beta_k} x_i/y_k \rangle_{\mu_i^-}
\langle tx_i y_k,  tq^{1-\beta_k} x_i/y_k \rangle_{\mu_i^+}} \Biggr)
\nonumber \\
&= e(u; \sqrt{tq}/a_0)_{|\alpha|-|\beta|} 
\prod_{1 \le k \le n} 
\frac{\langle a_1y_k, a_2y_k, a_3y_k, a_4y_k \rangle_{\beta_k}}
{\langle y_k^2, ty_k^2\rangle_{\beta_k}}
\prod_{1 \le k < l \le n}
\frac{\langle q^{\beta_l}y_k y_l,  tq^{\beta_l}y_k y_l\rangle_{\beta_k}}
{\langle y_k y_l,  ty_k y_l\rangle_{\beta_k}} \nonumber \\
&\cdot \prod_{\substack{1 \le k \le n \\ 1 \le i \le m}}
\frac{\langle ty_k x_i,  q^{-\alpha_i}y_k/x_i \rangle_{\beta_k}}
{\langle tq^{\alpha_i}y_kx_i, y_k/x_i \rangle_{\beta_k}}
\sum_{0 \le \lambda^{-} \le \kappa^{-} \le \kappa^{+} \le \lambda^{+} \le \beta}
\Biggl( (-1)^{|\beta|+|\kappa^+| + |\kappa^-|}
e(u; \sqrt{q/t} a_0)_{|\kappa^+|-|\kappa^-|}  \nonumber \\
&\cdot
\prod_{1 \le k \le n} \frac{\langle y_k/a_1, y_k/a_2,
y_k/a_3, y_k/a_4 \rangle_{\kappa_k^{-}}} 
{\langle a_1y_k, a_2y_k, a_3y_k, a_4y_k
\rangle_{\kappa_k^{+}}} 
\prod_{1 \le k \le l \le n}
\frac{\langle q^{\lambda_k^- + \lambda_l^-} y_k y_l/tq,
tq^{\lambda_k^+ + \lambda_l^+} y_k y_l/q\rangle}
{\langle y_k y_l/tq, t y_k y_l/q\rangle} \nonumber \\
&\cdot \prod_{1 \le k < l \le n}
\frac{\langle q^{\lambda_k^- - \lambda_l^-} y_k/y_l,
q^{\lambda_k^+ - \lambda_l^+} y_k/y_l,
 q^{\kappa_k^- - \kappa_l^-} y_k/y_l,
q^{\kappa_k^+ - \kappa_l^+} y_k/y_l\rangle
\langle  y_k y_l/q, y_k y_l/t \rangle_{\kappa_k^- + \kappa_l^-}}
{\langle y_k/y_l,y_k/y_l,y_k/y_l,y_k/y_l\rangle
\langle ty_k y_l/q, y_k y_l \rangle_{\kappa_k^+ + \kappa_l^+}} \nonumber \\
&\cdot \prod_{1 \le k, l \le n}
\frac{\langle q^{\lambda_k^- + \lambda_l^+} y_k y_l/q,
q^{\kappa_k^- + \kappa_l^+} y_k y_l/q, 
q^{\lambda_k^- - \lambda_l^+} y_k/ty_l\rangle
\langle  y_k y_l/q \rangle_{\lambda_k^- + \kappa_l^+}
\langle  ty_k y_l/q \rangle_{\lambda_k^+ + \kappa_l^+}}
{\langle y_k y_l/q,  y_k y_l/q,  y_k/tq^{\lambda_l^+}y_l \rangle
\langle  y_k y_l/t \rangle_{\lambda_k^-+ \kappa_l^-}
\langle  y_k y_l \rangle_{\lambda_k^+ + \kappa_l^-}} \nonumber \\
&\cdot \prod_{1 \le k, l \le n}
\frac{\langle  y_k y_l/tq, y_k/q^{\kappa_l^-}y_l , y_k/tq^{\beta_l}y_l \rangle_{\lambda_k^-}
\langle  y_k y_l/q,  y_k/q^{\beta_l}y_l \rangle_{\lambda_k^+}
\langle  qy_k/tq^{\kappa_l^+}y_l , y_k/q^{\kappa_l^+}y_l \rangle_{\kappa_k^-}
}
{\langle  q^{\beta_l}y_k y_l, qy_k/tq^{\kappa_l^+}y_l, qy_k/ y_l \rangle_{\lambda_k^-}
\langle  tq^{\beta_l} y_k y_l, qy_k/ y_l  \rangle_{\lambda_k^+}
\langle qy_k/tq^{\lambda_l^+}y_l, qy_k/ y_l \rangle_{\kappa_k^-}
}  \nonumber \\
&\cdot
\prod_{1 \le k, l \le n}
\frac{\langle  y_k/q^{\lambda_l^+}y_l \rangle_{\kappa_k^+}}
{\langle qy_k/ y_l \rangle_{\kappa_k^+}}
\prod_{\substack{ 1 \le k \le n \\ 1\le i \le m}}
\frac{\langle q^{\alpha_i} y_k x_i ,  y_k/tx_i \rangle_{\lambda_k^-}
\langle tq^{\alpha_i}y_kx_i, y_k/x_i \rangle_{\lambda_k^+}}
{\langle y_kx_i , y_k/tq^{\alpha_i}x_i \rangle_{\lambda_k^-}
\langle ty_k x_i,  q^{-\alpha_i}y_k/x_i \rangle_{\lambda_k^+}} \Biggr). \label{eq:principal}
\end{align}
\end{theo}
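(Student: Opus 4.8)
The plan is to derive the identity (\ref{eq:principal}) entirely from the set-theoretic kernel identity of Theorem \ref{Theo:set ver} by applying a multiple principal specialization to the variables. Concretely, I would first reorganize both sides of (\ref{eq:main theo q}) so that the sign vector $\epsilon$ and the inner summation index of $U$ (resp. $\widetilde U$) are absorbed into a decomposition of the ground set $\{1,\dots,m\}$ into five blocks $I^+\sqcup I^-\sqcup J^+\sqcup J^-\sqcup C$, and symmetrically on the $y$-side; this is exactly the reorganization producing (\ref{eq:before p.s.}). I would then substitute $x=p_\alpha(z;q)$ and $y=p_\beta(w;q)$, i.e. replace each variable by a point of a geometric progression with ratio $q$.

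The crucial step is the combinatorial collapse of the summation. Since $\langle 1\rangle = 1^{1/2}-1^{-1/2}=0$, every factor of the form $\langle q x_i/x_{i+1}\rangle$ vanishes when $x_i,x_{i+1}$ are consecutive entries $q^{a}z,\,q^{a+1}z$ of the same block. Tracking which factors $\langle q x_i^{\epsilon_i} x_j^{\pm 1}\rangle$ sit in the numerators of (\ref{eq:before p.s.}), one checks that a term survives the specialization only if, within each block, the five sets occupy consecutive segments in the order $I^-,J^-,C,J^+,I^+$. This reduces the enormous sum over subsets-with-signs to a sum over four nested multi-indices $0\le\mu^-\le\nu^-\le\nu^+\le\mu^+\le\alpha$ recording the block boundaries, which is precisely the index set of the sum in (\ref{eq:after p.s.}).

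With the summation range pinned down, what remains is the explicit evaluation of each specialized product. Here I would collapse the intra-block products by telescoping them into the $q$-shifted factorials $\langle\,\cdot\,\rangle_{q,k}$, using the elementary product manipulations of Appendix I of \cite{GR}, and I would invoke Milne's lemma \cite[Lemma 6.11]{Mil} to convert the off-diagonal product $\prod_{i\ne j}\langle q x_i/x_j\rangle_{\lambda_i}/\langle x_i/q^{\lambda_j}x_j\rangle_{\lambda_i}$ over pairs of blocks into the displayed product over $i<j$. Carrying out the identical computation on the right-hand side yields $F^{(N,M)}_{\beta,\alpha}$, so Theorem \ref{Theo:set ver} gives the duality $F^{(M,N)}_{\alpha,\beta}=e(u;\sqrt{q/t}\alpha)_{M-N}F^{(N,M)}_{\beta,\alpha}$; relabelling $M,N\to m,n$ and substituting $z_i\to\sqrt{tq}\,x_i$, $w_k\to y_k$, and $(a,b,c,d)\to(\sqrt{tq}/a_1,\dots,\sqrt{tq}/a_4)$ then produces (\ref{eq:principal}).

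The main obstacle is this last, purely computational, bookkeeping. The many factors in (\ref{eq:before p.s.})---the $w$-type weights, the pairwise $v$-factors inside $I^\pm$ and $J^\pm$, the cross-products between distinct blocks, and the kernel factors $F$---must each be tracked through the block parametrization, and the exponents of $q$ in every base point (for instance the shifts $q^{\mu_i^-\pm\mu_j^+}$, $q^{\alpha_j}$, $q^{\beta_k}$ appearing in the cross-terms $\prod_{1\le i,j\le M}$) must come out exactly right after telescoping. Pinning down the correct normalizations together with the sign $(-1)^{|\alpha|+|\nu^+|+|\nu^-|}$, and applying Milne's lemma in precisely the right places, is where the derivation is most error-prone; everything else is a direct consequence of Theorem \ref{Theo:set ver}.
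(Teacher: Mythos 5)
Your proposal is correct and follows essentially the same route as the paper's own derivation: reorganizing (\ref{eq:main theo q}) into the five-block decomposition of (\ref{eq:before p.s.}), applying the multiple principal specializations $x=p_\alpha(z;q)$, $y=p_\beta(w;q)$ so that the vanishing of $\langle qx_i/x_{i+1}\rangle$ collapses the sum to the nested multi-indices $0\le\mu^-\le\nu^-\le\nu^+\le\mu^+\le\alpha$, evaluating the specialized products via the formulas of Appendix I of \cite{GR} and Milne's lemma, and then relabeling and substituting $z_i\to\sqrt{tq}\,x_i$, $(a,b,c,d)\to(\sqrt{tq}/a_1,\ldots,\sqrt{tq}/a_4)$ in the resulting duality $F^{(M,N)}_{\alpha,\beta}=e(u;\sqrt{q/t}\alpha)_{M-N}F^{(N,M)}_{\beta,\alpha}$. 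The only caveat is the one you acknowledge yourself: the explicit bookkeeping of exponents and the sign $(-1)^{|\alpha|+|\nu^+|+|\nu^-|}$, which the paper carries out in (\ref{eq:after p.s.}).
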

We give some remarks of Theorem \ref{Theo:trans form BC}.
As the special case $n=0$,
we obtain the following summation formula:
\begin{align}
&\sum_{0 \le \mu^{-} \le \nu^{-} \le \nu^{+} \le \mu^{+} \le \alpha}
\Biggl( (-1)^{|\alpha|+|\nu^+| + |\nu^-|}
e(u; \sqrt{tq}/a_0)_{|\nu^+|-|\nu^-|} \nonumber \\
&\cdot \prod_{1 \le i \le m} 
\frac{\langle a_1 x_i, a_2 x_i, a_3 x_i, a_4 x_i\rangle_{\nu_i^{-}}}
{\langle tq x_i/a_1, tq x_i/a_2, tq x_i/a_3, tq x_i/a_4\rangle_{\nu_i^{+}}} 
\prod_{1 \le i \le j \le m}
\frac{\langle q^{\mu_i^- + \mu_j^-} x_i x_j, t^2q^{\mu_i^+ + \mu_j^+} x_i x_j\rangle}
{\langle x_i x_j, t^2 x_i x_j\rangle}
\nonumber \\
&\cdot \prod_{1 \le i < j \le m}
\frac{\langle q^{\mu_i^- - \mu_j^-} x_i/x_j, q^{\mu_i^+ - \mu_j^+} x_i/x_j,
q^{\nu_i^- - \nu_j^-} x_i/x_j, q^{\nu_i^+ - \nu_j^+} x_i/x_j\rangle
\langle tx_i x_j, qx_i x_j\rangle_{\nu_i^- + \nu_j^-}}
{\langle x_i/x_j, x_i/x_j, x_i/x_j, x_i/x_j\rangle 
\langle  t^2x_i x_j, tqx_i x_j \rangle_{\nu_i^+ + \nu_j^+}} \nonumber \\
&\cdot \prod_{1 \le i, j \le m}
\frac{\langle tq^{\mu_i^- + \mu_j^+} x_i x_j,
tq^{\nu_i^- + \nu_j^+} x_i x_j, q^{\mu_i^- - \mu_j^+} x_i/tx_j \rangle
\langle  tx_i x_j \rangle_{\mu_i^- + \nu_j^+}
\langle  t^2 x_i x_j \rangle_{\mu_i^+ + \nu_j^+}
\langle  x_i/q^{\mu_j^+}x_j \rangle_{\nu_i^+}}
{\langle tx_i x_j,  tx_i x_j, x_i/tq^{\mu_j^+}x_j\rangle
\langle  qx_i x_j \rangle_{\mu_i^-+ \nu_j^-}
\langle  tqx_i x_j \rangle_{\mu_i^+ + \nu_j^-}
\langle qx_i/ x_j \rangle_{\nu_i^+}} \nonumber \\
&\cdot \prod_{1 \le i, j \le m}
\frac{\langle  x_i x_j, x_i/q^{\nu_j^-}x_j , x_i/tq^{\alpha_j}x_j \rangle_{\mu_i^-}
\langle  tx_i x_j ,x_i/q^{\alpha_j}x_j \rangle_{\mu_i^+}
\langle  qx_i/tq^{\nu_j^+}x_j , x_i/q^{\nu_j^+}x_j \rangle_{\nu_i^-}
}
{\langle tq^{\alpha_j+1}x_i x_j , qx_i/tq^{\nu_j^+}x_j, qx_i/x_j \rangle_{\mu_i^-}
\langle t^2q^{\alpha_j+1} x_i x_j , qx_i/x_j \rangle_{\mu_i^+}
\langle qx_i/tq^{\mu_j^+}x_j, qx_i/x_j \rangle_{\nu_i^-}
} \Biggr)
\nonumber \\
&= e(u; \sqrt{tq}/a_0)_{|\alpha|} 
\prod_{1 \le i \le m} \frac{\langle tqx_i^2, t^2qx_i^2\rangle_{\alpha_i}}
{\langle tq x_i/a_1, tqx_i/a_2, tqx_i/a_3, tqx_i/a_4 \rangle_{\alpha_i}}
 \prod_{1 \le i < j \le m}\frac
{\langle tqx_i x_j,  t^2qx_i x_j\rangle_{\alpha_i}} 
{\langle tq^{\alpha_j+1}x_i x_j,  t^2q^{\alpha_j+1}x_ix_j\rangle_{\alpha_i}}. 
\end{align}

Note that the $q$-$\text{Saalsch}\ddot{\text{u}}\text{tz}$ sum (\ref{eq:Saal}) implies
\begin{align}
&e(u; \sqrt{tq}/a_0)_{|\alpha|-|\beta|}
e(u; \sqrt{q/t}a_0)_{|\kappa^+|-|\kappa^-|}\nonumber \\
&= \sum_{r=0}^{|\kappa^+|-|\kappa^-|} \Biggl(
\frac{\langle q^{|\kappa^-|-|\kappa^+|} \rangle_{r}}{\langle q \rangle_{r}}
e(t^{-\frac{1}{2}}q^{\frac{1}{2}(|\kappa^+|-|\kappa^-|)}a_0;
t^{-\frac{1}{2}}
q^{\frac{1}{2}(|\kappa^-|-|\kappa^+|-2|\alpha|+2|\beta|)}a_0)_{r} \nonumber \\
& \cdot e(u;\sqrt{q/t}/a_0)_{|\alpha|-|\beta|+|\kappa^+|-|\kappa^-|-r} \Biggr).
\end{align}
Hence,
the right-hand side of (\ref{eq:principal}) is also expressed as
\begin{align}
&\prod_{1 \le k \le n} 
\frac{\langle a_1y_k, a_2y_k, a_3y_k, a_4y_k \rangle_{\beta_k}}
{\langle y_k^2, ty_k^2\rangle_{\beta_k}}
\prod_{1 \le k < l \le n}
\frac{\langle q^{\beta_l}y_k y_l,  tq^{\beta_l}y_k y_l\rangle_{\beta_k}}
{\langle y_k y_l,  ty_k y_l\rangle_{\beta_k}}
\prod_{\substack{1 \le k \le n \\ 1 \le i \le m}}
\frac{\langle ty_k x_i,  q^{-\alpha_i}y_k/x_i \rangle_{\beta_k}}
{\langle tq^{\alpha_i}y_kx_i, y_k/x_i \rangle_{\beta_k}} \nonumber \\
&\cdot \sum_{0 \le \lambda^{-} \le \kappa^{-} \le \kappa^{+} \le \lambda^{+} \le \beta}
\Biggl(
\sum_{r=0}^{|\kappa^+|-|\kappa^-|}\Biggl(
(-1)^{|\beta|+|\kappa^+| + |\kappa^-|} 
\frac{\langle q^{|\kappa^-|-|\kappa^+|} \rangle_{r}}{\langle q \rangle_{r}} \nonumber \\
&\cdot e(t^{-\frac{1}{2}}q^{\frac{1}{2}(|\kappa^+|-|\kappa^-|)}a_0;
t^{-\frac{1}{2}}q^{\frac{1}{2}(|\kappa^-|-|\kappa^+|-2|\alpha|+2|\beta|)} a_0)_{r}
e(u;\sqrt{tq}/a_0)_{|\alpha|-|\beta|+|\kappa^+|-|\kappa^-|-r} \nonumber \\
&\cdot \prod_{1 \le k \le n} \frac{\langle y_k/a_1, y_k/a_2,
y_k/a_3, y_k/a_4 \rangle_{\kappa_k^{-}}} 
{\langle a_1y_k, a_2y_k, a_3y_k, a_4y_k
\rangle_{\kappa_k^{+}}} 
\prod_{1 \le k \le l \le n}
\frac{\langle q^{\lambda_k^- + \lambda_l^- } y_k y_l/tq,
tq^{\lambda_k^+ + \lambda_l^+ } y_k y_l/q\rangle}
{\langle y_k y_l/tq, t y_k y_l/q\rangle} \nonumber \\
&\cdot \prod_{1 \le k < l \le n}
\frac{\langle q^{\lambda_k^- - \lambda_l^-} y_k/y_l,
q^{\lambda_k^+ - \lambda_l^+} y_k/y_l,
 q^{\kappa_k^- - \kappa_l^-} y_k/y_l,
q^{\kappa_k^+ - \kappa_l^+} y_k/y_l\rangle
\langle  y_k y_l/q, y_k y_l/t \rangle_{\kappa_k^- + \kappa_l^-}}
{\langle y_k/y_l,y_k/y_l,y_k/y_l,y_k/y_l\rangle
\langle ty_k y_l/q, y_k y_l \rangle_{\kappa_k^+ + \kappa_l^+}} \nonumber \\
&\cdot \prod_{1 \le k, l \le n}
\frac{\langle q^{\lambda_k^- + \lambda_l^+} y_k y_l/q,
q^{\kappa_k^- + \kappa_l^+} y_k y_l/q, 
q^{\lambda_k^- - \lambda_l^+} y_k/ty_l\rangle
\langle  y_k y_l/q \rangle_{\lambda_k^- + \kappa_l^+}
\langle  ty_k y_l/q \rangle_{\lambda_k^+ + \kappa_l^+}}
{\langle y_k y_l/q,  y_k y_l/q,  y_k/tq^{\lambda_l^+}y_l \rangle
\langle  y_k y_l/t \rangle_{\lambda_k^-+ \kappa_l^-}
\langle  y_k y_l \rangle_{\lambda_k^+ + \kappa_l^-}} \nonumber \\
&\cdot \prod_{1 \le k, l \le n}
\frac{\langle  y_k y_l/tq, y_k/q^{\kappa_l^-}y_l , y_k/tq^{\beta_l}y_l \rangle_{\lambda_k^-}
\langle  y_k y_l/q,  y_k/q^{\beta_l}y_l \rangle_{\lambda_k^+}
\langle  qy_k/tq^{\kappa_l^+}y_l , y_k/q^{\kappa_l^+}y_l \rangle_{\kappa_k^-}
\langle  y_k/q^{\lambda_l^+}y_l \rangle_{\kappa_k^+}}
{\langle  q^{\beta_l}y_k y_l, qy_k/tq^{\kappa_l^+}y_l, qy_k/ y_l \rangle_{\lambda_k^-}
\langle  tq^{\beta_l} y_k y_l, qy_k/ y_l  \rangle_{\lambda_k^+}
\langle qy_k/tq^{\lambda_l^+}y_l, qy_k/ y_l \rangle_{\kappa_k^-}
\langle qy_k/ y_l \rangle_{\kappa_k^+}}  \nonumber \\
&\cdot
\prod_{\substack{ 1 \le k \le n \\ 1\le i \le m}}
\frac{\langle q^{\alpha_i} y_k x_i ,  y_k/tx_i \rangle_{\lambda_k^-}
\langle tq^{\alpha_i}y_kx_i, y_k/x_i \rangle_{\lambda_k^+}}
{\langle y_kx_i , y_k/tq^{\alpha_i}x_i \rangle_{\lambda_k^-}
\langle ty_k x_i,  q^{-\alpha_i}y_k/x_i \rangle_{\lambda_k^+}}\Biggr) \Biggr). \label{eq:r.h.s of principal}
\end{align}

\subsection{Type $C$ case}
\label{subsec:type C}
In the previous subsection, we derived the transformation formula of type $BC$
by a method of principal specializations.
In this subsection, we apply the same method to derive another type of transformation formula.
We specialize in advance the parameters of (\ref{eq:main theo q})
so that $(c, d, t) = (q^{1/2}, -q^{1/2}, q)$:
\begin{align}
&\sum_{\substack{I \subset \{ 1, \ldots , m\} \\ \epsilon_i = \pm1 (i \in I)}} 
\Biggl( (-1)^{|I|}
\prod_{i \in I} \frac{\sqrt{-1}
\langle ax_i^{\epsilon_i}, bx_i^{\epsilon_i} \rangle}
{\langle x_i^{2\epsilon_i}\rangle} 
\prod_{\substack{i, j \in I\\ i < j}}
\frac{\langle q^2 x_i^{\epsilon_i} x_{j}^{\epsilon_{j}}\rangle}
{\langle x_i^{\epsilon_i}x_{j}^{\epsilon_{j}}\rangle}
\prod_{\substack{i \in I\\ j \in I^c}}
\frac{\langle q x_i^{\epsilon_i} x_{j}^{\pm 1}\rangle}{\langle x_i^{\epsilon_i}x_{j}^{\pm 1} \rangle}
\prod_{\substack{i \in I \\ 1 \le k \le n}} \frac{\langle x_i^{\epsilon_i}y_k^{\pm 1} \rangle}
{\langle qx_i^{\epsilon_i}y_k^{\pm 1}\rangle} \nonumber \\
& \cdot \sum_{\substack{ J \subset I^c \\  \delta_i = \pm1 (i \in J)}} 
e(u; \alpha)_{|I^c|-|J|} 
\prod_{i \in J} 
\frac{\sqrt{-1} \langle a x_i^{\delta_i}, b x_i^{\delta_i} \rangle}
{\langle x_i^{2\delta_i}\rangle} 
\prod_{\substack{i \in J\\ j \in I^c\backslash J}} 
\frac{\langle q x_i^{\delta_i} x_{j}^{\pm 1}\rangle}
{\langle x_i^{\delta_i}x_{j}^{\pm 1} \rangle} \Biggr)  \nonumber \\
&=e(u; \alpha)_{m-n}
\sum_{\substack{K \subset \{ 1, \ldots , n\} \\ \epsilon_k=\pm 1 (k \in K)}} 
\Biggl( (-1)^{|K|}\prod_{k \in K}
\frac{-\sqrt{-1}
\langle qy_k^{\epsilon_k}/a, qy_k^{\epsilon_k}/b \rangle}
{\langle y_k^{2\epsilon_k}\rangle} 
\prod_{\substack{k, l \in K\\ k < l}} 
\frac{\langle q^2y_k^{\epsilon_k} y_{l}^{\epsilon_{l}}\rangle}
{\langle y_k^{\epsilon_k}y_{l}^{\epsilon_{l}}\rangle}
\prod_{\substack{k \in K\\ l \in K^c}}
\frac{\langle q y_k^{\epsilon_k} y_{l}^{\pm 1}\rangle}{\langle y_k^{\epsilon_k}y_{l}^{\pm 1} \rangle}\nonumber \\ 
&\cdot
\prod_{\substack{k \in K \\ 1 \le i \le m}} 
\frac{\langle y_k^{\epsilon_k}x_i^{\pm1} \rangle}
{\langle qy_k^{\epsilon_k} x_i^{\pm1}\rangle}
\sum_{\substack{ L \subset K^c \\  \delta_k=\pm 1 (k \in L)}} 
e(u; q/\alpha)_{|K^c|-|L|}
\prod_{k \in L} 
\frac{-\sqrt{-1}\langle qy_k^{\delta_k}/a, qy_k^{\delta_k}/b\rangle}
{\langle y_k^{2\delta_k}\rangle}
\prod_{\substack{k \in L\\ l \in K^c\backslash L}} 
\frac{\langle q y_k^{\delta_k} y_{l}^{\pm 1}\rangle}
{\langle y_k^{\delta_k}y_{l}^{\pm 1} \rangle}\Biggr) 
\label{eq:t=q on main theo}.
\end{align}
Note that $\alpha = \sqrt{-ab}$.
In this setting, the internal sum of each side simplifies drastically.
\begin{lemm}
\begin{align}
&\sum_{\substack{ J \subset I^c \\  \delta_i = \pm1 (i \in J)}} 
e(u; \alpha)_{|I^c|-|J|} 
\prod_{i \in J} 
\frac{\sqrt{-1} \langle a x_i^{\delta_i}, b x_i^{\delta_i} \rangle}
{\langle x_i^{2\delta_i}\rangle} 
\prod_{\substack{i \in J\\ j \in I^c\backslash J}} 
\frac{\langle q x_i^{\delta_i} x_{j}^{\pm 1}\rangle}
{\langle x_i^{\delta_i}x_{j}^{\pm 1} \rangle} 
=\left(u+\frac{1}{u} \right)^{|I^c|}, \\
&\sum_{\substack{ L \subset K^c \\  \delta_k = \pm1 (k \in L)}} 
e(u; q/\alpha)_{|K^c|-|L|}
\prod_{k \in L} 
\frac{-\sqrt{-1}\langle qy_k^{\delta_k}/a, qy_k^{\delta_k}/b\rangle}
{\langle y_k^{2\delta_k}\rangle}
\prod_{\substack{k \in L\\ l \in K^c\backslash L}} 
\frac{\langle q y_k^{\delta_k} y_{l}^{\pm 1}\rangle}
{\langle y_k^{\delta_k}y_{l}^{\pm 1} \rangle}
 = \left(u+\frac{1}{u} \right)^{|K^c|}.
\end{align}
\end{lemm}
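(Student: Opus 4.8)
The plan is to recognize both sums as the quantity $U_J(u;x)$ of (\ref{eq:def U}) specialized at $(c,d,t)=(q^{1/2},-q^{1/2},q)$, with $J=I^c$ (resp. $J=K^c$), and to prove the single identity $U_J(u;x)=(u+u^{-1})^{|J|}$. Both displayed equalities follow: the first is this statement for the parameters $(a,b,\alpha)$, and the second is the very same statement for the dual parameters $(\widetilde a,\widetilde b)=(q/a,q/b)$ and $\widetilde\alpha=q/\alpha$. First I would record the two simplifications forced by $t=q$. Using $\langle w\rangle\langle -w\rangle=\sqrt{-1}\,\langle w^2\rangle$ with $w=q^{1/2}z$, the factor $\langle cz\rangle\langle dz\rangle=\langle q^{1/2}z\rangle\langle -q^{1/2}z\rangle=\sqrt{-1}\,\langle qz^2\rangle$ cancels the $\langle qz^2\rangle$ in the denominator of $w(z)$, giving $w(z)=\sqrt{-1}\,\langle az,bz\rangle/\langle z^2\rangle$; and since $\langle z^{-1}\rangle=-\langle z\rangle$, the paired factor $v(x_i^{\delta_i}x_j^{\delta_j})\,v(q^{-1}x_i^{-\delta_i}x_j^{-\delta_j})$ equals $1$, which is why the $i<j$ product inside $J$ disappears. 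The two properties of the specialized $w$ that drive the argument are checked by a one-line computation: $w(z)+w(z^{-1})=\alpha+\alpha^{-1}$ (with $\alpha=\sqrt{-ab}$), and $\lim_{z\to\infty}w(z)=\alpha$, $\lim_{z\to\infty}w(z^{-1})=\alpha^{-1}$.

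Next I would induct on $N=|J|$. For $N=0$ the sum is $e(u;\alpha)_{q,0}=1$, and for $N=1$ it is $e(u;\alpha)+w(z)+w(z^{-1})=e(u;\alpha)+\alpha+\alpha^{-1}=u+u^{-1}$. For the inductive step, fix a distinguished index $p\in J$ and write $J=J'\sqcup\{p\}$ with $N'=|J'|=N-1$. Regarding $U_J(u;x)$ as a function of $x_p$ with all other variables fixed, it is invariant under $x_p\mapsto x_p^{-1}$ and is rational in $x_p$ with at worst simple poles at $x_p=\pm1$ (coming from the denominators $\langle x_p^{2\delta_p}\rangle$ of $w$) and at $x_p=x_j^{\pm1}$ for $j\in J'$ (from the denominators of the $v$-factors). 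The core step is to show that all of these residues vanish: pairing the summand indexed by $I$ with that indexed by $I\cup\{p\}$ (respectively splitting by $\delta_p=\pm1$ at $x_p=\pm1$), the residues cancel by exactly the mechanism used in the residue analysis of Theorem \ref{Theo:Main}. Consequently $U_J(u;x)$ is regular on $\mathbb{C}^{\times}$ and has finite limits at $0$ and $\infty$, so it is constant in $x_p$ and equals its limit as $x_p\to\infty$.

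Finally I would evaluate that limit. Splitting the sum according to whether $p\in I$ and factoring out the common factor $C(I')$ attached to $I'=I\setminus\{p\}\subseteq J'$, which is precisely the $I'$-summand of $U_{J'}(u;x)$, the remaining $x_p$-dependent bracket tends, using the limits of $w$ above and $v(z)\to q^{1/2}$ as $z\to\infty$, $v(z)\to q^{-1/2}$ as $z\to 0$, to
\[
e(u;q^{N'-|I'|}\alpha)+\alpha\,q^{\,N'-|I'|}+\alpha^{-1}q^{-(N'-|I'|)}=u+u^{-1},
\]
independently of $I'$, since $e(u;q^{s}\alpha)=u+u^{-1}-q^{s}\alpha-q^{-s}\alpha^{-1}$. (Here the $p\notin I$ terms supply the extra factor $e(u;\alpha)_{q,N'-|I'|+1}/e(u;\alpha)_{q,N'-|I'|}=e(u;q^{N'-|I'|}\alpha)$ while $\prod_{i\in I'}v(x_i^{\delta_i}x_p^{\pm1})\to1$, and the $p\in I$ terms with $\delta_p=\pm1$ supply $\alpha\,q^{N'-|I'|}$ and $\alpha^{-1}q^{-(N'-|I'|)}$.) Hence $\lim_{x_p\to\infty}U_J(u;x)=(u+u^{-1})\,U_{J'}(u;x)$, and the induction hypothesis $U_{J'}(u;x)=(u+u^{-1})^{N-1}$ gives $U_J(u;x)=(u+u^{-1})^{N}$. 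The main obstacle is the residue cancellation in the inductive step; the limit computation and the base cases are routine bookkeeping.
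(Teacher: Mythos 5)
Your overall strategy is the paper's own: the paper proves this lemma with a one-line deferral to the residue analysis of Theorem \ref{Theo:Main}, which is exactly the scheme you spell out (induction on the number of variables; all poles of $U_J(u;x)$ in a distinguished variable $x_p$ are apparent; Liouville's argument plus evaluation of the limit $x_p\to\infty$). Your $t=q$ simplifications of $w$ and of the paired $v$-factors, the base cases, the reduction of the second identity to the first via the dual parameters, and the limit computation $\lim_{x_p\to\infty}U_J(u;x)=(u+u^{-1})\,U_{J'}(u;x)$ are all correct.

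The one misstep is in the parenthetical describing what you yourself call the core step. For the poles at $x_p=x_j^{\pm1}$, $j\in J'$, you propose to pair the summand indexed by $I$ with the summand indexed by $I\cup\{p\}$. That pairing cannot produce a cancellation: the two summands carry different prefactors, $e(u;\alpha)_{q,|J|-|I|}$ versus $e(u;\alpha)_{q,|J|-|I|-1}$, and in fact a summand whose index set contains \emph{both} $p$ and $j$ has no pole at $x_p=x_j^{\pm1}$ at all, precisely because (as you note) the internal factors $v(x_i^{\delta_i}x_j^{\delta_j})\,v(q^{-1}x_i^{-\delta_i}x_j^{-\delta_j})$ degenerate to $1$ when $t=q$. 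The correct pairing is the transposition of $p$ and $j$: the summand $(I,\delta)$ with $p\in I$, $j\notin I$, $\delta_p=\epsilon$ cancels against the summand indexed by $(I\setminus\{p\})\cup\{j\}$ with $\delta_j=\epsilon$ and all other signs unchanged. These two have the same cardinality, hence the same $e(u;\alpha)_{q,|J|-|I|}$ prefactor; at $x_p=x_j$ all their nonsingular factors coincide, while
$\mathrm{Res}_{x_p=x_j}\,v(x_p x_j^{-1})=\langle q\rangle\,x_j=-\,\mathrm{Res}_{x_p=x_j}\,v(x_j x_p^{-1})$,
so the residues are negatives of each other (the pole at $x_p=x_j^{-1}$ is handled the same way, with $\delta_j=-\epsilon$). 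Your pairing at $x_p=\pm1$ — cancelling $\delta_p=+1$ against $\delta_p=-1$ within the same $I$ — is correct as stated. With the pairing at $x_p=x_j^{\pm1}$ repaired, your argument goes through and coincides with the proof the paper intends.
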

We can prove this lemma in the same way as in
Theorem \ref{Theo:Main} by analyzing the residues.
From this lemma, if $u= \sqrt{-1}$,
(\ref{eq:t=q on main theo}) reduces to
\begin{align}
&\sum_{\substack{\epsilon_i = \pm1 \\ 1 \le i \le m}} 
(-1)^{m}
\prod_{1 \le i \le m} \frac{\sqrt{-1} \langle ax_i^{\epsilon_i}, bx_i^{\epsilon_i} \rangle}
{\langle x_i^{2\epsilon_i}\rangle} 
\prod_{1 \le i < j \le m}
\frac{\langle q^2 x_i^{\epsilon_i} x_{j}^{\epsilon_{j}}\rangle}
{\langle x_i^{\epsilon_i}x_{j}^{\epsilon_{j}}\rangle}
\prod_{\substack{1 \le i \le m \\ 1 \le k \le n}}
 \frac{\langle x_i^{\epsilon_i}y_k^{\pm 1} \rangle}
{\langle qx_i^{\epsilon_i}y_k^{\pm 1}\rangle} \nonumber \\
&=e(\sqrt{-1}; \alpha)_{m-n}
\sum_{\substack{ \epsilon_k=\pm 1 \\ 1 \le k \le n}} 
(-1)^{n}
\prod_{1 \le k \le n}\frac{-\sqrt{-1}
\langle qy_k^{\epsilon_k}/a, qy_k^{\epsilon_k}/b \rangle}
{\langle y_k^{2\epsilon_k}\rangle} 
\prod_{\substack{1 \le k < l \le n}} 
\frac{\langle q^2y_k^{\epsilon_k} y_{l}^{\epsilon_{l}}\rangle}
{\langle y_k^{\epsilon_k}y_{l}^{\epsilon_{l}}\rangle}
\prod_{\substack{1 \le k \le n \\ 1 \le i \le m}} 
\frac{\langle y_k^{\epsilon_k}x_i^{\pm1} \rangle}
{\langle qy_k^{\epsilon_k} x_i^{\pm1}\rangle}. \label{eq:q^2 ver}
\end{align}
Since $e(\sqrt{-1};\alpha)_{m-n} = (\sqrt{-1})^{m-n}(-1)^m
\frac{\langle ab \rangle_{q^2, m}}{\langle q^{2-2m}/ab \rangle_{q^2, n}}$,
(\ref{eq:q^2 ver}) is equal to
\begin{align}
&\sum_{\substack{\epsilon_i = \pm1 \\ 1 \le i \le m}} 
\prod_{1 \le i \le m} \frac{\langle ax_i^{\epsilon_i}, bx_i^{\epsilon_i} \rangle}
{\langle x_i^{2\epsilon_i}\rangle} 
\prod_{1 \le i < j \le m}
\frac{\langle q^2 x_i^{\epsilon_i} x_{j}^{\epsilon_{j}}\rangle}
{\langle x_i^{\epsilon_i}x_{j}^{\epsilon_{j}}\rangle}
\prod_{\substack{1 \le i \le m \\ 1 \le k \le n}}
 \frac{\langle x_i^{\epsilon_i}y_k^{\pm 1} \rangle}
{\langle qx_i^{\epsilon_i}y_k^{\pm 1}\rangle} \nonumber \\
&=\frac{\langle ab \rangle_{q^2, m}}{\langle q^{2-2m}/ab \rangle_{q^2, n}}
\sum_{\substack{ \epsilon_k=\pm 1 \\ 1 \le k \le n }} 
\prod_{1 \le k \le n}\frac{
\langle qy_k^{\epsilon_k}/a, qy_k^{\epsilon_k}/b \rangle}
{\langle y_k^{2\epsilon_k}\rangle} 
\prod_{\substack{1 \le k < l \le n}} 
\frac{\langle q^2y_k^{\epsilon_k} y_{l}^{\epsilon_{l}}\rangle}
{\langle y_k^{\epsilon_k}y_{l}^{\epsilon_{l}}\rangle}
\prod_{\substack{1 \le k \le n \\ 1 \le i \le m}} 
\frac{\langle y_k^{\epsilon_k}x_i^{\pm1} \rangle}
{\langle qy_k^{\epsilon_k} x_i^{\pm1}\rangle}. \label{eq:set ver C}
\end{align}
Let $I^-, I^+, K^-, K^+$ be 
\begin{equation}
\begin{split}
I^-= \{i | 1 \le i \le m, \epsilon_i = -1 \},\quad 
I^+= \{i | 1 \le i \le m, \epsilon_i = 1 \}, \\
K^-= \{k | 1 \le k \le n, \epsilon_k = -1 \},\quad 
K^+= \{k | 1 \le k \le n, \epsilon_k = 1 \}.
\end{split}
\end{equation}
Using this notation and
replacing the parameter $q$ with $q^{1/2}$,
we obtain
\begin{align}
&\sum_{\substack{I^- \sqcup I^+ \\=\{1, \ldots, m \}}} 
\Biggl(
\prod_{i \in I^+}
\frac{\langle ax_i, bx_i\rangle}
{\langle x_i^{2}\rangle} 
\prod_{i \in I^-}
\frac{-\langle x_i/a, x_i/b\rangle}
{\langle x_i^{2}\rangle}
\prod_{\substack{i, j \in I^+ \\ i<j}}
\frac{\langle q x_i x_j \rangle}
{\langle x_i x_j \rangle}
\prod_{\substack{i, j \in I^- \\ i<j}}
\frac{\langle x_i x_j /q\rangle}
{\langle x_i x_j \rangle} \nonumber \\
&\cdot \prod_{\substack{i \in I^+ \\ j \in I^-}}
\frac{\langle q x_i/x_j \rangle}
{\langle x_i/x_j \rangle}
\prod_{\substack{i \in I^- \\ 1 \le k \le n}}
 \frac{\langle \sqrt{q} x_i y_k^{\pm 1} \rangle}
{\langle x_i y_k^{\pm 1}/\sqrt{q}\rangle} \Biggr)
\nonumber \\
&=\frac{\langle ab \rangle_m}{\langle q^{1-m}/ab\rangle_n}
\prod_{\substack{1 \le i \le m \\ 1 \le k \le n}}
\frac{\langle y_k/\sqrt{q}x_i \rangle}{\langle \sqrt{q}y_k/x_i \rangle}
\sum_{\substack{K^- \sqcup K^+ \\ =\{1, \ldots, n \}}} 
\Biggl( \prod_{k \in K^+}\frac{
\langle \sqrt{q}y_k/a, \sqrt{q}y_k/b \rangle}
{\langle y_k^{2}\rangle}
\prod_{k \in K^-}
\frac{-\langle ay_k/\sqrt{q}, by_k/\sqrt{q} \rangle}
{\langle y_k^{2}\rangle} \nonumber \\
&\cdot \prod_{\substack{k, l \in K^+ \\ k < l}} 
\frac{\langle qy_k y_l\rangle}
{\langle y_k y_l\rangle}
\prod_{\substack{k, l \in K^- \\ k < l}} 
\frac{\langle y_k y_l/q \rangle}
{\langle y_k y_l \rangle}
\prod_{\substack{k \in K^+ \\ l \in K^-}} 
\frac{\langle qy_k/y_l\rangle}
{\langle y_k/y_l\rangle}
\prod_{\substack{k \in K^- \\ 1 \le i \le m}} 
\frac{\langle \sqrt{q} y_k x_i^{\pm1} \rangle}
{\langle y_k x_i^{\pm1}/\sqrt{q} \rangle}
\Biggr).
\end{align}
Specializing this formula as $x=p_{\alpha}(z;q), 
y=p_{\beta}(w; q)\ (\alpha \in \mathbb{N}^M,
\beta \in \mathbb{N}^N, |\alpha|=m, |\beta|=n)$,
we obtain
\begin{align}
&\sum_{0 \le \mu \le  \alpha}
\Biggl( \prod_{1 \le i \le M}
\frac{\langle z_i/a, z_i/b \rangle_{\mu_i}}
{\langle az_i, bz_i \rangle_{\mu_i}}
\prod_{1\le i<j \le M}
\frac{\langle q^{\mu_i- \mu_j}z_i/z_j\rangle}{\langle z_i/z_j \rangle}
\prod_{1 \le i \le j \le M} 
\frac{\langle q^{\mu_i+\mu_j}z_iz_j/q \rangle}
{\langle z_i z_j/q \rangle} \nonumber \\
& \cdot \prod_{1\le i, j \le M}
\frac{\langle  z_iz_j/q, z_i/q^{\alpha_j} z_j\rangle_{\mu_i}}
{\langle q^{\alpha_j} z_i z_j, qz_i/z_j \rangle_{\mu_i}}
\prod_{\substack{1\le i \le M \\ 1 \le k \le N}}
\frac{\langle q^{\beta_k} z_i w_k/\sqrt{q}, \sqrt{q}z_i/w_k\rangle_{\mu_i}}
{\langle z_i w_k/\sqrt{q}, \sqrt{q}z_i/q^{\beta_k} w_k \rangle_{\mu_i}}
\Biggr) \nonumber \\
&= \frac{\langle ab \rangle_{|\alpha|}}
{\langle q^{1- |\alpha|}/ab \rangle_{|\beta|}}
\frac{\prod_{1 \le k \le N}\langle \sqrt{q}w_k/a, \sqrt{q}w_k/b \rangle_{\beta_k}}
{\prod_{1 \le i \le M} \langle az_i, bz_i \rangle_{\alpha_i}}
\frac{\prod_{1 \le i, j \le M} \langle z_i z_j\rangle_{\alpha_i}}
{\prod_{1 \le i < j \le M} \langle z_i z_j\rangle_{\alpha_i+\alpha_j}}
\frac{\prod_{1 \le k < l \le N} \langle w_k w_l \rangle_{\beta_k+\beta_l}}
{\prod_{1 \le k, l \le N} \langle w_k w_l \rangle_{\beta_k}} \nonumber \\
&\cdot \prod_{\substack{1\le k \le N \\ 1 \le i \le M}}
\frac{\langle \sqrt{q}w_k/q^{\alpha_i} z_i\rangle_{\beta_k}}
{\langle \sqrt{q}w_k/z_i \rangle_{\beta_k}}
\sum_{0 \le \nu \le \beta}
\Biggl( \prod_{1 \le k \le N}
\frac{\langle aw_k/\sqrt{q}, bw_k/\sqrt{q} \rangle_{\nu_k}}
{\langle \sqrt{q}w_k/a, \sqrt{q}w_k/b \rangle_{\nu_k}}
\prod_{1\le k<l \le N}
\frac{\langle q^{\nu_k- \nu_l}w_k/w_l\rangle}{\langle w_k/w_l \rangle}
 \nonumber \\
&\cdot \prod_{1 \le k \le l \le N} 
\frac{\langle q^{\nu_k+\nu_l}w_k w_l/q \rangle}
{\langle w_k w_l/q \rangle}
\prod_{1\le k, l \le N}
\frac{\langle  w_kw_l/q, w_k/q^{\beta_l} w_l\rangle_{\nu_k}}
{\langle q^{\beta_l} w_k w_l, qw_k/w_l \rangle_{\nu_k}}
\prod_{\substack{1\le k \le N \\ 1 \le i \le M}}
\frac{\langle q^{\alpha_i} w_k z_i/\sqrt{q}, \sqrt{q}w_k/z_i\rangle_{\nu_k}}
{\langle w_kz_i/\sqrt{q}, \sqrt{q}w_k/q^{\alpha_i} z_i \rangle_{\nu_k}}
\Biggr).
\end{align}
Relabeling $M$ and $N$ by $m$ and $n$, and 
replacing the variables and parameters by 
\begin{align}
z_i \rightarrow \sqrt{q}x_i\ (i=1, \ldots, m), \quad w_k \rightarrow y_k\ (k=1, \ldots, n), \quad 
(a, b)\rightarrow (\sqrt{q}/a_1,\sqrt{q}/a_2),
\end{align}
we obtain a transformation formula of type $C$, due to Rosengren \cite[Corollary 4.4]{R1}
.
\begin{theo}\label{Theo:trans form C}
For $\alpha \in \mathbb{N}^m$ and $\beta \in \mathbb{N}^n$,
the following identity holds:
\begin{align}
&\sum_{0 \le \mu \le  \alpha}
\Biggl( \prod_{1 \le i \le m}
\frac{\langle a_1x_i, a_2x_i \rangle_{\mu_i}}
{\langle qx_i/a_1, qx_i/a_2 \rangle_{\mu_i}}
\prod_{1\le i<j \le m}
\frac{\langle q^{\mu_i- \mu_j}x_i/x_j\rangle}{\langle x_i/x_j \rangle}
\prod_{1 \le i \le j \le m} 
\frac{\langle q^{\mu_i+\mu_j}x_ix_j \rangle}
{\langle x_i x_j \rangle} \nonumber \\
& \cdot \prod_{1\le i, j \le m}
\frac{\langle  x_ix_j, x_i/q^{\alpha_j} x_j\rangle_{\mu_i}}
{\langle q^{\alpha_j+1} x_i x_j, qx_i/x_j \rangle_{\mu_i}}
\prod_{\substack{1\le i \le m \\ 1 \le k \le n}}
\frac{\langle q^{\beta_k} x_i y_k, q x_i/y_k\rangle_{\mu_i}}
{\langle x_i y_k, q^{1-\beta_k}x_i/y_k \rangle_{\mu_i}}
\Biggr) \nonumber \\
&= \frac{\langle q/a_1a_2 \rangle_{|\alpha|}}
{\langle a_1 a_2/q^{|\alpha|} \rangle_{|\beta|}}
\frac{\prod_{1 \le k \le n}\langle a_1 y_k, a_2y_k \rangle_{\beta_k}}
{\prod_{1 \le i \le m} \langle qx_i/a_1, qx_i/a_2 \rangle_{\alpha_i}}
\frac{\prod_{1 \le i, j \le m} \langle qx_i x_j\rangle_{\alpha_i}}
{\prod_{1 \le i < j \le m} \langle qx_i x_j\rangle_{\alpha_i+\alpha_j}}
\frac{\prod_{1 \le k < l \le n} \langle y_k y_l \rangle_{\beta_k+\beta_l}}
{\prod_{1 \le k, l \le n} \langle y_k y_l \rangle_{\beta_k}} \nonumber \\
&\cdot \prod_{\substack{1\le k \le n \\ 1 \le i \le m}}
\frac{\langle y_k/q^{\alpha_i} x_i\rangle_{\beta_k}}
{\langle y_k/x_i \rangle_{\beta_k}}
\sum_{0 \le \nu \le \beta}
\Biggl( \prod_{1 \le k \le n}
\frac{\langle y_k/a_1, y_k/a_2 \rangle_{\nu_k}}
{\langle a_1y_k, a_2 y_k\rangle_{\nu_k}}
\prod_{1\le k<l \le n}
\frac{\langle q^{\nu_k- \nu_l}y_k/y_l\rangle}{\langle y_k/y_l \rangle}
 \nonumber \\
&\cdot \prod_{1 \le k \le l \le n} 
\frac{\langle q^{\nu_k+\nu_l}y_k y_l/q \rangle}
{\langle y_k y_l/q \rangle}
\prod_{1\le k, l \le n}
\frac{\langle  y_k y_l/q, y_k/q^{\beta_l} y_l\rangle_{\nu_k}}
{\langle q^{\beta_l} y_k y_l, qy_k/y_l \rangle_{\nu_k}}
\prod_{\substack{1\le k \le n \\ 1 \le i \le m}}
\frac{\langle q^{\alpha_i} y_k x_i, y_k/x_i\rangle_{\nu_k}}
{\langle y_k x_i, y_k/q^{\alpha_i} x_i \rangle_{\nu_k}}
\Biggr). 
\end{align}
\end{theo}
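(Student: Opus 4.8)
The plan is to derive Theorem~\ref{Theo:trans form C} as a limiting case of the master kernel identity~(\ref{eq:main theo q}) of Theorem~\ref{Theo:set ver}, followed by a multiple principal specialization, in close analogy with the derivation of the type $BC$ formula but in a situation where the inner sums collapse entirely. The whole argument is a pipeline of specializations, so no new structural input beyond the earlier kernel identity is required.

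First I would specialize the parameters in~(\ref{eq:main theo q}) to $(c,d,t)=(q^{1/2},-q^{1/2},q)$. With these values the pair of factors $\langle c x_i^{\epsilon_i}\rangle\langle d x_i^{\epsilon_i}\rangle$ amalgamates and the choice $t=q$ degenerates many of the ratios built from $v(z)=\langle tz\rangle/\langle z\rangle$, reducing~(\ref{eq:main theo q}) to the simpler identity~(\ref{eq:t=q on main theo}) with $\alpha=\sqrt{-ab}$. The key input at this stage is the preceding Lemma, which evaluates the inner sums over $(J,\delta)$ and $(L,\delta)$ as $(u+u^{-1})^{|I^c|}$ and $(u+u^{-1})^{|K^c|}$, respectively; I would establish this Lemma by the same residue argument used for Theorem~\ref{Theo:Main}, regarding each side as a rational function in a single variable and matching the poles at $y_n=\sqrt{tq}x_m$ together with the behaviour as $y_n\to\infty$.

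Next I would put $u=\sqrt{-1}$, so that $u+u^{-1}=0$ and every term with $|I^c|>0$ or $|K^c|>0$ is annihilated; only the contributions with $I=\{1,\dots,m\}$ and $K=\{1,\dots,n\}$ survive, yielding~(\ref{eq:q^2 ver}). Evaluating the scalar $e(\sqrt{-1};\alpha)_{m-n}$ in closed form and tidying up the signs then produces the signed-sum identity~(\ref{eq:set ver C}). Organizing the remaining sign choices into the blocks $I^\pm,K^\pm$ and rescaling $q\mapsto q^{1/2}$ leaves the identity in a shape suitable for specialization.

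Finally I would apply the multiple principal specialization $x=p_\alpha(z;q)$, $y=p_\beta(w;q)$. The decisive observation is that within each principal block the vanishing $\langle qx_i/x_{i+1}\rangle=0$ forbids an $I^+$-index from sitting immediately below an $I^-$-index, so that in every block $I^-$ must be a bottom segment; consequently $I^-$ and $K^-$ are recorded by single multi-indices $0\le\mu\le\alpha$ and $0\le\nu\le\beta$. This is precisely why the $C$-type formula carries just one summation index on each side, in contrast to the four nested indices of the $BC$ case. I expect the main obstacle to be the explicit evaluation of the specialized products: the numerous $q$-shifted factorials arising from the $I^+/I^-$ cross terms must be telescoped using the elementary identities of Appendix~I of~\cite{GR} together with Milne's lemma~\cite[Lemma~6.11]{Mil}. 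After relabeling $M,N$ as $m,n$ and substituting $z_i\to\sqrt{q}x_i$, $w_k\to y_k$, $(a,b)\to(\sqrt{q}/a_1,\sqrt{q}/a_2)$, the resulting identity is exactly the assertion, recovering Rosengren's Corollary~4.4 of~\cite{R1}. This bookkeeping, while lengthy, is lighter than in the $BC$ case since only one multi-index survives on each side.
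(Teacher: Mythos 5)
Your proposal matches the paper's own derivation essentially step for step: the specialization $(c,d,t)=(q^{1/2},-q^{1/2},q)$ to get (\ref{eq:t=q on main theo}), the Lemma collapsing the inner sums to $(u+u^{-1})^{|I^c|}$ (proved, as the paper also indicates, by the same residue argument as Theorem \ref{Theo:Main}), the evaluation at $u=\sqrt{-1}$ giving (\ref{eq:q^2 ver}) and then (\ref{eq:set ver C}), the rescaling $q\mapsto q^{1/2}$, and the multiple principal specialization in which the vanishing of $\langle qx_i/x_{i+1}\rangle$ forces $I^-$ to be an initial segment of each block, leaving a single multi-index $\mu$ (resp.\ $\nu$) on each side. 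Your bookkeeping of the final relabeling and substitutions $z_i\to\sqrt{q}x_i$, $(a,b)\to(\sqrt{q}/a_1,\sqrt{q}/a_2)$ also agrees with the paper, so the proposal is correct and takes the same route.
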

Rosengren derived this result from 
Gustafson's summation formula of multilateral basic hypergeometric series for type $C$.
We remark that Lassalle has derived a special case of Theorem \ref{Theo:trans form C} from a rational function identity by the method of principal specialization
\cite[Theorem 11]{L1}.
His rational function identity (Theorem 6) corresponds to (\ref{eq:set ver C})
with $a=q, b=-q$.

\section{New family of $q$-difference operators}
\label{sec:New family}
In the $A$ type case,
it is known that there exists an explicit operator 
$\mathcal{H}_A^{x}(u;q, t)$ satisfying the following equation \cite{N2}:
\begin{equation}
(u;q)_{\infty} \mathcal{H}_A^{x}(u;q, t) \Psi_A(x;y)
=(t^m q^n u;q)_{\infty} \mathcal{D}_A^{y}(u ; t,q) \Psi_A(x; y), 
\label{eq:dual Cauchy type A}
\end{equation}
where $\Psi_A(x; y)=\prod_{1 \le i \le m} \prod_{1 \le k \le n}(x_i -y_k)$
is the kernel function of dual Cauchy type 
and $\mathcal{D}^{y}_A(u ; q, t)$ is 
the Macdonald $q$-difference operator:
\begin{align}
&\mathcal{D}^{y}_A(u ; q, t) = 
\sum_{r=0}^n (-u)^r D_{A, r}^y (q,t),\\
&D_{A, r}^y (q,t)= t^{\binom{r}{2}} 
\sum_{\substack{K \subset \{1, \ldots, n \} \\ |K| = r}}
\prod_{\substack{k \in K \\ l \notin K}}
\frac{ty_k-y_l}{y_k -y_l} \prod_{k \in K}T_{q, y_k}.
\end{align}
The operator $\mathcal{H}_A^{x}(u;q, t)$ is defined by
\begin{align}
&\mathcal{H}_A^{x}(u;q, t) = \sum_{l=0}^\infty u^l H^x_{A,l},\\
&H^x_{A,l} = \sum_{\substack{\mu \in \mathbb{N}^m \\|\mu|=l}}
\prod_{1 \le i < j \le m}
\frac{q^{\mu_i}x_i-q^{\mu_j}x_j}{x_i-x_j}
\prod_{1 \le i ,j \le m}
\frac{(tx_i/x_j;q)_{\mu_i}}{(qx_i/x_j;q)_{\mu_i}} \prod_{1 \le i \le m} T_{q, x_i}^{\mu_i}.
\end{align}
We can obtain this fact as the special case of Kajihara's Euler transformation formula.
It is also known that the Macdonald polynomials $P_{A, \lambda}(x|q,t)$ for type A
are the joint eigenfunctions of $\mathcal{H}_A^{x}(u;q, t)$:
\begin{equation}
\mathcal{H}_A^{x}(u;q, t) P_{A,\lambda}(x|q, t)
=P_{A,\lambda}(x|q, t) \prod_{1 \le i \le m}
\frac{(ut^{m-i+1}q^{\lambda_i};q)_{\infty}}
{(ut^{m-i}q^{\lambda_i};q)_{\infty}} .
\end{equation}
The commutativity of this family $\{H^x_{A,l} \}_{l=0}^{\infty}$
is proved in \cite{Sano} through the Wronski relations in the elliptic setting.

In this section, we give the $BC$ type analogue of (\ref{eq:dual Cauchy type A}).
Namely, we construct an explicit operator $\mathcal{H}^x(u;q, t)$ which satisfies 
\begin{align}
\mathcal{H}^x(u;q, t) \Psi(x;y) = 
const. \cdot \widehat{\mathcal{D}}^y(u) \Psi(x;y), 
\label{eq:pre kernel identity of dual}\\
\widehat{\mathcal{D}}^y(u) = \sum_{r=0}^n (-1)^r e(u; \widehat{\alpha})_{q, n-r}
\widehat{D}_r^y. \label{eq:A^x and D^y}
\end{align}
Here $\Psi(x;y) := \prod_{\substack{1 \le i \le m \\1 \le k \le n}} e(x_i; y_k)$ 
is the kernel function of dual Cauchy type for type $BC$ introduced by Mimachi \cite{Mi2} and 
$\widehat{\ }$ means the operation of
replacing the parameters $(q, t)$ with $(t, q)$.
Therefore, $\widehat{\alpha} = \sqrt{q/t} \alpha$ and
$\widehat{D}_r^y$ are the $t$-difference operators $D_r^y(a, b, c, d|t, q)$.
Note that $\Psi(x; y)$ is expanded by the 
Koornwinder polynomials $P_\lambda(x)$ as follows \cite{Mi2}:
\begin{align}
\Psi(x; y) = \sum_{\lambda \subset (n^m)} (-1)^{\lambda^*}
P_\lambda(x) \widehat{P}_{\lambda^*} (y), \label{eq:expand by Koornwinder}
\\
\lambda^*= ( m - \lambda_n', m - \lambda_{n-1}', \ldots, m - \lambda_1'). 
\end{align}

\subsection{Affine Hecke algebras}
\label{subsec:Affine Hecke}
In order to guarantee the existence of $\mathcal{H}^x(u;q, t)$
satisfying (\ref{eq:pre kernel identity of dual}), 
we use the framework of affine Hecke algebras, due to
Cherednik \cite{Chere} and Macdonald \cite{Ma2}.
In this subsection, we recall Noumi's representations 
of affine Hecke algebras of type $C$ and the fundamental facts of
$q$-Dunkl operators.
Our notation is due to \cite{N1}.

We use the parameters $t_0, t_m, u_0 , u_m$ which are defined such that
\begin{equation}
(a, b, c, d) = (t_m^{\frac{1}{2}} u_m^{\frac{1}{2}}, -t_m^{\frac{1}{2}} u_m^{-\frac{1}{2}},
q^{\frac{1}{2}} t_0^{\frac{1}{2}} u_0^{\frac{1}{2}}, -q^{\frac{1}{2}} t_0^{\frac{1}{2}} u_0^{-\frac{1}{2}}) .
\end{equation}
We define the Lustig operators $T^x_0, T^x_1, \ldots, T^x_m$ as
\begin{align}
&T^x_0 = t_0^{\frac{1}{2}} +t_0^{-\frac{1}{2}}
\frac{(1-t_0^{\frac{1}{2}} u_0^{\frac{1}{2}} q^{\frac{1}{2}} x_1^{-1})
(1+t_0^{\frac{1}{2}} u_0^{-\frac{1}{2}} q^{\frac{1}{2}} x_1^{-1})}{1-q x_1^{-2}}(s^x_0 - 1) ,\\
&T^x_i = t^{\frac{1}{2}} +
t^{-\frac{1}{2}}\frac{1-t x_i/ x_{i+1}}{1-x_i/x_{i+1}}(s^x_i-1) \quad (i =1, \ldots, m-1) ,\\
&T^x_m = t_m^{\frac{1}{2}} +t_m^{-\frac{1}{2}}
\frac{(1-t_m^{\frac{1}{2}} u_m^{\frac{1}{2}} x_m)
(1+t_m^{\frac{1}{2}} u_m^{-\frac{1}{2}} x_m)}{1- x_m^2}(s^x_m - 1).
\end{align}
Here $s^x_0, s^x_1, \ldots, s^x_m$ are the simple reflections
\begin{align}
&(s^x_0 f) (x) = f(qx_1^{-1} , x_2, \ldots, x_m) ,\\
&(s^x_i f) (x) = f(x_1 , \ldots, x_{i+1}, x_i, \ldots, x_m)  \quad (i =1 ,\ldots, m-1) ,\\
&(s^x_m f) (x) = f(x_1 , x_2, \ldots, x_m^{-1}). 
\end{align}
Note that these operators $s^x_1, \ldots, s^x_m$ generate the Weyl group $W_m$ of type $BC_m$.
The algebra $\mathcal{H}(W_m^{\rm{aff}})$ generated by 
$T^x_0, T^x_1 ,\ldots , T^x_m$ is isomorphic to the affine Hecke algebra
of type $C_m$:
\begin{align}
&(T^x_{i}-t_i^{\frac{1}{2}})(T^x_{i}+t_i^{-\frac{1}{2}}) =0 \quad
(i=0, 1, \ldots, m), \\
&T^x_i T^x_{i+1} T^x_i = T^x_{i+1} T^x_i T^x_{i+1} \quad (i=1, \ldots, m-2),\\
&T^x_{i} T^x_{i+1} T^x_{i} T^x_{i+1} = T^x_{i+1} T^x_{i} T^x_{i+1} T^x_i
\quad (i=0, m-1),\\
&T_i^x T_j^x = T_j^x T_i^x \quad ( |i- j| \ge 2).
\end{align}
Here we wrote $t_1 = \cdots = t_{m-1}=t$.
The $q$-Dunkl operators $Y^x_1, \ldots, Y^x_m$ are defined by
\begin{equation}
Y^x_i = (T^x_i \cdots T^x_{m-1}) (T^x_m \cdots T^x_0) ((T^x_1)^{-1} \cdots (T^x_{i-1})^{-1}) \ (i=1, \ldots, m).
\end{equation}
We denote by $\mathbb{C}(x)[T_{q, x}^{\pm1}]$
the ring of $q$-difference operators with rational coefficients. 
For any $A^x \in \mathcal{H}(W_m^{\rm{aff}})$,
$A^x$ is expressed as $\sum_{w \in W_m} A^x_w w\ (A^x_w \in \mathbb{C}(x)[T_{q, x}^{\pm1}])$.
Then we define the $q$-difference operator $L^x_A$ by
$L^x_A =\sum_{w \in W_m} A^x_w$.
It is known that the following fact holds.

For any $W_m$-invariant Laurent polynomial $f(\xi)$
in the variables $\xi=(\xi_1, \ldots, \xi_m)$, and 
for any $W_m$-invariant Laurent
polynomial $\varphi(x) \in \mathbb{C}[x^{\pm1}]^{W_m}$,
one has 
\begin{equation}
f(Y^x) \varphi(x) = L^x_{f(Y^x)} \varphi(x).
\end{equation}
Furthermore, the $q$-difference operator $L^x_f:=L^x_{f(Y^x)}$ satisfies 
for any partition $\lambda$
\begin{equation}
f(Y^x)P_\lambda(x) = L^x_f P_\lambda(x) = P_\lambda(x) f(\alpha  t^{\rho_m}
q^\lambda) . \label{eq:L diff eqs}
\end{equation}
In particular, the $q$-difference operators $L^x_{f}$ for
the interpolation polynomials $f=e_r(\xi;\alpha|t)$ 
of column type give rise to van Diejen's operators $D^x_r$.
From this view point, we call $D^x_r$ ``column type'' $q$-difference operators.
Since $\{e_r(\xi;\alpha|t) \}_{r=1}^{m}$ is the generator system of the ring 
$\mathbb{C}[\xi^{\pm1}]^{W_m}$ 
of $W_m$-invariant Laurent polynomials in $m$ variables,
$L^x_f$ is an element of $\mathbb{C}[D^x_1, \ldots, D^x_m]$
for any $f(\xi) \in \mathbb{C}[\xi^{\pm1}]^{W_m}$.
The operator $\mathcal{H}^x(u; q,t)$ to be constructed in the next subsection
is a generating function of ``row type'' $q$-difference operators.

\subsection{Construction of row type $q$-difference operators}
\label{subsec:Construction}
The results of this subsection are based on a discussion with M. Noumi.

Let $\xi=(\xi_1, \ldots, \xi_m)$ and $\eta =(\eta_1, \ldots, \eta_n)$.
We define
\begin{align}
H(u; \xi) =\Phi_0(u; \xi|q, t) =u^{m \gamma}\prod_{i=1}^m
\frac{(\sqrt{tq} u \xi_i^{\pm1}; q)_{\infty}}
{(\sqrt{q/t} u\xi_i^{\pm1}; q)_{\infty}},\quad
E(u; \eta) = \prod_{k=1}^n e(u; \eta_k),
\end{align}
where $\gamma$ is a complex number such that $q^{\gamma}=t$.
Note that $E(u;\eta)$ is a generating function of $e_r(\eta;\alpha|t)$:
\begin{equation}
E(u; \eta) = \sum_{r=0}^n (-1)^r e_r(\eta; \alpha|t) e(u;\alpha)_{t, n-r}.
\end{equation}
This implies that 
\begin{align}
E(u; Y^y) P_\lambda(y) = P_\lambda(y) \prod_{i=1}^n
e(u; \alpha t^{n-i}q^{\lambda_i}).
\end{align}
Namely, the operator $L^y_f$ for $f=E(u;\eta)$ is 
the generating function $\mathcal{D}^y(u)$ of column type operators $D_r^y$.
In the following, we regard $H(u; \xi)$ as  an element of
$u^{-m\gamma}\mathbb{C}[\xi^{\pm1}]^{W_m} [[u]]$.
Namely, $u^{m\gamma} H(u; \xi)$ is a formal power series in $u$
with coefficients in the ring of $W_m$-invariant Laurent polynomials in $\xi$.
\begin{lemm}\label{Lemm:dual Cauchy}
The operators $H(u;Y^x)$ and $E(u; Y^y)$ satisfy the following identity
as formal power series in $u$:
\begin{equation}
\frac{H(u; Y^x)}{H(u; \alpha t^{\rho_m})} \Psi(x;y)
= \frac{e(u; \widehat{\alpha})_{n}}
{e(u; \widehat{\alpha}t^m )_{n}}
\frac{E(u; \widehat{Y}^y)}
{ E(u; \widehat{\alpha}q^{\rho_n})} \Psi(x;y) \\
= \frac{E(u; \widehat{Y}^y)}
{ e(u; \widehat{\alpha}t^m)_{n}} \Psi(x;y). \label{eq:Lemm dual Cauchy}
\end{equation}
\end{lemm}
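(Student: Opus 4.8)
The plan is to expand $\Psi(x;y)$ in the Koornwinder basis via (\ref{eq:expand by Koornwinder}) and to exploit the fact that both sides of (\ref{eq:Lemm dual Cauchy}) act diagonally on this expansion. Regarded coefficientwise in $u$, the series $H(u;\xi)$ is a $W_m$-invariant Laurent polynomial in $\xi$, so (\ref{eq:L diff eqs}) gives $H(u;Y^x)P_\lambda(x)=P_\lambda(x)\,H(u;\alpha t^{\rho_m}q^\lambda)$; likewise the eigenvalue formula for $E$ in its $\,\widehat{\ }\,$ form yields $E(u;\widehat Y^y)\widehat P_{\lambda^*}(y)=\widehat P_{\lambda^*}(y)\prod_{k=1}^n e(u;\widehat\alpha q^{n-k}t^{\lambda^*_k})$. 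Since the products $P_\lambda(x)\widehat P_{\lambda^*}(y)$ for distinct $\lambda\subset(n^m)$ are linearly independent, the operator identity (\ref{eq:Lemm dual Cauchy}) becomes equivalent, order by order in $u$, to the scalar identity
\[
\frac{H(u;\alpha t^{\rho_m}q^\lambda)}{H(u;\alpha t^{\rho_m})}
=\frac{1}{e(u;\widehat\alpha t^m)_n}\prod_{k=1}^n e(u;\widehat\alpha q^{n-k}t^{\lambda^*_k})
\]
for every $\lambda\subset(n^m)$. The second equality of the lemma is then immediate, since $E(u;\widehat\alpha q^{\rho_n})=\prod_{k=1}^n e(u;\widehat\alpha q^{n-k})=e(u;\widehat\alpha)_n$ cancels the prefactor $e(u;\widehat\alpha)_n/e(u;\widehat\alpha t^m)_n$ against $1/E(u;\widehat\alpha q^{\rho_n})$.

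To evaluate the left-hand side I would substitute $\xi_i=\alpha t^{m-i}q^{\lambda_i}$ into $H(u;\xi)$ and divide by the value at $\xi_i=\alpha t^{m-i}$. The prefactor $u^{m\gamma}$ cancels, and each infinite product collapses by $(z;q)_\infty/(q^\ell z;q)_\infty=(z;q)_\ell$ into a finite product of length $\lambda_i$. After pairing the $\sqrt{tq}$- and $\sqrt{q/t}$-factors, using $1/\sqrt{tq}=\sqrt{q/t}\,q^{-1}$ together with $e(u;w)=u^{-1}(1-uw)(1-uw^{-1})$, the contribution of index $i$ telescopes to $e(u;\widehat\alpha t^{m-i})_{q,\lambda_i}/e(u;\widehat\alpha t^{m-i+1})_{q,\lambda_i}$, so that
\[
\frac{H(u;\alpha t^{\rho_m}q^\lambda)}{H(u;\alpha t^{\rho_m})}
=\prod_{i=1}^m\prod_{c=1}^{\lambda_i}\frac{e(u;\widehat\alpha q^{c-1}t^{m-i})}{e(u;\widehat\alpha q^{c-1}t^{m-i+1})},
\]
a product over the cells $(i,c)$ of the diagram of $\lambda$.

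Finally I would bring the right-hand side to the same cellwise form. Telescoping $e(u;c\,t^{\lambda^*_k})/e(u;c\,t^m)$ over the $t$-shifts and using $\lambda^*_k=m-\lambda'_{n+1-k}$ rewrites $\frac{1}{e(u;\widehat\alpha t^m)_n}\prod_{k} e(u;\widehat\alpha q^{n-k}t^{\lambda^*_k})$ as $\prod_{k=1}^n\prod_{s=\lambda^*_k}^{m-1}\frac{e(u;\widehat\alpha q^{n-k}t^{s})}{e(u;\widehat\alpha q^{n-k}t^{s+1})}$. The substitution $k=n-c+1,\ s=m-i$ identifies the $q$- and $t$-data of the two products, while the ranges correspond under the conjugacy $c\le\lambda_i\iff i\le\lambda'_c$, since the condition $s\ge\lambda^*_k$ is precisely $i\le\lambda'_c$. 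Both products therefore run over the same $|\lambda|$ cells and coincide term by term, which establishes the scalar identity and hence the lemma.

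I expect the main obstacle to be the bookkeeping in these two telescopings: tracking the half-integer powers $\sqrt{tq}$ and $\sqrt{q/t}$ when collapsing the infinite $q$-shifted factorials into $e$-factorials, and then verifying that the cell correspondence $(i,c)\leftrightarrow(n-c+1,\,m-i)$ matches the diagram of $\lambda$ with the index set $\{(k,s):\lambda^*_k\le s\le m-1\}$ via conjugation. Everything else is diagonalization and linear independence; once the combinatorial ranges are aligned the two sides agree identically.
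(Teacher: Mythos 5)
Your proposal is correct and follows essentially the same route as the paper's proof: reduce the operator identity, via the dual Cauchy expansion (\ref{eq:expand by Koornwinder}) and the eigenvalue property (\ref{eq:L diff eqs}), to the scalar identity for each $\lambda\subset(n^m)$, then evaluate both sides as the same product of ratios $e(u;\widehat{\alpha}q^{c-1}t^{m-i})/e(u;\widehat{\alpha}q^{c-1}t^{m-i+1})$ over the cells of $\lambda$. The only (cosmetic) difference is on the right-hand side, where the paper rewrites $E(u;\widehat{\alpha}q^{\rho_n}t^{\lambda^*})/E(u;\widehat{\alpha}q^{\rho_n})$ as a product over the complementary cells $(n^m)\setminus\lambda$ and cancels it against the full-rectangle product coming from $e(u;\widehat{\alpha})_n/e(u;\widehat{\alpha}t^m)_n$, whereas you telescope each column directly and re-index via conjugation $c\le\lambda_i\iff i\le\lambda_c'$.
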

\begin{proof}
Note first that $E(u;\widehat{\alpha}q^{\rho_n})
= e(u;\widehat{\alpha})_n$.
From (\ref{eq:expand by Koornwinder}) and
(\ref{eq:L diff eqs}),
the formula (\ref{eq:Lemm dual Cauchy}) is equivalent to the identity
on the eigenvalue:
\begin{align}
\frac{H(u; \alpha t^{\rho_m}q^{\lambda})}{H(u; \alpha t^{\rho_m})} =
\frac{e(u; \widehat{\alpha})_{n}}
{e(u;\widehat{\alpha} t^m )_{n}}
\frac{E(u; \widehat{\alpha} q^{\rho_n}t^{\lambda^*})}{E(u; \widehat{\alpha}q^{\rho_n})} \quad (\lambda \subset (n^m)). \label{eq:eigenvalue of dual cauchy}
\end{align}
The left-hand side is equal to
\begin{equation}
\prod_{1 \le i \le m} \frac{(\sqrt{q/t} u \alpha t^{m-i}; q)_{\lambda_i}
(\sqrt{tq} u \alpha^{-1} t^{-m+i} q^{-\lambda_i}; q)_{\lambda_i}}
{(\sqrt{tq} u \alpha t^{m-i}; q)_{\lambda_i}
(\sqrt{q/t} u \alpha^{-1} t^{-m+i} q^{-\lambda_i}; q)_{\lambda_i}}
\label{eq:intertwine left}.
\end{equation}
Using the notation $c_{ij} = \alpha t^{m-i} q^{j-1}\ (1\le i \le m, 1 \le j\le n)$,
we obtain
\begin{align}
(\ref{eq:intertwine left})=\prod_{(i, j) \in \lambda} \frac{t(1-\sqrt{q/t} c_{ij} u)(1-\sqrt{q/t} c_{ij} u^{-1})}
{(1-\sqrt{tq} c_{ij} u)(1-\sqrt{tq} c_{ij} u^{-1})}
= \prod_{(i, j) \in \mu} \frac{\langle u^{\pm1} \sqrt{q/t}c_{ij}\rangle}
{\langle u^{\pm1} \sqrt{tq} c_{ij}\rangle} .
\end{align}
On the other hand, since we compute
\begin{align}
\frac{E(u; \widehat{\alpha} q^{\rho_n}t^{\lambda^*})}{E(u; \widehat{\alpha}q^{\rho_n})}
&= \prod_{k=1}^n \frac{\langle \sqrt{q/t}u \alpha  q^{n-k}t^{\lambda_k^*}\rangle
\langle \sqrt{t/q}u \alpha^{-1}  q^{k-n}t^{-\lambda_k^*}\rangle}
{\langle \sqrt{q/t} u\alpha q^{n-k}\rangle \langle \sqrt{t/q}u\alpha^{-1} q^{k-n}\rangle} \nonumber \\
&= \prod_{k=1}^n \frac{\langle \sqrt{tq}u \alpha  q^{n-k}t^{\lambda_k^*-1}\rangle
\langle \sqrt{tq}u^{-1} \alpha  q^{n-k}t^{\lambda_k^*-1}\rangle}
{\langle \sqrt{q/t}u\alpha  q^{n-k}\rangle \langle \sqrt{q/t}u^{-1} \alpha  q^{n-k}\rangle} \nonumber \\
&= \prod_{(i, j) \in (n^m)\setminus \lambda } \frac{\langle u^{\pm1} \sqrt{tq}c_{ij}\rangle}
{\langle u^{\pm1} \sqrt{q/t} c_{ij}\rangle},
\end{align}
and hence
the right-hand side of (\ref{eq:eigenvalue of dual cauchy}) is equal to
\begin{align}
\frac{\langle \sqrt{q/t} \alpha u^{\pm1} \rangle_{q,n}}{\langle \sqrt{tq}  \alpha t^{m-1} u^{\pm1} \rangle_{q,n}}
\prod_{(i, j) \in (n^m)\setminus \lambda } \frac{\langle u^{\pm1} \sqrt{tq}c_{ij}\rangle}
{\langle u^{\pm1} \sqrt{q/t} c_{ij}\rangle} 
&= \prod_{(i, j) \in (n^m)} \frac{\langle u^{\pm1} \sqrt{q/t}c_{ij}\rangle}
{\langle u^{\pm1} \sqrt{tq} c_{ij}\rangle}
\prod_{(i, j) \in (n^m)\setminus \lambda } \frac{\langle u^{\pm1} \sqrt{tq}c_{ij}\rangle}
{\langle u^{\pm1} \sqrt{q/t} c_{ij}\rangle} \nonumber \\
&= \prod_{(i, j) \in \lambda} \frac{\langle u^{\pm1} \sqrt{q/t}c_{ij}\rangle}
{\langle u^{\pm1} \sqrt{tq} c_{ij}\rangle}.
\end{align} 
\end{proof}
Next we show that $\frac{H(u; \xi)}{H(u; \alpha t^{\rho_m})}$
is a generating function of the row type
interpolation polynomials $h_{l}(\xi; \alpha| q, t)$
introduced by \cite{KNS}:
\begin{equation}
h_{l}(\xi; \alpha| q, t) = \sum_{\substack{\nu \in \mathbb{N}^m \\
|\nu| = l}}
\frac{\langle t \rangle_{\nu_1} \cdots \langle t \rangle_{\nu_m}}
{\langle q \rangle_{\nu_1} \cdots \langle q \rangle_{\nu_m}}
e(\xi_1; \alpha)_{\nu_1} e(\xi_2; tq^{\nu_1} \alpha)_{\nu_2}
\cdots e(\xi_m; t^{m-1} q^{\nu_1 + \cdots \nu_{m-1}} \alpha)_{\nu_m}.
\end{equation}
Note that the Laurent polynomial $h_l (\xi; \alpha|q, t)$
is $W_m$-invariant and satisfies the following interpolation
property:
For any partition $\mu \not\supset (l)$,
\begin{equation}
h_l(\alpha t^{\rho_m} q^{\mu}; \alpha|q, t) =0 .
\end{equation}
\begin{lemm}
The following identity holds as formal power series in $u$:
\begin{equation}
\frac{H(u; \xi)}{H(u; \alpha t^{\rho_m})} =
\sum_{l=0}^\infty \frac{h_l(\xi; \alpha|q, t)}{e(u; t^m\sqrt{q/t} \alpha)_{l}}.
\label{eq:gene func of row type}
\end{equation}
\end{lemm}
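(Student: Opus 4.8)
The plan is to reduce (\ref{eq:gene func of row type}) to its one–variable case and then to reconstruct the $m$–variable identity by an iterated expansion in which the spectral ($u$–dependent) factors telescope. Writing the one–factor kernel as $g(u;\eta)=(\sqrt{tq}\,u\eta^{\pm1};q)_{\infty}/(\sqrt{q/t}\,u\eta^{\pm1};q)_{\infty}$, the defining product gives the factorization
\[
\frac{H(u;\xi)}{H(u;\alpha t^{\rho_m})}=\prod_{i=1}^{m}\frac{g(u;\xi_i)}{g(u;t^{i-1}\alpha)} ,
\]
where I have paired $\xi_i$ with the base point $t^{i-1}\alpha$ (the set $\{t^{i-1}\alpha\}$ equals $\{\alpha t^{m-i}\}$, so the scalar denominator is unchanged). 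Both sides of (\ref{eq:gene func of row type}) are honest elements of $\mathbb{C}[\xi^{\pm1}]^{W_m}[[u]]$: the $u^{m\gamma}$ prefactors cancel, and on the right $1/e(u;w)_{l}=u^{l}/(uw^{\pm1};q)_{l}$ is divisible by $u^{l}$, so only finitely many $l$ contribute to each power of $u$. I regard the identity in this power–series ring.

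The single analytic ingredient is the one–variable summation
\begin{equation}
\frac{g(u;\xi)}{g(u;\beta)}=\sum_{\nu\ge0}\frac{\langle t\rangle_{\nu}}{\langle q\rangle_{\nu}}\,\frac{e(\xi;\beta)_{\nu}}{e(u;\sqrt{tq}\,\beta)_{\nu}} ,
\label{eq:1var}
\end{equation}
the case $m=1$ of the lemma (with base $\beta$ in place of $\alpha$). To prove it I would evaluate both sides at the spectral points $\xi=\beta q^{\lambda}$, $\lambda\in\mathbb{N}$. By the one–variable infinite–product computation underlying Lemma~\ref{Lemm:dual Cauchy}, the left side equals $e(u;\sqrt{q/t}\,\beta)_{\lambda}/e(u;\sqrt{tq}\,\beta)_{\lambda}$, while on the right $e(\beta q^{\lambda};\beta)_{\nu}=0$ for $\nu>\lambda$, so the series terminates; clearing denominators turns the resulting finite identity into a terminating summation equivalent to the $q$–Saalsch\"utz sum (\ref{eq:Saal}). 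Since both sides of (\ref{eq:1var}) are invariant under $\xi\to\xi^{-1}$ and the coefficient of each power of $u$ is a Laurent polynomial of bounded degree, agreement at all points $\beta q^{\lambda}$ forces (\ref{eq:1var}). I will also record the purely spectral shift relation $g(u;q^{s}\eta)/g(u;\eta)=e(u;\sqrt{q/t}\,\eta)_{s}/e(u;\sqrt{tq}\,\eta)_{s}$, immediate from the same computation (or by telescoping $q$–shifted factorials).

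With these in hand the multivariable statement follows by expanding the factorization one variable at a time. Set $s_{i-1}=\nu_1+\cdots+\nu_{i-1}$ and $P_i=t^{\,i-1/2}q^{1/2}$, so that $\sqrt{tq}\,t^{i-1}=P_i$, $\sqrt{q/t}\,t^{i-1}=P_{i-1}$ and $t^{m}\sqrt{q/t}=P_m$. At stage $i$, with $\nu_1,\dots,\nu_{i-1}$ (hence $s_{i-1}$) already fixed, I write $\frac{g(u;\xi_i)}{g(u;t^{i-1}\alpha)}=\frac{g(u;t^{i-1}q^{s_{i-1}}\alpha)}{g(u;t^{i-1}\alpha)}\cdot\frac{g(u;\xi_i)}{g(u;t^{i-1}q^{s_{i-1}}\alpha)}$ and expand the last factor by (\ref{eq:1var}) with base $t^{i-1}q^{s_{i-1}}\alpha$. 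The $\xi$–dependent output is $\prod_{i}\frac{\langle t\rangle_{\nu_i}}{\langle q\rangle_{\nu_i}}e(\xi_i;t^{i-1}q^{s_{i-1}}\alpha)_{\nu_i}$, which is exactly the summand defining $h_{l}(\xi;\alpha)$ with $l=\nu_1+\cdots+\nu_m$. Using the shift relation together with $e(u;q^{s_{i-1}}P_i\alpha)_{\nu_i}=e(u;P_i\alpha)_{s_i}/e(u;P_i\alpha)_{s_{i-1}}$, the accompanying scalar becomes
\[
\prod_{i=1}^{m}\frac{1}{e(u;P_iq^{s_{i-1}}\alpha)_{\nu_i}}\cdot\prod_{i=1}^{m}\frac{e(u;P_{i-1}\alpha)_{s_{i-1}}}{e(u;P_i\alpha)_{s_{i-1}}}=\prod_{i=1}^{m}\frac{e(u;P_{i-1}\alpha)_{s_{i-1}}}{e(u;P_i\alpha)_{s_i}}=\frac{1}{e(u;P_m\alpha)_{l}} ,
\]
where the middle product telescopes (the denominator of the $i$–th factor cancels the numerator of the $(i+1)$–st, and $s_0=0$, $s_m=l$). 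Since $P_m\alpha=t^{m}\sqrt{q/t}\,\alpha$, summing over all compositions $(\nu_1,\dots,\nu_m)$ with fixed $l$ and then over $l$ reassembles the right–hand side of (\ref{eq:gene func of row type}).

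The main obstacle is the one–variable summation (\ref{eq:1var}); it is the only genuinely analytic step, and the care lies in reducing it cleanly to the $q$–Saalsch\"utz sum (\ref{eq:Saal}) rather than to a less tractable nonterminating series. The remaining points are routine: the legitimacy of rearranging the iterated sum, guaranteed by the $u^{l}$–divisibility that makes every coefficient of $u^{k}$ a finite sum, and the fact that the telescoping is uniform in the two $q$–shifted factorials because it is carried out at the level of $e(u;\,\cdot\,)_{s}$ (so the factors involving $\eta^{-1}$ need no separate treatment).
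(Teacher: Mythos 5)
Your proposal is correct, and it takes a genuinely different route from the paper's proof. The paper argues by specialization and continuation: at $t=q^{-k}$ the kernel $H(u;\xi)$ truncates to the finite product $\prod_{i=1}^m e(u;q^{(1-k)/2}\xi_i)_k$, whose expansion in the $h_l(\xi;\alpha|q,t)$ is quoted from Lemma 5.4 of \cite{KNS}; dividing by $H(u;\alpha t^{\rho_m})=e(u;\sqrt{tq}/\alpha)_{km}$ gives (\ref{eq:gene func of row type}) at $t=q^{-k}$, and since every coefficient of a power of $u$ is a rational function of $t^{\frac{1}{2}}$, validity at these infinitely many values of $t$ forces the identity in general. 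You instead work at generic $t$ throughout: you factor the left-hand side into one-variable pieces $g(u;\xi_i)/g(u;t^{i-1}\alpha)$, prove the $m=1$ case of the lemma by interpolation at the spectral points $\xi=\beta q^{\lambda}$, and reassemble the multivariable identity through an iterated expansion whose spectral factors telescope. I checked your key steps: at $\xi=\beta q^{\lambda}$ the left side equals $e(u;\sqrt{q/t}\beta)_{\lambda}/e(u;\sqrt{tq}\beta)_{\lambda}$, the right side terminates at $\nu=\lambda$, and after clearing denominators the sum is the balanced terminating ${}_{3}\phi_{2}$ with upper parameters $t,\,q^{-\lambda},\,q^{\lambda}\beta^{2}$, lower parameters $u\sqrt{tq}\beta,\,u^{-1}\sqrt{tq}\beta$ and argument $q$, i.e.\ precisely the classical $q$-Saalsch\"utz evaluation (your phrase ``equivalent to (\ref{eq:Saal})'' is slightly loose, since (\ref{eq:Saal}) is the base-point transformation derived from that sum rather than the summation itself, but the substance is right); the degree bound $\deg_{\xi}\le k$ on the coefficient of $u^{k}$, together with $W_1$-invariance, makes the interpolation at infinitely many spectral points legitimate; and the telescoping is exact because $e(u;q^{s_{i-1}}P_i\alpha)_{\nu_i}=e(u;P_i\alpha)_{s_i}/e(u;P_i\alpha)_{s_{i-1}}$, so the scalar product collapses to $1/e(u;P_m\alpha)_{l}$ with $P_m\alpha=t^m\sqrt{q/t}\,\alpha$. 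As for what each approach buys: the paper's proof is much shorter but leans on an external identity from \cite{KNS} and an indirect continuation-in-$t$ step; yours is self-contained apart from $q$-Saalsch\"utz (which the paper invokes anyway), never leaves generic parameters, and explains structurally why the base point $t^m\sqrt{q/t}\,\alpha$ appears, namely as the endpoint of the telescoping chain $P_0\alpha,\ldots,P_m\alpha$.
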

\begin{proof}
If $t=q^{-k}\ (k=0, 1, 2, \ldots)$,
from Lemma 5.4 in \cite{KNS} one has
\begin{equation}
H(u; \xi)= \prod_{1 \le i \le m} e(u; q^{\frac{1}{2}(1-k)}\xi_i )_{k}
=\sum_{l=0}^{km} h_l(\xi; \alpha|q, t) e(u; \sqrt{tq}/\alpha)_{km-l}
\label{eq:lemma 5.4}.
\end{equation}
Since $H(u;\alpha t^{\rho_m})$ with $t=q^{-k}$
equals $e(u; \sqrt{tq}/\alpha)_{km}$,
by dividing the both sides of (\ref{eq:lemma 5.4}) by 
$e(u;\sqrt{tq}/\alpha)_{km}$,
we obtain this lemma in the case of $t=q^{-k}$.
In the formal power series of $u$ in each side of 
(\ref{eq:gene func of row type}), all the coefficients are the rational
functions in $t^{\frac{1}{2}}$. Hence the identity 
(\ref{eq:gene func of row type}) follows from its validity at
infinitely many values of $t=q^{-k}\ (k=0, 1, 2, \ldots)$. 
\end{proof}
From this lemma, it follows that
\begin{equation}
\frac{H(u; Y^x)}{H(u; \alpha t^{\rho_m})}
= \sum_{l=0}^{\infty} \frac{h_l(Y^x; \alpha|q,t)}{e(u; t^m\sqrt{q/t} \alpha)_{ l}}.
\end{equation}
We now define the $q$-difference operators 
$H^x_{l}:=H^x_l(a, b ,c,d|q,t)\ (l=0, 1, 2, \ldots)$ 
to be $L^x_{f}$ for $f= h_l(\xi; \alpha|q, t)$, so that
\begin{align}
H_l^x P_\lambda(x) = P_\lambda(x) h_l(\alpha t^{\rho_m}q^\lambda; \alpha|q, t). \label{eq:row type diff eq.}
\end{align}
We call these operators $H^x_l\ (l=0, 1, 2, \ldots)$
``row type'' $q$-difference operators.
We also introduce a generating function $
\mathcal{H}^x(u) := \mathcal{H}^x(u;q, t)$
of $H_l^x$ by
\begin{equation}
\mathcal{H}^x(u;q, t)=\sum_{l=0}^{\infty}
\frac{H_l^x}{e(u;t^m \sqrt{q/t}\alpha)_l}  \in 
\mathbb{C}(x)[T_{q, x}^{\pm1}][[u]].
\end{equation}
From Lemma \ref{Lemm:dual Cauchy} , we obtain a
``kernel identity of dual Cauchy type''.
\begin{theo}\label{Theo:dual Cauchy}
The kernel function of dual Cauchy type intertwines the $q$-difference operator
$\mathcal{H}^x(u)$ with the $t$-difference operator 
$\widehat{\mathcal{D}}^y(u)$:
\begin{equation}
\mathcal{H}^x(u) \Psi(x; y) = 
\frac{\widehat{\mathcal{D}}^y(u) }{e(u; \widehat{\alpha} t^m)_{n}}
\Psi(x;y). \label{eq:Theo dual Cauchy}
\end{equation}
\end{theo}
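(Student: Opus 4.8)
The plan is to deduce the theorem directly from Lemma~\ref{Lemm:dual Cauchy}, by translating each side of (\ref{eq:Lemm dual Cauchy}) into the language of $q$- and $t$-difference operators acting on $\Psi(x;y)$. The whole argument rests on two structural facts already in place: that $\Psi(x;y)=\prod_{i,k}e(x_i;y_k)$ is $W_m$-invariant in $x$ and $W_n$-invariant in $y$ (each factor $e(x_i;y_k)$ being invariant under $x_i\mapsto x_i^{-1}$ and symmetric in the $x$'s and in the $y$'s), and the key identity $f(Y^x)\varphi(x)=L^x_f\varphi(x)$, valid for $W_m$-invariant $f$ and $W_m$-invariant $\varphi$. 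Because the commuting Dunkl operators $Y^x_1,\dots,Y^x_m$ may be substituted into any $W_m$-invariant formal series, these facts let me move freely between the operators $H^x_l=L^x_{h_l}$ and the eigenoperators $h_l(Y^x;\alpha|q,t)$.

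First I would rewrite the left-hand side. Applying the key identity term by term and then the generating-function expansion (\ref{eq:gene func of row type}) with $\xi=Y^x$ gives
\[
\mathcal{H}^x(u)\Psi(x;y)=\sum_{l\ge 0}\frac{H^x_l\,\Psi(x;y)}{e(u;t^m\sqrt{q/t}\alpha)_l}=\sum_{l\ge 0}\frac{h_l(Y^x;\alpha|q,t)}{e(u;t^m\sqrt{q/t}\alpha)_l}\,\Psi(x;y)=\frac{H(u;Y^x)}{H(u;\alpha t^{\rho_m})}\,\Psi(x;y)
\]
as formal power series in $u$. This is precisely the left-hand side of (\ref{eq:Lemm dual Cauchy}) applied to $\Psi$.

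Next I would invoke Lemma~\ref{Lemm:dual Cauchy} to replace this by $\dfrac{E(u;\widehat{Y}^y)}{e(u;\widehat{\alpha} t^m)_n}\,\Psi(x;y)$, so that it remains only to identify $E(u;\widehat{Y}^y)\Psi=\widehat{\mathcal{D}}^y(u)\Psi$. The relation $E(u;Y^y)=\mathcal{D}^y(u)$ on $W_n$-invariant functions is already recorded; applying the substitution $(q,t)\mapsto(t,q)$ to it---under which $\alpha\mapsto\widehat{\alpha}=\sqrt{q/t}\alpha$, $D^y_r\mapsto\widehat{D}^y_r$, and the base-$t$ factorial $e(u;\alpha)_{t,n-r}$ becomes the base-$q$ factorial $e(u;\widehat{\alpha})_{q,n-r}$---turns it into $E(u;\widehat{Y}^y)=\widehat{\mathcal{D}}^y(u)$ on $W_n$-invariants, which is exactly the generating function (\ref{eq:A^x and D^y}). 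Since $\Psi$ is $W_n$-invariant in $y$, the key identity in the $y$-variables gives $E(u;\widehat{Y}^y)\Psi=\widehat{\mathcal{D}}^y(u)\Psi$. Chaining the three displays yields (\ref{eq:Theo dual Cauchy}).

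The computation is short because Lemma~\ref{Lemm:dual Cauchy} and the generating-function identity (\ref{eq:gene func of row type}) carry the analytic content; the step I expect to require the most care is the bookkeeping of the hat operation on the $q$-shifted factorials and parameters, ensuring that the base swap $t\leftrightarrow q$ and the substitution $\alpha\mapsto\sqrt{q/t}\alpha$ are applied consistently so that $E(u;\widehat{Y}^y)$ is genuinely the operator $\widehat{\mathcal{D}}^y(u)$ with the correct normalization, and verifying that the substitution of the commuting families $Y^x$ and $\widehat{Y}^y$ into the $W$-invariant formal series in $u$ is legitimate coefficientwise.
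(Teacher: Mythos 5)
Your proposal is correct and takes essentially the same route as the paper: there, Theorem \ref{Theo:dual Cauchy} is obtained directly from Lemma \ref{Lemm:dual Cauchy} combined with the generating-function expansion (\ref{eq:gene func of row type}) and the identifications $H^x_l = L^x_{h_l}$ and $E(u;\widehat{Y}^y)=\widehat{\mathcal{D}}^y(u)$ on $W$-invariant functions, exactly as you argue. Your write-up simply makes explicit the coefficientwise substitution of the Dunkl operators and the $(q,t)\mapsto(t,q)$ bookkeeping that the paper leaves implicit.
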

Theorem \ref{Theo:dual Cauchy} gives the relationship between
$H_l$ and van Diejen's operators $D_r$. 
\begin{theo}
For any integer $l=0, 1, 2, \ldots, n$,
the following equation holds:
\begin{equation}
(-1)^l H_l^x \Psi(x; y) = \sum_{0 \le s \le l} \frac{\langle q^{n-l+1} \rangle_{s}}{\langle q \rangle_{s}}
\langle q^{1-l} t^{-m}, t^{m-1} q^n \alpha^2 \rangle_{s} \widehat{D}_{l-s}^y \Psi(x;y). \label{eq:rela of H and D}
\end{equation}
\end{theo}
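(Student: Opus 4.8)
The plan is to deduce (\ref{eq:rela of H and D}) directly from the generating-function identity (\ref{eq:Theo dual Cauchy}) of Theorem \ref{Theo:dual Cauchy} by expanding both sides in a single family of rational functions of $u$ and matching coefficients, the required connection coefficients being furnished by the $q$-Saalsch\"utz transformation (\ref{eq:Saal}). Writing out (\ref{eq:Theo dual Cauchy}) with the definitions of $\mathcal{H}^x(u)$ and $\widehat{\mathcal{D}}^y(u)$, and using $t^m\sqrt{q/t}\alpha=\widehat{\alpha}t^m$, the identity reads
\begin{equation}
\sum_{l=0}^\infty \frac{H_l^x\,\Psi(x;y)}{e(u;\widehat{\alpha}t^m)_l}
= \frac{1}{e(u;\widehat{\alpha}t^m)_n}\sum_{r=0}^n (-1)^r e(u;\widehat{\alpha})_{n-r}\,\widehat{D}_r^y\,\Psi(x;y).
\end{equation}
As a formal power series in $u$ around $u=0$ one has $1/e(u;\widehat{\alpha}t^m)_l = u^l+\cdots$, so the functions $\{1/e(u;\widehat{\alpha}t^m)_l\}_{l\ge 0}$ are triangular with respect to the powers of $u$, hence linearly independent, and the coefficient of each of them is uniquely determined; on the left it is exactly $H_l^x\Psi$. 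Thus it suffices to re-expand every term $e(u;\widehat{\alpha})_{n-r}/e(u;\widehat{\alpha}t^m)_n$ on the right in this same family.

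The key preparatory step is a reversal identity for the shifted factorials. Since $e(z;w)$ is invariant under $w\mapsto w^{-1}$, reversing the order of the factors gives
\begin{equation}
e(u;q^l\widehat{\alpha}t^m)_{n-l} = e\bigl(u;\,q^{1-n}/(\widehat{\alpha}t^m)\bigr)_{n-l}
\qquad (0\le l\le n),
\end{equation}
and combined with $e(u;\widehat{\alpha}t^m)_n/e(u;\widehat{\alpha}t^m)_l = e(u;q^l\widehat{\alpha}t^m)_{n-l}$ this rewrites the basis vector $1/e(u;\widehat{\alpha}t^m)_l$ as a factorial with the single fixed base point $a:=q^{1-n}/(\widehat{\alpha}t^m)$ and length $n-l$. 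This is precisely the shape of the factorials $e(u;a)_{n-r-s}$ that appear on the right-hand side of (\ref{eq:Saal}), so the apparent mismatch between the ``top-anchored'' family arising on the left and the ``bottom-anchored'' output of Saalsch\"utz disappears; this is the observation that makes the whole computation go through.

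With this in hand I would apply the base-$q$ form of (\ref{eq:Saal}) with $w=u$, base $b=\widehat{\alpha}$, length $n-r$ and target base $a=q^{1-n}/(\widehat{\alpha}t^m)$, expanding $e(u;\widehat{\alpha})_{n-r}$ into the terms $e(u;a)_{n-r-s}$. By the reversal identity each such term equals $e(u;\widehat{\alpha}t^m)_n/e(u;\widehat{\alpha}t^m)_{r+s}$, hence contributes to the basis vector indexed by $l=r+s$; in particular the sum terminates at $l\le n$, matching the range of the theorem. Collecting the coefficient of $1/e(u;\widehat{\alpha}t^m)_l$ and comparing with the left-hand side yields, after using $(-1)^r(-1)^{l-r}=(-1)^l$,
\begin{equation}
(-1)^l H_l^x\,\Psi(x;y)
= \sum_{r=0}^l \begin{bmatrix} n-r \\ l-r \end{bmatrix}_q
e\bigl(q^{\frac{1}{2}(n-r-1)}\widehat{\alpha};\,q^{\frac{1}{2}(1-n+r)}/a\bigr)_{l-r}\,
\widehat{D}_r^y\,\Psi(x;y).
\end{equation}

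It remains to simplify the connection coefficients and to reindex by $s=l-r$. A short computation with $\langle q^{-L}\rangle_s=(-1)^s\langle q^{L-s+1}\rangle_s$ collapses the $q$-binomial to $\begin{bmatrix} n-l+s \\ s \end{bmatrix}_q = \langle q^{n-l+1}\rangle_s/\langle q\rangle_s$, while evaluating the two arguments $z=q^{\frac{1}{2}(n-l+s-1)}\widehat{\alpha}$ and $w=q^{\frac{1}{2}(n+l-s-1)}\widehat{\alpha}t^m$ of the residual factorial and using $\widehat{\alpha}^2=(q/t)\alpha^2$ gives $zw=t^{m-1}q^n\alpha^2$ and $z/w=q^{s-l}t^{-m}$, whence $e(z;w)_s=\langle q^{1-l}t^{-m},\,t^{m-1}q^n\alpha^2\rangle_s$. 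Substituting these two simplifications reproduces (\ref{eq:rela of H and D}) exactly. The main obstacle here is bookkeeping rather than anything conceptual: one must spot the reversal identity that makes (\ref{eq:Saal}) applicable, and then carry the half-integer powers of $q$ and $t$ through the transformation carefully enough that the two residual factorials collapse precisely to $\langle q^{1-l}t^{-m}\rangle_s$ and $\langle t^{m-1}q^n\alpha^2\rangle_s$.
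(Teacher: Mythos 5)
Your proposal is correct and follows essentially the same route as the paper: both start from the kernel identity of dual Cauchy type (\ref{eq:Theo dual Cauchy}), use the $q$-Saalsch\"utz sum (\ref{eq:Saal}) to re-expand $\widehat{\mathcal{D}}^y(u)\Psi/e(u;\widehat{\alpha}t^m)_n$ in the family $1/e(u;\widehat{\alpha}t^m)_l$, and compare coefficients, arriving at the same connection coefficients $\frac{\langle q^{n-l+1}\rangle_s}{\langle q\rangle_s}\langle q^{1-l}t^{-m},t^{m-1}q^n\alpha^2\rangle_s$. The only difference is one of exposition: you make explicit the reversal identity $e(u;q^l\widehat{\alpha}t^m)_{n-l}=e(u;q^{1-n}/(\widehat{\alpha}t^m))_{n-l}$ and the triangularity argument justifying uniqueness of the expansion, details which the paper leaves implicit (the paper instead invokes the vanishing $H_l^x\Psi=0$ for $l>n$ to truncate the left-hand side).
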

\begin{proof}
Since $H_l^x P_\mu(x) =0$ if $\mu \subset (n^m)$ and $l >n$,
\begin{align}
\mathcal{H}^x(u) \Psi(x; y)
=\left(1 + \frac{1}{e(u; t^m \sqrt{q/t} \alpha)} H_1^x  +\cdots +
\frac{1}{e(u; t^m \sqrt{q/t} \alpha)_{n}} H_n^x \right) \Psi(x;y).
\label{eq:H Phi}
\end{align}
By using the $q$-$\text{Saalsch}\ddot{\text{u}}\text{tz}$ sum (\ref{eq:Saal}),
the right-hand side of (\ref{eq:Theo dual Cauchy}) is expressed by
\begin{align}
\frac{\widehat{D}_y(u)}{e(u; t^m\widehat{\alpha})_{n}} \Psi(x;y)
&= \sum_{l=0}^n \frac{1}{e(u; t^m \sqrt{q/t} \alpha)_{l}} 
\sum_{0 \le r \le l} (-1)^l
\begin{bmatrix} 
n-r  \\ 
l-r  
\end{bmatrix}_{q}
\langle t^{-m}q^{1-l} , t^{m-1}q^n \alpha^2 \rangle_{l-r} \widehat{D}_r^y \Psi(x;y).
\label{eq:D Phi}
\end{align}
Comparing the coefficient of $\frac{1}{e(u; t^m\sqrt{q/t}\alpha)_l}$
in (\ref{eq:H Phi}) with that in (\ref{eq:D Phi}) for each $l$,
we obtain (\ref{eq:rela of H and D}). 
\end{proof}
\subsection{Explicit formulas of $H_l$}
In the previous subsection, we defined the row type $q$-difference operators $H^x_l$ by $q$-Dunkl operators
and showed the relationship between $H^x_l$ and $\widehat{D}_r^y$.
However it is difficult to compute the explicit expressions of operators
$H_l^x$ by means of the $q$-Dunkl operators.
In this subsection, by using the special case of Theorem \ref{Theo:trans form BC}, we give the explicit formulas of $H^x_l$.

For $\nu \in \mathbb{N}^m$ with $0 \le |\nu| \le l$, we define
\begin{align}
&H^{(l)}_{\nu}(x)
=\sum_{\substack{\nu \le \nu^{+} \\ |\nu| \le |\nu^+| \le l}}
\Biggl(
\sum_{\substack{0 \le \nu^- \le (l^m)-\nu^+ \\ |\nu^-|=l-|\nu^+|}} \Biggl( (-1)^l 
\prod_{1 \le i \le m} \frac{\langle ax_i, bx_i, cx_i, dx_i \rangle_{\nu_i^+}
\langle a/x_i, b/x_i, c/x_i, d/x_i \rangle_{\nu_i^-}}
{\langle x_i^2 \rangle_{\nu_i + \nu_i^+} \langle x_i^{-2} \rangle_{\nu_i + \nu_i^-}}
\nonumber \\
&\cdot 
\prod_{1 \le i\le j \le m}
\frac{\langle q^{\nu_i+\nu_j} x_ix_j \rangle}
{\langle x_ix_j  \rangle}
\prod_{1 \le i<j \le m} 
\frac{\langle q^{\nu_i-\nu_j} x_i/x_j, q^{\nu_i^+ - \nu_j^+} x_i/x_j,
q^{\nu_j^- -\nu_i^-} x_i/x_j \rangle
}
{\langle x_i/x_j, x_i/x_j, x_i/x_j \rangle }
\nonumber \\
&\cdot 
\prod_{1 \le i<j \le m} 
\frac{\langle tx_i x_j, qx_ix_j \rangle_{\nu_i^+ + \nu_j^+}
\langle tx_i^{-1} x_j^{-1}, qx_i^{-1} x_j^{-1} \rangle_{\nu_i^- + \nu_j^-}}
{\langle x_i x_j\rangle_{\nu_i + \nu_j^+}
\langle x_i x_j\rangle_{\nu_i^+ + \nu_j}
\langle x_i^{-1} x_j^{-1}\rangle_{\nu_i + \nu_j^-}
\langle x_i^{-1} x_j^{-1}\rangle_{\nu_i^- + \nu_j}}
\prod_{1 \le i, j\le m} 
\frac{\langle q^{\nu_i^+ -\nu_j^-}x_ix_j,  x_i^{-1}x_j^{-1} \rangle
}
{\langle q^{\nu_i+\nu_j^+} x_ix_j, q^{\nu_i+\nu_j^-} x_i^{-1} x_j^{-1} \rangle}
\nonumber \\
&\cdot 
\prod_{1 \le i, j \le m}
\frac{\langle q^{-\nu_j^+} x_i/x_j \rangle_{\nu_i}
\langle tx_i/x_j\rangle_{\nu_i^+}
\langle q^{\nu_j^+ +1}x_j/x_i, tq^{\nu_j^+}x_j/x_i\rangle_{\nu_i^-}
\langle x_i x_j, q^{\nu_j^-+1} x_i^{-1} x_j^{-1}\rangle_{\nu_i}}
{\langle q x_i/x_j \rangle_{\nu_i} \langle qx_i/x_j \rangle_{\nu_i^+}
\langle qx_j/x_i , q^{\nu_j+1}x_j/x_i \rangle_{\nu_i^-}
\langle qx_i x_j\rangle_{\nu^+_i} \langle q^{1-\nu_j} x^{-1}_i x^{-1}_j\rangle_{\nu^-_i}}
\Biggr) \Biggr) 
. \label{eq:explicit of H}
\end{align}
In particular,
for $|\nu|=l, \nu \in \mathbb{N}^m$ we have
\begin{align}
H^{(l)}_{\nu}(x)
&=
\prod_{1 \le i \le m} \frac{\langle ax_i, bx_i, cx_i, dx_i \rangle_{\nu_i}}{\langle x_i^2 \rangle_{2\nu_i}} 
 \prod_{1 \le i<j \le m}
\frac{\langle tx_i x_j \rangle_{\nu_i + \nu_j} \langle q^{\nu_i-\nu_j} x_i/x_j \rangle}
{\langle x_i x_j \rangle_{\nu_i + \nu_j} \langle x_i/x_j \rangle}
\prod_{1 \le i, j \le m}
\frac{\langle tx_i/x_j \rangle_{\nu_i}}
{\langle qx_i/x_j \rangle_{\nu_i}}.
\end{align}
For any $\nu =(\epsilon_1 \nu'_1, \ldots, \epsilon_m \nu'_m)\in \mathbb{Z}^m\ (\epsilon_i=\pm1, \nu_i' \in \mathbb{N})$ such that
$\sum_{i=1}^m \nu_i' \le l$,
we write $|\nu| =\sum_{i=1}^m \nu'_i$ and set
$H^{(l)}_\nu (x;a, b,c, d) =H^{(l)}_{\nu'}(x_1^{\epsilon_1},\ldots,x_m^{\epsilon_m};a,b,c,d)$.
\begin{theo}\label{Theo:explicit of H_l}
The row type $q$-difference operators $H_l^x\ (l=0 ,1, 2, \ldots)$ 
are expressed explicitly as
\begin{align}
&H_l^x = \sum_{\substack{\nu \in \mathbb{Z}^m \\ 0 \le |\nu| \le l}}
H^{(l)}_{\nu}(x; a, b, c, d)
\prod_{1 \le i \le m} T^{\nu_i}_{q, x_i}.
\end{align}
Namely, the Koornwinder polynomials $P_\lambda(x)$ are the joint eigenfunctions of $H^x_l\ (l=0 ,1, 2, \ldots)$:
\begin{equation}
H_l^x P_\lambda(x) = P_\lambda(x) h_l(\alpha t^{\rho_m}q^\lambda; \alpha|q, t).  \label{eq:q-diff eq of H_l}
\end{equation}
\end{theo}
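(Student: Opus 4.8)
The plan is to identify the explicit operator on the right-hand side, which I denote
$\widetilde{H}_l^x := \sum_{\substack{\nu\in\mathbb{Z}^m\\ 0\le|\nu|\le l}} H^{(l)}_{\nu}(x;a,b,c,d)\prod_{1\le i\le m}T^{\nu_i}_{q,x_i}$,
with the operator $H_l^x = L^x_{h_l}$ defined through the $q$-Dunkl operators in Subsection \ref{subsec:Affine Hecke}. Once the operator identity $\widetilde{H}_l^x = H_l^x$ is proved, the eigenvalue equation (\ref{eq:q-diff eq of H_l}) is exactly (\ref{eq:row type diff eq.}), which already holds by construction. So the whole task is the operator identity, and I would establish it by testing both operators against the dual Cauchy kernel $\Psi(x;y) = \prod_{i,k} e(x_i;y_k)$.

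First I would use that the action of $H_l^x$ on $\Psi$ is already known: multiplying (\ref{eq:rela of H and D}) by $(-1)^l$ gives $H_l^x\Psi(x;y) = G_l(x;y)$, where $G_l$ denotes $(-1)^l$ times the right-hand side of (\ref{eq:rela of H and D}). Since each $\widehat{D}_r^y$ is van Diejen's ($t$-difference) operator with explicit coefficients, $G_l$ is a completely explicit function, built from the $t$-shifts $y_k\mapsto t^{\pm1}y_k$ of $\Psi$. On the other hand, because $T_{q,x}^{\nu}\Psi(x;y) = \prod_{i,k} e(q^{\nu_i}x_i;y_k)$, the candidate operator yields an expression built from $q$-shifts in $x$. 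Thus $\widetilde{H}_l^x = H_l^x$ reduces to the single functional identity
\begin{equation*}
\sum_{0 \le |\nu| \le l} H^{(l)}_{\nu}(x)\prod_{\substack{1 \le i \le m\\ 1 \le k \le n}} e(q^{\nu_i}x_i;y_k) = G_l(x;y).
\end{equation*}

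The core step is to recognize this as the advertised special case of the transformation formula (\ref{eq:principal}) of Theorem \ref{Theo:trans form BC}: the outer shift $\nu$ together with the inner summation over $(\nu^-,\nu^+)$ in (\ref{eq:explicit of H}) should match the $x$-side sum over $(\mu^-,\nu^-,\nu^+,\mu^+)$, while the van Diejen expansion of $G_l$ in the $y$-variables should match the $y$-side sum over $(\lambda^-,\kappa^-,\kappa^+,\lambda^+)$; the auxiliary variable $u$ of Theorem \ref{Theo:trans form BC} is removed by the $q$-Saalsch\"utz identity (\ref{eq:Saal}), precisely as in the passage from (\ref{eq:principal}) to (\ref{eq:r.h.s of principal}) and in the derivation of (\ref{eq:rela of H and D}). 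Because $\Phi_0$ degenerates to a finite, dual Cauchy type product at $t = q^{-k}$ (Lemma 5.4 of \cite{KNS}, already exploited in the proof of (\ref{eq:gene func of row type})), I would first prove the displayed identity at $t = q^{-k}$ for $k = 0,1,2,\ldots$, where Theorem \ref{Theo:trans form BC} applies directly, and then extend it to generic $t$ by the fact that all coefficients are rational in $t^{1/2}$.

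Finally, I would pass from this functional identity back to the operator identity by a separation argument: for fixed generic $x$ and for $n$ taken large enough, the functions $\prod_{i,k} e(q^{\nu_i}x_i;y_k)$ attached to distinct shift vectors $\nu$ with $\sum_i|\nu_i| \le l$ are linearly independent in the $y$-variables, so the coefficient of each $\prod_i T^{\nu_i}_{q,x_i}$ in a $q$-difference operator is determined by its action on $\Psi$. Comparing coefficients then forces $H^{(l)}_{\nu}(x)$ to equal the coefficient of $\prod_i T^{\nu_i}_{q,x_i}$ in $H_l^x$ for every $\nu$, i.e. $\widetilde{H}_l^x = H_l^x$. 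The main obstacle I expect is the bookkeeping in the core step: correctly pinning down the specialization of the multi-indices $(\alpha,\beta)$ and of the parameters $(a_1,a_2,a_3,a_4)$ that turns (\ref{eq:principal}) into the displayed identity, and checking that the four-fold index structure of (\ref{eq:principal}) collapses, after the $q$-Saalsch\"utz reduction, to the three-fold structure $(\nu^-,\nu,\nu^+)$ of (\ref{eq:explicit of H}) --- this is lengthy rather than conceptually hard.
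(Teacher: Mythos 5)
Your proposal is correct and takes essentially the same route as the paper: the paper's proof is precisely your ``core step'' carried out in detail --- it specializes Theorem \ref{Theo:trans form BC} at $\alpha=(M^m)$, $\beta=(1^n)$ and $t=q^{-M}$ (after verifying that $t=q^{-M}$ is only an apparent singularity of both sides), recognizes the surviving cross-factors as $T_{q,x}^{\nu}\Psi(x;y)/\Psi(x;y)$, compares the result with the dual Cauchy kernel identity of Theorem \ref{Theo:dual Cauchy}, and extends from $t=q^{-M}$ to generic $t$ by rationality in $t^{1/2}$. The only divergences are minor: the paper works with the generating parameter $u$ rather than the fixed-$l$ relation (\ref{eq:rela of H and D}), it passes from agreement on $\Psi(x;y)$ to operator equality via Mimachi's expansion (\ref{eq:expand by Koornwinder}) (agreement on all $P_\lambda$, $\lambda\subset(n^m)$, with $n$ arbitrary) instead of your linear-independence argument, and the collapse of the four-fold index set to the three-fold structure $(\nu,\nu^-,\nu^+)$ is caused by the vanishing at $t=q^{-M}$ of every term having some $i$ with $\mu_i^->0$ and $\mu_i^+<M$ (so that $(\mu^-,\mu^+)$ is absorbed into the shift $\nu$), not by the $q$-Saalsch\"utz reduction as you suggest.
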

\begin{proof}
We consider Theorem \ref{Theo:trans form BC} 
in the case of $\alpha = (M, \ldots, M) \in \mathbb{N}^m,
\beta=(1, \ldots, 1) \in \mathbb{N}^n$:
\begin{align}
&\prod_{1 \le i \le m} \frac{\langle tq x_i/a_1, tq x_i/a_2, tq x_i/a_3, tq x_i/a_4 \rangle_{M}}
{\langle tq x_i^2, t^2q x_i^2\rangle_{M}}
\prod_{1 \le i < j \le m}
\frac{\langle t q^{M+1}x_i x_j,  t^2 q^{M+1}x_i x_j\rangle_{M}} 
{\langle tqx_i x_j,  t^2qx_i x_j\rangle_{M}} \nonumber \\
&\cdot
\sum_{0 \le \mu^{-} \le \nu^{-} \le \nu^{+} \le \mu^{+} \le (M^m)}
\Biggl( (-1)^{mM+|\nu^+| + |\nu^-|}
e(u; \sqrt{tq}/a_0)_{|\nu^+|-|\nu^-|} \nonumber \\
&\cdot \prod_{1 \le i \le m} 
\frac{\langle a_1 x_i, a_2 x_i, a_3 x_i, a_4 x_i\rangle_{\nu_i^{-}}}
{\langle  tq x_i/a_1, tq x_i/a_2, tq x_i/a_3, tq x_i/a_4 \rangle_{\nu_i^{+}}} 
\prod_{1 \le i \le j \le m}
\frac{\langle q^{\mu_i^- + \mu_j^-} x_i x_j,
t^2 q^{\mu_i^+ + \mu_j^+} x_i x_j\rangle}
{\langle x_i x_j, t^2 x_i x_j \rangle}\nonumber \\
&\cdot
\prod_{1 \le i < j \le m}
\frac{\langle q^{\mu_i^- - \mu_j^-} x_i/x_j,
q^{\mu_i^+ - \mu_j^+} x_i/x_j,
q^{\nu_i^- - \nu_j^-} x_i/x_j,
q^{\nu_i^+ - \nu_j^+} x_i/x_j\rangle
 \langle tx_i x_j, qx_i x_j\rangle_{\nu_i^- + \nu_j^-}}
{\langle x_i/x_j, x_i/x_j, x_i/x_j, x_i/x_j\rangle
\langle  tqx_i x_j, t^2 x_i x_j \rangle_{\nu_i^+ + \nu_j^+}}
\nonumber \\
&\cdot \prod_{1 \le i, j \le m}
\frac{\langle tq^{\mu_i^- + \mu_j^+} x_i x_j,
tq^{\nu_i^- + \nu_j^+} x_i x_j,
q^{\mu_i^- - \mu_j^+} x_i/tx_j\rangle
\langle  tx_i x_j \rangle_{\mu_i^- + \nu_j^+}
\langle  t^2x_i x_j \rangle_{\mu_i^+ + \nu_j^+}}
{\langle tx_i x_j,tx_i x_j, x_i/tq^{\mu_j^+}x_j\rangle
\langle  qx_i x_j \rangle_{\mu_i^-+ \nu_j^-}
\langle  tqx_i x_j \rangle_{\mu_i^+ + \nu_j^-}} \nonumber \\
&\cdot \prod_{1 \le i, j \le m}
\frac{\langle  x_i x_j, x_i/q^{\nu_j^-}x_j , x_i/tq^{M}x_j  \rangle_{\mu_i^-}
\langle  tx_i x_j, x_i/q^{M}x_j \rangle_{\mu_i^+ }
\langle  qx_i/tq^{\nu_j^+}x_j , x_i/q^{\nu_j^+}x_j \rangle_{\nu_i^-}
}
{\langle  tq^{M+1}x_i x_j, qx_i/tq^{\nu_j^+}x_j, qx_i/ x_j \rangle_{\mu_i^-}
\langle  t^2 q^{M+1} x_i x_j,  qx_i/ x_j  \rangle_{\mu_i^+}
\langle qx_i/tq^{\mu_j^+}x_j, qx_i/ x_j \rangle_{\nu_i^-}
} \nonumber \\
&\cdot 
\prod_{1 \le i, j \le m}
\frac{\langle  x_i/q^{\mu_j^+}x_j \rangle_{\nu_i^+}}
{\langle  qx_i/x_j \rangle_{\nu_i^+}}
\prod_{\substack{1\le i \le m\\ 1 \le k \le n}}
\frac{\langle q x_i y_k, q x_i/y_k \rangle_{\mu_i^-}
\langle tq^{\mu_i^+} x_i y_k,  tq^{\mu_i^+} x_i/y_k \rangle_{M-\mu_i^+}}
{\langle x_i y_k,  x_i/y_k \rangle_{\mu_i^-}
\langle tq^{\mu_i^+ +1} x_i y_k,  tq^{\mu_i^+ +1} x_i/y_k \rangle_{M-\mu_i^+}}\Biggr)\nonumber \\
&=e(u; \sqrt{tq}/a_0)_{mM-n} 
\prod_{1 \le k \le n} 
\frac{\langle a_1y_k, a_2y_k, a_3y_k, a_4y_k \rangle}
{\langle y_k^2, ty_k^2\rangle}
\prod_{1 \le k < l \le n}
\frac{\langle q y_k y_l,  tqy_k y_l\rangle}
{\langle y_k y_l,  ty_k y_l\rangle} \nonumber \\
&\cdot
\sum_{0 \le \lambda^{-} \le \kappa^{-} \le \kappa^{+} \le \lambda^{+} \le (1^n)}
(-1)^{n+|\kappa^+| + |\kappa^-|}
\Biggl(e(u; \sqrt{q/t} a_0)_{|\kappa^+|-|\kappa^-|}  \nonumber \\
&\cdot
\prod_{1 \le k \le n} \frac{\langle y_k/a_1, y_k/a_2,
y_k/a_3, y_k/a_4 \rangle_{\kappa_k^{-}}} 
{\langle a_1y_k, a_2y_k, a_3y_k, a_4y_k
\rangle_{\kappa_k^{+}}} 
\prod_{1 \le k \le l \le n}
\frac{\langle q^{\lambda_k^- + \lambda_l^- } y_k y_l/tq,
tq^{\lambda_k^+ + \lambda_l^+ } y_k y_l/q \rangle}
{\langle y_k y_l/tq,  t y_k y_l/q\rangle}\nonumber \\
&\cdot
\prod_{1 \le k < l \le n}
\frac{\langle q^{\lambda_k^- - \lambda_l^-} y_k/y_l,
q^{\lambda_k^+ - \lambda_l^+} y_k/y_l,
q^{\kappa_k^- - \kappa_l^-} y_k/y_l,
q^{\kappa_k^+ - \kappa_l^+} y_k/y_l\rangle
\langle  y_k y_l/q, y_k y_l/t \rangle_{\kappa_k^- + \kappa_l^-}}
{\langle y_k/y_l,  y_k/y_l, y_k/y_l, y_k/y_l\rangle
\langle ty_k y_l/q, y_k y_l \rangle_{\kappa_k^+ + \kappa_l^+}}\nonumber \\
&\cdot \prod_{1 \le k, l \le n}
\frac{\langle q^{\lambda_k^- + \lambda_l^+} y_k y_l/q,
q^{\kappa_k^- + \kappa_l^+} y_k y_l/q,
 q^{\lambda_k^- - \lambda_l^+} y_k/ty_l\rangle
 \langle  y_k y_l/q \rangle_{\lambda_k^- + \kappa_l^+}
 \langle  ty_k y_l/q \rangle_{\lambda_k^+ + \kappa_l^+}}
 {\langle y_k y_l/q, y_k y_l/q, y_k/tq^{\lambda_l^+}y_l \rangle
 \langle  y_k y_l/t \rangle_{\lambda_k^-+ \kappa_l^-}
 \langle  y_k y_l \rangle_{\lambda_k^+ + \kappa_l^-}}\nonumber \\
&\cdot \prod_{1 \le k, l \le n}
\frac{\langle  y_k y_l/tq, y_k/q^{\kappa_l^-}y_l , y_k/tqy_l \rangle_{\lambda_k^-}
\langle  y_k y_l/q , y_k/qy_l \rangle_{\lambda_k^+}
\langle  qy_k/tq^{\kappa_l^+}y_l , y_k/q^{\kappa_l^+}y_l \rangle_{\kappa_k^-}
\langle  y_k/q^{\lambda_l^+}y_l \rangle_{\kappa_k^+}}
{\langle  q y_k y_l, qy_k/tq^{\kappa_l^+}y_l, qy_k/ y_l  \rangle_{\lambda_k^-}
\langle  tq y_k y_l, qy_k/ y_l \rangle_{\lambda_k^+}
\langle qy_k/tq^{\lambda_l^+}y_l, qy_k/ y_l \rangle_{\kappa_k^-}
\langle qy_k/ y_l \rangle_{\kappa_k^+}}\nonumber \\
&\cdot \prod_{\substack{ 1 \le k \le n \\ 1\le i \le m}}
\frac{\langle q^{M} y_k x_i ,  y_k/tx_i \rangle_{\lambda_k^-}
\langle tq^{\lambda_k^+}y_k x_i,  q^{\lambda_k^+-M}y_k/x_i \rangle_{1-\lambda_k^+}}
{\langle y_kx_i , y_k/tq^{M}x_i \rangle_{\lambda_k^-}
\langle tq^{M+\lambda_k^+}  y_kx_i, q^{\lambda_k^+}y_k/x_i \rangle_{1-\lambda_k^+}}\Biggr) .
\label{eq:special of BC duality}
\end{align}
Then the both sides are $W_m$-invariant for the variables $x$ and
$W_n$-invariant for $y$.
Since 
\begin{align}
&\prod_{\substack{1\le i \le m\\ 1 \le k \le n}}
\frac{\langle q x_i y_k, q x_i/y_k \rangle_{\mu_i^-}
\langle tq^{\mu_i^+}x_i y_k,  tq^{\mu_i^+} x_i/y_k \rangle_{M-\mu_i^+}}
{\langle x_i y_k, x_i/y_k \rangle_{\mu_i^-}
\langle t q^{1+\mu_i^+} x_i y_k,  tq^{\mu_i^+ +1} x_i/y_k \rangle_{M-\mu_i^+}} \nonumber \\
&= \prod_{\substack{1\le i \le m\\ 1 \le k \le n}}
\frac{T_{q, x_i}^{\mu_i^-}  \langle  x_i y_k,  x_i/y_k \rangle}
{\langle x_i y_k, x_i/y_k \rangle}
\prod_{\substack{1\le i \le m\\ 1 \le k \le n}}
 \frac{T_{q, x_i}^{-(M-\mu_i^+)} \langle tq^{M}x_i y_k,  tq^{M} x_i/y_k \rangle}
{\langle tq^{M}x_i y_k,  tq^{M} x_i/y_k \rangle}, \label{eq:bilin1}
\end{align}
the left-hand side of (\ref{eq:special of BC duality}) is expressed as 
the bilinear form:
\begin{equation}
\sum_{0 \le \mu^- \le \mu^+ \le M} A_{\mu^-, \mu^+}(u; x)  
\prod_{\substack{1\le i \le m\\ 1 \le k \le n}}
\frac{T_{q, x_i}^{\mu_i^-} \langle  x_i y_k,  x_i/y_k \rangle}
{\langle x_i y_k, x_i/y_k \rangle}
\prod_{\substack{1\le i \le m\\ 1 \le k \le n}}
 \frac{T_{q, x_i}^{-(M-\mu_i^+)} \langle tq^{M}x_i y_k,  tq^{M} x_i/y_k \rangle}
{\langle tq^{M}x_i y_k,  tq^{M} x_i/y_k \rangle}. \label{eq:bilin2}
\end{equation}
In terms of this form, Theorem \ref{Theo:trans form BC} is regarded as 
a kind of bilinear transformation formula.

We consider the special case $t=q^{-M}$ in (\ref{eq:special of BC duality}).
Then the factors involving both variables $x$ and $y$ are expressed by
\begin{align}
&\prod_{\substack{1\le i \le m\\ 1 \le k \le n}}
\frac{\langle q x_i y_k, q x_i/y_k \rangle_{\mu_i^-}
\langle tq^{\mu_i^+}x_i y_k,  tq^{\mu_i^+} x_i/y_k \rangle_{M-\mu_i^+}}
{\langle x_i y_k, x_i/y_k \rangle_{\mu_i^-}
\langle t q^{1+\mu_i^+} x_i y_k,  tq^{\mu_i^+ +1} x_i/y_k \rangle_{M-\mu_i^+}} \nonumber \\ 
&=\prod_{\substack{1\le i \le m}}\left[
\frac{T_{q, x_i}^{\mu_i^-} \Psi(x_i;y)}
{\Psi(x_i;y)}
\frac{T_{q, x_i}^{-(M-\mu_i^+)} \Psi(x_i;y)}
{\Psi(x_i;y)}\right],\\
&\prod_{\substack{ 1 \le k \le n \\ 1\le i \le m}}
\frac{\langle q^{M} y_k x_i ,  y_k/tx_i \rangle_{\lambda_k^-}
\langle tq^{\lambda_k^+}y_k x_i,  q^{\lambda_k^+-M}y_k/x_i \rangle_{1-\lambda_k^+}}
{\langle y_kx_i , y_k/tq^{M}x_i \rangle_{\lambda_k^-}
\langle tq^{M+\lambda_k^+}  y_kx_i, q^{\lambda_k^+}y_k/x_i \rangle_{1-\lambda_k^+}} \nonumber \\
&=\prod_{\substack{1 \le k \le n}}
\left[\frac{T_{t, y_k}^{1- \lambda_k^+} \Psi(x;y_k)}
{\Psi(x;y_k)}
\frac{T_{t, y_k}^{-\lambda_k^-}\Psi(x;y_k) }
{\Psi(x;y_k)}\right].
\end{align}
We check that each side of (\ref{eq:special of BC duality}) 
does not have a pole at the point $t=q^{-M}$.
We have only to examine the following factor in the left-hand side:
\begin{align}
\prod_{1 \le i \le m}
\frac{\langle q^{\mu_i^-  - \mu_i^+}/t \rangle
\langle q^{-M}/t \rangle_{\mu_i^{-}}
\langle q^{1-\nu_i^+}/t \rangle_{\nu_i^{-}}}
{\langle q^{-\mu_i^+}/t \rangle
\langle q^{1-\nu_i^+}/t \rangle_{\mu_i^-}
\langle q^{1-\mu_i^+}/t \rangle_{\nu_i^-}}. 
\label{eq:check l.h.s}
\end{align}
If $\mu_i^+ = M$ for some $i=1, \ldots, m$,
the denominator has a zero at $t=q^{-M}$.
But when $\mu_i^- =0$, since $\langle q^{\mu_i^- - \mu_i^+}/t \rangle =\langle q^{- \mu_i^+}/t \rangle$,
\begin{equation}
(\ref{eq:check l.h.s}) = \prod_{1 \le i \le m}
\frac{\langle q^{1-\nu_i^+}/t \rangle_{\nu_i^{-}}}{\langle q/t \rangle_{\nu_i^-}}.
\end{equation}
Also, when $\mu_i^- > 0$ , since $\langle q^{-\mu_i^+}/t \rangle
=\langle q^{-M}/t \rangle$,
\begin{equation}
(\ref{eq:check l.h.s}) = \prod_{1 \le i \le m}
\frac{\langle q^{\mu_i^-} \rangle
\langle q^{1-M}/t \rangle_{\mu_i^{-}-1}
\langle q^{1-\nu_i^+}/t \rangle_{\nu_i^{-}}}
{\langle q^{1-\nu_i^+}/t \rangle_{\mu_i^-}
\langle q \rangle_{\nu_i^-}}.
\end{equation}
Therefore the point $t=q^{-M}$ in the left-hand side is an apparent singularity.
Note that unless $\mu_i^- =0$ or $\mu_i^+=M$ for each $1 \le i \le m$,
the corresponding term in the left-hand side of (\ref{eq:special of BC duality}) is zero.
We can also check that the point $t=q^{-M}$ in the right-hand side is an apparent singularity.

From the argument above, 
specializing Theorem \ref{Theo:trans form BC} as
$\alpha =(M^m), \beta=(1^n)$ and $t=q^{-M}\ (M=0, 1, 2, \ldots)$,
we find that the factor involving both variables $x$ and $y$ 
in each side of (\ref{eq:special of BC duality}) simplifies
to the form
\begin{align}
\prod_{\substack{i: \mu_i^- >0}}
\frac{T_{q, x_i}^{\mu_i^-} \Psi(x_i;y)}
{\Psi(x_i;y)}
\prod_{\substack{i: \mu_i^+ <M}}
\frac{T_{q, x_i}^{-(M-\mu_i^+)} \Psi(x_i;y)}
{\Psi(x_i;y)}
= \frac{T_{q, x}^{\nu} \Psi(x; y)}{\Psi(x;y)},
\\
\prod_{\substack{k : \lambda_k^+ =0 }}
\frac{T_{t, y_k}^{1- \lambda_k^+} \Psi(x;y_k)}
{\Psi(x;y_k)}
\prod_{\substack{k : \lambda_k^- =1}}
\frac{T_{t, y_k}^{-\lambda_k^-}\Psi(x;y_k)}
{\Psi(x;y_k)}
= \frac{T_{t, y}^{\kappa}\Psi(x;y)}{\Psi(x;y)},
\end{align}
respectively.
Here we set $\nu$ and $\kappa$ as follows:
\begin{align}
\nu_i=
\begin{cases}
\mu_i^- \quad &(\mu_i^- >0, \mu_i^+=M) ,  \\
-(M-\mu_i^+) \quad &(\mu_i^+ < M, \mu_i^-=0) ,\\
0 \quad &(\mbox{otherwise}),
\end{cases} \qquad
\kappa_k=
\begin{cases}
1 \quad &(\lambda_k^+ = 0, \lambda_k^-=0) ,  \\
-1 \quad &(\lambda_k^+ =1, \lambda_k^-=1) ,\\
0 \quad &(\mbox{otherwise}).
\end{cases} 
\end{align}
In this way, 
the left-hand side can be interpreted as the action of a $q$-difference operator 
on the kernel function of dual Cauchy type.
The right-hand side is also expressed by the action of a $t$-difference operator, and
in fact is equal to
\begin{equation}
\Psi(x;y)^{-1}e(u; \sqrt{tq}/a_0)_{mM-n} 
 \mathcal{D}^y(u;a_1, a_2, a_3, a_4|t, q) \Psi(x; y) . 
\end{equation}
We replace the parameters $(a_1, a_2, a_3, a_4)$ with $(a, b, c, d)$.
Then the left-hand side of (\ref{eq:special of BC duality}) can be expressed as
\begin{align}
\Psi(x;y)^{-1} \left( \sum_{l=0}^{mM} e(u;\sqrt{tq}/\alpha)_{mM-l} 
K_l^x \right) \Psi(x;y) 
\end{align}
for some $q$-difference operators $K_l^x$ for which we will determine
the explicit formulas later.
Hence we have 
\begin{align}
\Psi(x;y)^{-1} \left( \sum_{l=0}^{mM} e(u;\sqrt{tq}/\alpha)_{mM-l} 
K_l^x \right) \Psi(x;y)
=\Psi(x;y)^{-1} e(u; \sqrt{tq}/\alpha)_{mM-n}  \widehat{\mathcal{D}}^y(u) \Psi(x;y).
 \label{eq:special case of dual Cauchy}
\end{align}
Comparing (\ref{eq:Theo dual Cauchy}) 
with (\ref{eq:special case of dual Cauchy}),
we obtain that
\begin{align}
&\Psi(x;y)^{-1}
e(u; t^m \widehat{\alpha})_{n} e(u; \sqrt{tq}/\alpha)_{mM-n}
\mathcal{H}^x(u) \Psi(x;y) \nonumber \\
&=
\Psi(x;y)^{-1}\left( \sum_{l=0}^{mM} e(u;\sqrt{tq}/\alpha)_{mM-l} 
K_l^x \right) \Psi(x;y)\nonumber \\
&=\prod_{1 \le i \le m} \frac{\langle tq x_i/a, tq x_i/b, tq x_i/c, tq x_i/d \rangle_{M}}
{\langle tq x_i^2, t^2q x_i^2\rangle_{M}}
\prod_{1 \le i < j \le m}
\frac{\langle t q^{M+1}x_i x_j,  t^2 q^{M+1}x_i x_j\rangle_{M}} 
{\langle tqx_i x_j,  t^2qx_i x_j\rangle_{M}} \nonumber \\
&\cdot
\sum_{0 \le \mu^{-} \le \nu^{-} \le \nu^{+} \le \mu^{+} \le (M^m)}
\Biggl((-1)^{mM+|\nu^+| + |\nu^-|}
e(u; \sqrt{tq}/\alpha)_{|\nu^+|-|\nu^-|} \nonumber \\
&\cdot \prod_{1 \le i \le m} 
\frac{\langle a x_i, b x_i, c x_i, d x_i\rangle_{\nu_i^{-}}}
{\langle  tq x_i/a, tq x_i/b, tq x_i/c, tq x_i/d \rangle_{\nu_i^{+}}} 
\prod_{1 \le i \le j \le m}
\frac{\langle q^{\mu_i^- + \mu_j^-} x_i x_j, t^2 q^{\mu_i^+ + \mu_j^+} x_i x_j\rangle}
{\langle x_i x_j,  t^2 x_i x_j\rangle} \nonumber \\
&\cdot \prod_{1 \le i < j \le m}
\frac{\langle q^{\mu_i^- - \mu_j^-} x_i/x_j,
q^{\mu_i^+ - \mu_j^+} x_i/x_j,
q^{\nu_i^- - \nu_j^-} x_i/x_j,
q^{\nu_i^+ - \nu_j^+} x_i/x_j\rangle
\langle tx_i x_j, qx_i x_j\rangle_{\nu_i^- + \nu_j^-}
}{\langle x_i/x_j, x_i/x_j, x_i/x_j, x_i/x_j\rangle
\langle  tqx_i x_j, t^2 x_i x_j \rangle_{\nu_i^+ + \nu_j^+}} \nonumber \\
&\cdot \prod_{1 \le i, j \le m}
\frac{\langle tq^{\mu_i^- + \mu_j^+} x_i x_j,
tq^{\nu_i^- + \nu_j^+} x_i x_j,
q^{\mu_i^- - \mu_j^+} x_i/tx_j\rangle
\langle  tx_i x_j \rangle_{\mu_i^- + \nu_j^+}
\langle  t^2x_i x_j \rangle_{\mu_i^+ + \nu_j^+}}
{\langle tx_i x_j, tx_i x_j, x_i/tq^{\mu_j^+}x_j\rangle
\langle  qx_i x_j \rangle_{\mu_i^-+ \nu_j^-}
\langle  tqx_i x_j \rangle_{\mu_i^+ + \nu_j^-}}\nonumber \\
&\cdot \prod_{1 \le i, j \le m}
\frac{\langle  x_i x_j, x_i/q^{\nu_j^-}x_j , x_i/tq^{M}x_j \rangle_{\mu_i^-}
\langle  tx_i x_j, x_i/q^{M}x_j  \rangle_{\mu_i^+}
\langle  qx_i/tq^{\nu_j^+}x_j , x_i/q^{\nu_j^+}x_j \rangle_{\nu_i^-}
}
{\langle  tq^{M+1}x_i x_j, qx_i/tq^{\nu_j^+}x_j, qx_i/ x_j \rangle_{\mu_i^-}
\langle  t^2 q^{M+1} x_i x_j, qx_i/ x_j  \rangle_{\mu_i^+}
\langle qx_i/tq^{\mu_j^+}x_j, qx_i/ x_j \rangle_{\nu_i^-}
} 
\nonumber \\
&\cdot 
\prod_{1 \le i, j \le m}
\frac{\langle  x_i/q^{\mu_j^+}x_j \rangle_{\nu_i^+}}
{\langle qx_i/ x_j \rangle_{\nu_i^+}}
\frac{T_{q, x}^{\nu}\Psi(x;y)}{\Psi(x;y)} \Biggr)\label{eq:gene of H_l}.
\end{align}
Since
\begin{equation}
e(u; t^m \widehat{\alpha})_{n} e(u; \sqrt{tq}/\alpha)_{mM-n}
= e(u; \sqrt{tq}/\alpha)_{mM} = H(u; \alpha t^{\rho_m}),
\end{equation}
the left-hand side of (\ref{eq:gene of H_l}) equals
\begin{align}
\Psi(x;y)^{-1}
\left(\sum_{l=0}^{mM} e(u;\sqrt{tq}/\alpha)_{mM-l} H_l^x \right)
\Psi(x;y). 
\end{align}
Hence  we have $H^x_l \Psi(x;y) = K_l^x \Psi(x;y)$ 
for each $l=0, 1,2, \ldots, mM$.
From the formula
(\ref{eq:expand by Koornwinder}) proved by Mimachi,
we obtain
$H^x_l P_\lambda(x) = K_l^x P_\lambda(x)$
for any partition $\lambda \subset (n^m)$.
Since $n$ is the arbitrary non-negative integer,
$H_l^x$ equals $K_l^x$ as a $q$-difference operator.
We see that the row type $q$-difference operators $H_l^x (l \le mM)$
correspond to the terms of the right-hand side of (\ref{eq:gene of H_l}) such that
$|\nu^+|-|\nu^-| =mM-l$.

We compute the explicit formula of $H_{l}^x=K_l^x$ for $l \le M$.
As we will see below their coefficients are expressed as rational functions
in $t^{\frac{1}{2}}$.
Note also that the operator $H_l^x$
does not depend on the non-negative integer $n$.
Since $H_l^x$ are the $W_m$-invariant operators,
it is enough to calculate the coefficients
$H^{(l)}_{\nu}(x):=H^{(l)}_{\nu}(x;a, b,c, d)$ of
$\prod_{1 \le i \le m} T^{\nu_i}_{q, x_i}\ (\nu \in \mathbb{N}^m, 0 \le |\nu| \le l)$.
The coefficients $H^{(l)}_{\nu}(x)$ have the following form:
\begin{align}
&H^{(l)}_{\nu}(x)
=\sum_{\substack{\nu \le \nu^{-} \\ |\nu| \le |\nu^-| \le l}}
\sum_{\substack{\nu^{-} \le \nu^+ \le(M^m) \\ |\nu^+|=mM-l+|\nu^-|}}
A^{(l)}_{\nu, \nu^-, \nu^+}(x). 
\end{align}
Relabeling the indices of summations,
we obtain the explicit formulas $(\ref{eq:explicit of H})$ of 
the coefficients $H^{(l)}_\nu (x)$
for $\nu \in \mathbb{N}^m$ such that $0 \le |\nu| \le l$.

Although we computed the explicit formula of $q$-difference operator $H^x_l$ in the case of $t= q^{-M}$,
for a fixed $l$ this expression with $t=q^{-M}$ is valid for any
$M=l, l+1, \ldots$.
Thus the explicit formula (\ref{eq:explicit of H}) is valid for any parameter $t$
and we complete the proof of Theorem \ref{Theo:explicit of H_l}.
\end{proof}

\subsection{Pieri formulas}
\label{subsec:Pieri}
It is known that
the Koornwinder polynomials have the duality property \cite{Diejen2,Sahi}:
\begin{equation}
\frac{P_\lambda(a t^{\rho_m}q^\mu ; a, b, c, d| q, t)}{P_\lambda(a t^{\rho_m}; a, b, c, d| q, t)}
 = \frac{P_\mu(\alpha t^{\rho_m}q^\lambda ; \alpha, \beta, \gamma, \delta| q, t)}
 {P_\mu(\alpha t^{\rho_m}; \alpha, \beta, \gamma, \delta| q, t)}, \label{eq:duality}
\end{equation}
where the parameters $\alpha, \beta, \gamma, \delta$
are defined by
\begin{equation}
\alpha =\sqrt{abcd/q}, \beta=ab/\alpha,
\gamma=ac/\alpha ,\delta=ad/\alpha.
\end{equation}
Van Diejen derived the Pieri formula of column type from the duality 
of the Koornwinder polynomials and $D_r$.
In this subsection, we present the ``Pieri formula of row type'' by using
the $q$-difference operators $H_l$.
For any partition $\mu=(\mu_1, \ldots, \mu_m)$,
we define $P_{\mu}^+$ by
\begin{equation}
P_{\mu}^+ = \{\lambda \in \mathbb{N}^m| \lambda_1 \ge \cdots 
\ge \lambda_m \ge 0,\ 
\lambda'_j-\mu'_j\in \{ \pm1, 0 \} \ (1 \le j \le m) \},
\end{equation}
where $\lambda'$ stands for the conjugate of a partition $\lambda$.
In other words, $P^+_\mu$
is the set of partitions
obtained by adding or subtracting at most one  to $\mu$ in each column.
By direct calculation, we obtain the following lemma.
\begin{lemm}\label{lemm:sub of pieri}
Let $\mu$ be a partition.
For any multi-index $\nu \in \mathbb{Z}^m$,
if $\mu + \nu= (\mu_1 + \nu_1 ,\ldots, \mu_m + \nu_m) \notin P^+_\mu$ ,
$H^{(l)}_{\nu}(at^{\rho_m}q^{\mu};a,b,c,d)=0$.
\end{lemm}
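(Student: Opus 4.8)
The plan is to substitute the Koornwinder point $x_i = a t^{m-i} q^{\mu_i}$ directly into the explicit coefficient $H^{(l)}_{\nu}(x)$ of (\ref{eq:explicit of H}) and to produce, for each way in which $\lambda := \mu+\nu$ can fail to lie in $P^+_\mu$, one numerator factor that specializes to zero. I would first treat the case $\nu \in \mathbb{N}^m$, to which the general case reduces through the sign convention $H^{(l)}_{\nu}(x;a,b,c,d) = H^{(l)}_{\nu'}(x_1^{\epsilon_1},\dots,x_m^{\epsilon_m};a,b,c,d)$ with $\nu_i = \epsilon_i \nu_i'$. When $\nu \in \mathbb{N}^m$ one has $\lambda \supseteq \mu$, the conjugate inequalities $\lambda'_j - \mu'_j \ge 0$ hold automatically, and membership $\lambda \in P^+_\mu$ reduces to $\lambda'_j - \mu'_j \le 1$ for all $j$, that is, to the horizontal-strip condition $\mu_{i-1} \ge \lambda_i$ for all $i \ge 2$. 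Hence a violation means $\lambda_i > \mu_{i-1}$ for some $i$; this also absorbs the subcase where $\lambda$ is not weakly decreasing, since $\lambda_i < \lambda_{i+1}$ with $\nu_i \ge 0$ forces $\lambda_{i+1} > \lambda_i \ge \mu_i$.

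The zero is supplied by a single nearest-neighbour factor. As $q,t$ are generic, a factor $\langle q^\alpha t^\beta\rangle$, and more generally a $q$-shifted-factorial product of such, vanishes only when an exponent pair is $(0,0)$. At the Koornwinder point $x_i/x_j = t^{\,j-i} q^{\mu_i-\mu_j}$, so the numerator factor $\langle t x_i/x_j\rangle_{\nu_i^+}$ of (\ref{eq:explicit of H}) carries the $t$-exponent $j-i+1$ and can degenerate only at $j=i-1$, where it equals $\prod_{s=0}^{\nu_i^+-1}\langle q^{\,s+\mu_i-\mu_{i-1}}\rangle$. This product is zero exactly when $\mu_{i-1}-\mu_i < \nu_i^+$, and since the internal summation in (\ref{eq:explicit of H}) runs over $\nu^+ \ge \nu$, the inequality $\mu_{i-1}-\mu_i < \nu_i \le \nu_i^+$ coming from the violation $\lambda_i > \mu_{i-1}$ makes this factor a common zero of every summand. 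The whole coefficient $H^{(l)}_\nu(a t^{\rho_m} q^\mu)$ therefore vanishes.

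It remains to be sure these numerator zeros are genuine, i.e. that no denominator of (\ref{eq:explicit of H}) degenerates simultaneously. The ratio-type denominators such as $\langle q x_i/x_j\rangle_{\nu_i}$ carry the nonzero $t$-exponent $j-i$ when $i \ne j$ and reduce to $\langle q\rangle_{\nu_i}\ne 0$ when $i=j$; the product-type denominators $\langle x_i x_j\rangle_{\ast}$ contain the free factor $a^2$ and so cannot equal $1$; and the weight factors $\langle a x_i, b x_i, c x_i, d x_i\rangle_{\ast}$ are generically nonzero. For general $\nu \in \mathbb{Z}^m$ I would invoke the sign reduction above: after flipping the signs $\epsilon_i$, the coefficient becomes $H^{(l)}_{\nu'}$ with $\nu' \in \mathbb{N}^m$ evaluated at the reflected point whose $i$-th coordinate is $(a t^{m-i} q^{\mu_i})^{\epsilon_i}$, and a downward violation $\mu_i > \lambda_{i-1}$ of the interlacing condition turns into an upward one there, the zero now coming from the $\nu^-$-type numerator factors, namely those built from $x_i^{-1} x_j^{-1}$ and from $\langle q^{\nu_j^+ +1} x_j/x_i, t q^{\nu_j^+} x_j/x_i\rangle_{\nu_i^-}$, which degenerate in the identical nearest-neighbour fashion.

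I expect the main obstacle to be the bookkeeping for general $\nu \in \mathbb{Z}^m$ rather than any single estimate: at the reflected evaluation point the ratios $x_i^{\epsilon_i}/x_j^{\epsilon_j}$ no longer all have the uniform form $t^{\,j-i} q^{\mu_i-\mu_j}$, so one must re-examine which nearest-neighbour factor survives as the common zero of the $\nu^\pm$-summation and verify, case by case across the product factors of (\ref{eq:explicit of H}), that the located zero is never cancelled by a coincident denominator. Once the interlacing reformulation of $P^+_\mu$ (for a partition $\lambda$, $\lambda \in P^+_\mu$ iff $\mu_{i-1}\ge\lambda_i$ and $\lambda_{i-1}\ge\mu_i$ for all $i$) is in hand, however, every violation is a single nearest-neighbour inequality matched to one explicit factor, and the argument is a finite, if lengthy, direct verification.
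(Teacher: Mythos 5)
Your strategy is the right one: the paper itself offers nothing beyond ``by direct calculation,'' and substituting $x=at^{\rho_m}q^{\mu}$ into (\ref{eq:explicit of H}) and exhibiting, for each violation of $\mu+\nu\in P^+_\mu$, a numerator factor that vanishes in \emph{every} summand is exactly that calculation. Your treatment of $\nu\in\mathbb{N}^m$ is correct and complete: an upward violation $\lambda_i>\mu_{i-1}$ gives $\nu_i^+\ge\nu_i>\mu_{i-1}-\mu_i$, the factor $\langle tx_i/x_{i-1}\rangle_{\nu_i^+}$ specializes to $\prod_{s=0}^{\nu_i^+-1}\langle q^{s+\mu_i-\mu_{i-1}}\rangle=0$, and your check that no denominator degenerates at the specialization point goes through.

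The gap is the general case $\nu\in\mathbb{Z}^m$, which is the bulk of the lemma (the Pieri rule must also kill terms that remove too many boxes), and there your sketch would fail as written, for three concrete reasons. First, you locate the zeros for downward violations in ``$\nu^-$-type'' factors such as $\langle q^{\nu_j^++1}x_j/x_i,tq^{\nu_j^+}x_j/x_i\rangle_{\nu_i^-}$ and the factors built from $x_i^{-1}x_j^{-1}$, whose lengths are $\nu_i^-$ or $\nu_i^-+\nu_j^-$; but $\nu^-$ is an \emph{inner} summation index with no lower bound tied to $\nu$ (only $0\le\nu^-\le(l^m)-\nu^+$ and $|\nu^-|=l-|\nu^+|$), so the double sum always contains summands with $\nu_i^-=0$ at the violating index, where these factors are empty products equal to $1$: a common zero must come from factors whose length is controlled by the constraint $\nu^+\ge\nu'$. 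Second, your up/down interlacing dichotomy, valid for partitions, misses violations once signs are mixed: for $\mu=(5,1)$, $\nu=(-3,3)$, $\lambda=(2,4)$ both interlacing inequalities $\lambda_2\le\mu_1$ and $\mu_2\le\lambda_1$ hold, yet $\lambda\notin P^+_\mu$ because it is not weakly decreasing, and your absorption of the non-monotone subcase used $\nu_i\ge0$. Third, the violation $\mu_m+\nu_m<0$ involves no adjacent pair at all and is not addressed. What actually works at the reflected point $y_k=(at^{m-k}q^{\mu_k})^{\epsilon_k}$ is the following: for same-sign adjacent pairs the ratio factors $\langle ty_i/y_j\rangle_{\nu_i^+}$ vanish exactly as in your positive case; for opposite-sign pairs $\epsilon_i=-1$, $\epsilon_{i+1}=+1$ every ratio factor picks up $a^{\pm2}$ and survives, and the zero comes instead from the product-type factor $\langle ty_iy_{i+1},qy_iy_{i+1}\rangle_{\nu_i^++\nu_{i+1}^+}$, since $ty_iy_{i+1}=q^{\mu_{i+1}-\mu_i}$ while $\nu_i^++\nu_{i+1}^+\ge\nu_i'+\nu_{i+1}'>\mu_i-\mu_{i+1}$ in all three violation types at such a pair (including the non-monotone example above); and the negativity violation is killed by $\langle ay_m\rangle_{\nu_m^+}=\langle q^{-\mu_m}\rangle_{\nu_m^+}$, which is precisely one of the weight factors you set aside as ``generically nonzero.'' With these identifications the direct verification does close up, but the factor bookkeeping you deferred is exactly where the proof lives, and the factors you named for it are not the ones that vanish.
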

For any partition $\mu$ , by substituting
$x = a t^{\rho_m}q^\mu $ in (\ref{eq:q-diff eq of H_l}),
we obtain
\begin{align}
\sum_{\substack{\nu \in \mathbb{Z}^m \\ 0 \le |\nu| \le l}}
H^{(l)}_{\nu}(at^{\rho_m}q^{\mu};a,b,c,d)
\frac{P_\lambda(at^{\rho_m}q^{\mu+ \nu})}{P_\lambda(a t^{\rho_m})}
=  h_l(\alpha t^{\rho_m}q^\lambda; \alpha|q, t) 
 \frac{P_\lambda(at^{\rho_m}q^{\mu})}{P_\lambda(a t^{\rho_m})}
. \label{eq:subs of q-diff of H}
\end{align}
From Lemma \ref{lemm:sub of pieri}, we can apply the duality of Koornwinder polynomials 
to (\ref{eq:subs of q-diff of H}) to obtain
\begin{align}
\sum_{\substack{\nu \in \mathbb{Z}^m \\ 0 \le |\nu| \le l
\\ \mu+\nu \in P^+_\mu}}
H^{(l)}_{\nu}(a t^{\rho_m}q^{\mu};a,b,c,d)
\frac{P_{\mu+ \nu}(\alpha t^{\rho_m} q^{\lambda};\alpha, \beta,\gamma, \delta|q,t )}
{P_{\mu+ \nu}(\alpha t^{\rho_m};\alpha, \beta,\gamma, \delta|q,t )} 
\nonumber \\
 =  h_l(\alpha t^{\rho_m} q^\lambda ; \alpha|q, t)
 \frac{P_\mu(\alpha t^{\rho_m} q^{\lambda};\alpha, \beta,\gamma, \delta|q,t )}
 {P_\mu(\alpha t^{\rho_m};\alpha, \beta,\gamma, \delta|q,t )}. 
\end{align}
Replacing $\alpha t^{\rho_m}q^{\lambda}$ and 
the parameters $(\alpha, \beta, \gamma, \delta)$
with $x$ and $(a, b, c, d)$, respectively, 
we obtain the following Pieri formula of row type.
\begin{theo}
For any non-negative integer $l=0, 1, 2, \ldots$, we have the Pieri formula of row type:  
\begin{equation}
h_l (x; a|q, t) \frac{P_\mu (x)}{P_\mu(a t^{\rho_m})} = 
\sum_{\substack{\nu \in \mathbb{Z}^m \\ 0 \le |\nu| \le l \\
\mu+\nu \in P^+_\mu}}
H^{(l)}_{\nu}(\alpha t^{\rho_m} q^{\mu};\alpha, \beta, \gamma, \delta)
\frac{P_{\mu+ \nu}(x)}{P_{\mu + \nu }(a t^{\rho_m})}.
\end{equation}
\end{theo}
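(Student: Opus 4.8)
The plan is to derive the Pieri formula directly from the $q$-difference equation (\ref{eq:q-diff eq of H_l}) satisfied by $H_l^x$, by specializing the variable $x$ at a spectral point and then invoking the duality of the Koornwinder polynomials. First I would insert the explicit expansion of $H_l^x$ from Theorem \ref{Theo:explicit of H_l}, so that the eigenvalue equation reads
\[
\sum_{\substack{\nu \in \mathbb{Z}^m \\ 0 \le |\nu| \le l}}
H^{(l)}_{\nu}(x;a,b,c,d)\,\prod_{i=1}^m T_{q,x_i}^{\nu_i} P_\lambda(x)
= P_\lambda(x)\, h_l(\alpha t^{\rho_m}q^\lambda;\alpha|q,t).
\]
Specializing $x = a t^{\rho_m}q^\mu$ for a partition $\mu$ and dividing by $P_\lambda(a t^{\rho_m})$ gives exactly (\ref{eq:subs of q-diff of H}); here the shift $\prod_i T_{q,x_i}^{\nu_i}$ moves the spectral point from $\mu$ to $\mu+\nu$, so each summand contributes the ratio $P_\lambda(a t^{\rho_m}q^{\mu+\nu})/P_\lambda(a t^{\rho_m})$.

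The key structural step is to cut the summation down to genuine partitions. By Lemma \ref{lemm:sub of pieri}, the coefficient $H^{(l)}_\nu(a t^{\rho_m}q^\mu;a,b,c,d)$ vanishes whenever $\mu+\nu \notin P^+_\mu$, so only the multi-indices $\nu$ with $\mu+\nu \in P^+_\mu$ survive. Since every element of $P^+_\mu$ is itself a partition, this is precisely the condition under which the duality (\ref{eq:duality}) may be applied term by term. I would then replace each ratio $P_\lambda(a t^{\rho_m}q^{\mu+\nu})/P_\lambda(a t^{\rho_m})$ on the left-hand side, as well as $P_\lambda(a t^{\rho_m}q^{\mu})/P_\lambda(a t^{\rho_m})$ on the right, by the dual ratios $P_{\mu+\nu}(\alpha t^{\rho_m}q^\lambda;\alpha,\beta,\gamma,\delta)/P_{\mu+\nu}(\alpha t^{\rho_m};\alpha,\beta,\gamma,\delta)$ and the analogous expression indexed by $\mu$.

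Finally I would exploit the fact that the parameter map $(a,b,c,d)\mapsto(\alpha,\beta,\gamma,\delta)$ is an involution: interchanging the names of the two dual parameter sets and writing $x=\alpha t^{\rho_m}q^\lambda$ turns $h_l(\alpha t^{\rho_m}q^\lambda;\alpha|q,t)$ into $h_l(x;a|q,t)$, turns the $\mu$- and $(\mu+\nu)$-ratios into $P_\mu(x)/P_\mu(at^{\rho_m})$ and $P_{\mu+\nu}(x)/P_{\mu+\nu}(at^{\rho_m})$, and carries the coefficient into $H^{(l)}_\nu(\alpha t^{\rho_m}q^\mu;\alpha,\beta,\gamma,\delta)$. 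As $\lambda$ ranges over all partitions the points $\alpha t^{\rho_m}q^\lambda$ are Zariski dense, so the identity, holding at infinitely many such $x$, extends to an identity of rational functions in the free variable $x$; this is exactly the asserted Pieri formula of row type.

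The main obstacle is conceptual rather than computational: one must ensure that the duality (\ref{eq:duality}) is only ever invoked with honest partition indices, and it is Lemma \ref{lemm:sub of pieri} that guarantees this by annihilating the non-partition terms before the duality is applied. A secondary point requiring care is the concluding density argument, which upgrades the equality valid only on the spectral lattice $\{\alpha t^{\rho_m}q^\lambda\}$ to an identity in the indeterminate $x$; this is legitimate because, after clearing the denominators $P_{\mu+\nu}(at^{\rho_m})$, both sides are rational (indeed Laurent-polynomial) in $x$ and agree on an infinite set.
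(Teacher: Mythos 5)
Your proposal follows essentially the same route as the paper's own proof: substitute $x=at^{\rho_m}q^\mu$ into the eigenvalue equation (\ref{eq:q-diff eq of H_l}), invoke Lemma \ref{lemm:sub of pieri} to kill the non-partition terms, apply the duality (\ref{eq:duality}) term by term, and relabel $(\alpha,\beta,\gamma,\delta)\mapsto(a,b,c,d)$, $\alpha t^{\rho_m}q^\lambda\mapsto x$ using the involutivity of the parameter map. The only difference is that you spell out the final interpolation/Zariski-density step that the paper leaves implicit in the word ``Replacing,'' which is a welcome clarification rather than a deviation.
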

\begin{center}
\textbf{Acknowledgments}
\end{center}
The author would like to express his thanks to 
Professors Masatoshi Noumi and Yasushi Komori for various advices.

\end{document}